\documentclass[11pt,reqno]{amsart}
\bibliographystyle{plain}
\usepackage{amsmath}
\usepackage{amsfonts}

\numberwithin{equation}{section}
\setcounter{secnumdepth}{2}
\setcounter{tocdepth}{2}

\setlength{\parskip}{1.5ex}

\usepackage{times}
\usepackage{amsmath,amsfonts,amstext,amssymb,amsbsy,amsopn,amsthm,eucal}
\usepackage{txfonts}
\usepackage{dsfont}
\usepackage{graphicx}   
\usepackage{hyperref}
\usepackage{accents}
\usepackage{enumerate}
\usepackage{xcolor}

\usepackage{verbatim}


\setlength{\textheight}{8.50in} \setlength{\textwidth}{6.5in}
\setlength{\columnsep}{0.5in} \setlength{\topmargin}{0.0in}
\setlength{\headheight}{0in} \setlength{\headsep}{0.5in}
\setlength{\parindent}{1pc}
\setlength{\oddsidemargin}{0in}  
\setlength{\evensidemargin}{0in}

\newcommand{\Lip}{\text{Lip}}

\newcommand{\Ric}{{\rm Ric}}

\newcommand{\Vol}{{\rm Vol}}

\newcommand{\Div}{{\rm div}}

\newcommand{\cH}{\mathcal{H}}

\newtheorem{theorem}{Theorem}[section]

\newtheorem{proposition}[theorem]{Proposition}
\newtheorem{lemma}[theorem]{Lemma}
\newtheorem{corollary}[theorem]{Corollary}
\theoremstyle{definition}
\newtheorem{definition}[theorem]{Definition}
\theoremstyle{remark}
\newtheorem{remark}{Remark}[section]
\theoremstyle{remark}

\theoremstyle{remark}

\theoremstyle{remark}
\theoremstyle{remark}
\DeclareMathOperator*\lowlim{\underline{lim}}
\DeclareMathOperator*\uplim{\overline{lim}}

\begin{document}

\title{Removable singularity of positive mass theorem with continuous metrics } 
\date{\today}

\author{Wenshuai Jiang}
\address{W. Jiang,~~School of mathematical Sciences,  Zhejiang University and School of Mathematics and Statistics, University of Sydney}
\email{wsjiang@zju.edu.cn}
\author{Weimin Sheng}
\address{W. Sheng,~~School of mathematical Sciences,  Zhejiang University}
\email{weimins@zju.edu.cn}
\author{Huaiyu Zhang}
\address{H. Zhang,~~School of mathematical Sciences,  Zhejiang University}
\email{huaiyuzhang@zju.edu.cn}
\thanks{The first author was supported by NSFC (No. 11701507, No. 12071425) and ARC DECRA190101471; The second author was supported by NSFC (No. 11971424, No. 12031017); The third author was supported by NSFC (No. 11971424)}

\begin{abstract}
	In this paper, we consider asymptotically flat Riemannnian manifolds $(M^n,g)$ with $C^0$ metric $g$ and $g$ is smooth away from a closed bounded subset $\Sigma$ and the scalar curvature $R_g\ge 0$ on $M\setminus \Sigma$. For given $n\le  p\le \infty$, if $g\in C^0\cap W^{1,p}$  and  the Hausdorff measure $\cH^{n-\frac{p}{p-1}}(\Sigma)<\infty$ when $n\le p<\infty$ or $\cH^{n-1}(\Sigma)=0$ when $p=\infty$, then we prove that the ADM mass of each end is nonnegative. Furthermore, if the ADM mass of some end is zero, then we prove that $(M^n,g)$ is isometric to the Euclidean space by showing the manifold has nonnegative Ricci curvature in RCD sense. This extends the result of \cite{Lee2015a} from spin to non-spin, also improves the result of Shi-Tam \cite{ShiTam} and Lee \cite{Lee13}.  Moreover, for $p=\infty$, this confirms a conjecture of Lee \cite{Lee13}.
	\end{abstract}
\maketitle
\tableofcontents


\section{Introduction}

One of the most famous results on manifolds with nonnegative scalar curvature is the positive mass theorem proved by Scheon and Yau \cite{Schoen1979,ScYa2,ScYa3}. They proved that the Arnowitt-Deser-Misner (ADM) mass of each end of an $n$-dimensional asymptotically flat manifold with nonnegative scalar curvature is nonnegative. Moreover, if the ADM mass of an end is zero, then the manifold is isometric to the Euclidean space. Later, this was used by Schoen \cite{Sc} to completely solve the Yamabe problem. If the manifold is spin, Witten \cite{Witten} proved the positive mass theorem by a different method (see also Parker and Taubes \cite{PaTa}, Bartnik \cite{Ba}). All the results are assumed to be smooth. It is natural to ask:

{\it If the manifold admits singularity in a subset $\Sigma$,  what is the conditions of $\Sigma$ such that the positive mass theorem still holds? }

The present paper is motivated by this question. It is necessary to assume that the metric is continuous. In \cite[Proposition 2.3]{ShiTam}, Shi and Tam has constructed an asymptotically flat metric with a cone singularity and nonnegative scalar curvature, but with negative ADM mass.

 Actually, there are many results on positive mass theorem for such problems with nonsmooth metrics by Miao \cite{Miao2002}, Shi-Tam \cite{ShiTam1}, \cite{ShiTam}, McFeron-Sz\'ekelyhidi \cite{McSz}, Lee-LeFloch \cite{Lee2015a}, and Li-Mantoulidis \cite{LiMa}.  Miao \cite{Miao2002} and  Shi-Tam \cite{ShiTam1} studied and proved a positive mass theorems with Lipschitz metric  and $\Sigma$ is a hypersurface satisfying certain conditions on the mean curvatures of the hypersurface $\Sigma$. \cite{ShiTam1} used this result to prove the positivity of the Brown-York quasilocal masss. McFeron and Sz\'ekelyhidi \cite{McSz} used Ricci flow giving a new proof of positivity of Miao's result and proved also the rigidity when the ADM mass is zero. Lee \cite{Lee13} considered a positive mass theorem for $(M^n,g)$ with bounded $W^{1,p}$-metric for $n<p\le \infty$ and are smooth away from a singular set $\Sigma$ with $\frac{n}{2}(1-n/p)$-dimensional Minkowski content vanishing. Lee's result was improved recently by Shi-Tam \cite{ShiTam}, where they showed that the singular set $\Sigma$ only requires  $(n-2)$-dimensional Minkowski content vanishing. If $(M^n,g)$ is spin, Lee and LeFloch \cite{Lee2015a} were able to prove a positive mass theorem for continuous metric with bounded $W^{1,n}$-norm, where the metric could be singular. Lee-LeFloch's theorem can be applied to all previous results for nonsmooth metrics under the additional assumption that the manifold is spin. If we assume further the information on the second derivative of metrics, the positive mass theorem was considered by Bartnik \cite{Ba} for metric with bounded $W^{2,p}$-norm with $p>n$.  Grant and Tassotti \cite{LiMa} considered continuous metric with bounded $W^{2,n/2}$-norm. Recently,  Li-Mantoulidis \cite{LiMa} considered bounded metric with skeleton singularities along a codimensional two submanifolds. Furthermore, it is very interesting that without further derivative  assumptions on metric Li-Mantoulidis \cite{LiMa} were able to prove a positive mass theorem in dimensional three with isolated singularity.

The main result of this paper can be considered as an extension of Shi-Tam \cite{ShiTam}, Lee \cite{Lee13} and Lee-LeFloch \cite{Lee2015a}. We improve and recover some of their results.  The main theorem we will prove is the following. Our proof would depend on the work of \cite{Miao2002,ShiTam,Lee13,LiMa} with some extension. 
\begin{theorem}\label{thm1.2}
	Let $M^n$ ($n\geq3$) be a smooth manifold. Let $g\in C^0\cap W^{1,p}_{loc}(M)$ ($n\le p\le \infty$) be a complete asymptotically flat metric on $M$. Assume $g$ is smooth away from a bounded closed subset $\Sigma$ with $\cH^{n-\frac{p}{p-1}}(\Sigma)<\infty$ if $n\le p<\infty$ or $\cH^{n-1}(\Sigma)=0$ if $p=\infty$, assume that $R_g\ge 0$ on $M\setminus \Sigma$. Then the ADM mass of $g$ of each end is nonnegative. Moreover, the ADM mass of one end is zero if and only if when $(M, g)$ is isometric to Euclidean space.
\end{theorem}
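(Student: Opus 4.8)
The plan is to reduce Theorem~\ref{thm1.2} to the smooth positive mass theorem of Schoen--Yau (now known in all dimensions $n\ge 3$) by a careful approximation, and then to treat the equality case synthetically. Write $p'=p/(p-1)$ for the H\"older conjugate exponent, so that the hypothesis reads $\cH^{n-p'}(\Sigma)<\infty$ when $n\le p<\infty$, and $\cH^{n-1}(\Sigma)=0$ when $p=\infty$ (the borderline value $n-1=n-p'|_{p=\infty}$, which is why the stronger condition is needed there). The first point is that this smallness makes $\Sigma$ removable for the Sobolev space $W^{1,p}$: one constructs Lipschitz cutoffs $\chi_\varepsilon$ with $0\le\chi_\varepsilon\le1$, $\chi_\varepsilon\equiv0$ on a neighborhood $U_\varepsilon\supset\Sigma$ with $\Vol_g(U_\varepsilon)\to0$, $\chi_\varepsilon\equiv1$ outside a slightly larger neighborhood, and $\|\nabla\chi_\varepsilon\|_{L^p(g)}\to0$ (respectively $\|\nabla\chi_\varepsilon\|_{L^1(g)}\to0$ when $p=\infty$). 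This is the crucial way the Hausdorff-measure hypothesis enters, and it is sharp: the exponent $n-p'$ is exactly the critical dimension for which $\Sigma$ has vanishing $W^{1,p}$-capacity.

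Next I would mollify. Smoothing $g$ in coordinate charts and patching by a partition of unity --- or, as in McFeron--Sz\'ekelyhidi \cite{McSz}, evolving by Ricci--DeTurck flow for a short time --- produces smooth asymptotically flat metrics $g_\varepsilon\to g$ in $C^0_{loc}\cap W^{1,p}_{loc}$ with $g_\varepsilon=g$ outside a fixed compact set, so that $m_{\mathrm{ADM}}(g_\varepsilon)\to m_{\mathrm{ADM}}(g)$; on compact subsets of $M\setminus\Sigma$ one has $R_{g_\varepsilon}\to R_g\ge0$, while on $U_\varepsilon$ the scalar curvature may blow up pointwise. To repair this, solve the conformal Laplacian equation $-\tfrac{4(n-1)}{n-2}\Delta_{g_\varepsilon}u_\varepsilon+R_{g_\varepsilon}u_\varepsilon=0$ on $M$ with $u_\varepsilon\to1$ at each end; using the cutoffs $\chi_\varepsilon$ together with the uniform $W^{1,p}$-bound on $g$, one shows that the negative part $R_{g_\varepsilon}^-$ --- although unbounded --- contributes only an $o(1)$ amount in the norm needed to solve this equation, so $u_\varepsilon>0$ exists and $u_\varepsilon\to1$. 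Then $\tilde g_\varepsilon:=u_\varepsilon^{4/(n-2)}g_\varepsilon$ is asymptotically flat with $R_{\tilde g_\varepsilon}\equiv0$, and $m_{\mathrm{ADM}}(\tilde g_\varepsilon)=m_{\mathrm{ADM}}(g_\varepsilon)+c_n\lim_{r\to\infty}\int_{S_r}\partial_\nu u_\varepsilon\,d\sigma$, where the flux term tends to $0$ as $\varepsilon\to0$. The smooth positive mass theorem gives $m_{\mathrm{ADM}}(\tilde g_\varepsilon)\ge0$, and letting $\varepsilon\to0$ yields $m_{\mathrm{ADM}}(g)\ge0$. \emph{The main obstacle is precisely this quantitative control of the conformal deformation near $\Sigma$}: proving that the possibly divergent negative scalar curvature produces only vanishing contributions to both the solvability of the equation and to the ADM mass, and this is where the relation between $p$ and $\dim_\cH\Sigma$ is used at its sharp threshold, via a capacity/Sobolev estimate on the dyadic annuli around $\Sigma$.

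Finally, for the rigidity, suppose $m_{\mathrm{ADM}}(g)=0$ for some end. Then in the construction above $m_{\mathrm{ADM}}(\tilde g_\varepsilon)\to0$ and $u_\varepsilon\to1$ strongly with decay, so the rigidity part of the smooth positive mass theorem forces $\tilde g_\varepsilon$, hence $g_\varepsilon$ on compact subsets of $M\setminus\Sigma$, to become asymptotically Ricci-flat; passing to the limit, $(M\setminus\Sigma,g)$ is smooth and Ricci-flat, in particular $\Ric\ge0$ there. I would then show that the metric measure space $(M,d_g,\mathrm{vol}_g)$ is an $\mathrm{RCD}(0,n)$ space: since $\Sigma$ has vanishing $W^{1,p}$-capacity it is negligible for the curvature-dimension condition, so the synthetic lower Ricci bound on $M\setminus\Sigma$ extends across $\Sigma$ (a removable-singularity statement for $\mathrm{RCD}$, using the cutoffs $\chi_\varepsilon$ once more). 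Since $(M,g)$ is asymptotically flat, $\mathrm{vol}_g(B_r(x))/(\omega_n r^n)\to1$ as $r\to\infty$; by the Bishop--Gromov inequality and its rigidity in $\mathrm{RCD}(0,n)$ spaces, $(M,d_g,\mathrm{vol}_g)$ is isometric to $(\RR^n,|\cdot|,\mathcal{L}^n)$, and because $g$ is continuous this is in fact a Riemannian isometry $(M,g)\cong(\RR^n,\delta)$. The reverse implication is trivial.
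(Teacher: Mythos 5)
Your approach to the \emph{nonnegativity} part is essentially the paper's: construct cutoffs $\chi_\varepsilon$ with $\|\nabla\chi_\varepsilon\|_{L^{p/(p-1)}}\to0$ (Lemma~\ref{l:cut-off}), mollify so that $g_\delta=g$ outside a fixed compact set, solve a conformal equation to kill the potentially negative scalar curvature, and pass to the limit. The one genuine variation is that you use the full conformal Laplacian $\Delta_{g_\delta}u_\delta-c_nR_{g_\delta}u_\delta=0$, whereas the paper (following Li--Mantoulidis) uses $\Delta_{g_\delta}u_\delta-c_n\varphi^2R_{g_\delta}u_\delta=0$ with $\varphi$ a compactly supported cutoff equal to $1$ on $K$. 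The cutoff $\varphi^2$ is not cosmetic: it makes the zeroth-order coefficient compactly supported (so that Lemma~\ref{lm6.1} applies directly), and it arranges that $\tilde R_{g_\delta}=u_\delta^{-4/(n-2)}(1-\varphi^2)R_{g_\delta}\ge0$ with equality exactly where $\varphi\equiv1$, giving the needed sign with less work. Your version is Miao's original one and is recoverable, but requires the stated extension of Lemma~\ref{lm6.1} to non-compactly-supported $f,h$ with decay, and a separate check that the solution stays positive.

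There are two genuine gaps in the \emph{rigidity} argument, both substantive. First, you assert that $m(\tilde g_\varepsilon)\to0$ together with $u_\varepsilon\to1$ forces $g_\varepsilon$ to become ``asymptotically Ricci-flat'' on $M\setminus\Sigma$, invoking the rigidity part of the smooth positive mass theorem. But that rigidity statement applies only to a single metric with mass \emph{exactly} zero; it has no quantitative/stability content that would let you conclude $\Ric_{g_\varepsilon}\to0$ from $m(\tilde g_\varepsilon)\to0$. (What stability is known -- e.g.\ intrinsic flat convergence results -- is far too weak for this.) The paper instead proves Ricci-flatness off $\Sigma$ by contradiction (Lemma~\ref{lm6.6}): assuming $\Ric_g(p)\neq0$, it perturbs $g$ in the direction $-t\psi\Ric_g$, runs the whole approximation/conformal machinery for the perturbed family $g_{\delta;t}$, and extracts a definite negative contribution $-Ct^2$ to $A_{\delta;t}$, contradicting $m(\tilde g_{\delta;t})\ge0$. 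Some argument of this variational type is needed.

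Second, extending the Ricci lower bound across $\Sigma$ in the RCD sense is \emph{not} a formal consequence of $\Sigma$ having vanishing $W^{1,p'}$-capacity. The paper must first establish Lipschitz/gradient estimates for harmonic (and heat-mollified) functions on $(M,g)$ and a weak Bochner inequality that sees across $\Sigma$ (Propositions~\ref{p:gradientdestimate_Deltabound} and \ref{p:weakbochnerinequaliyt}), and in the critical case $p=n$ the gradient estimate is obtained precisely \emph{by using} $\Ric_g\equiv0$ off $\Sigma$ in the Bochner formula and running Moser iteration -- the naive $W^{2,q}$ elliptic estimate plus Sobolev embedding gives boundedness of $|\nabla u|$ only when $p>n$ (Lemma~\ref{l:gradient_plargern}). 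So the RCD extension actually \emph{requires} the Ricci-flat conclusion you attempted to establish by the circular argument in the preceding step, and its proof in the borderline $p=n$ case is not a soft removability statement. Once these two steps are filled in, the final appeal to volume comparison and volume rigidity for $\mathrm{RCD}(0,n)$ spaces (De~Philippis--Gigli) matches the paper exactly.
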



\begin{remark}
	For the rigidity part, we will show such space has nonnegative Ricci curvature in RCD sense provided that the mass is zero. The rigidity would follow by the volume rigidity of nonnegative Ricci curvature 
\end{remark}
\begin{remark}
	The assumption of continuity of the metric is necessary, see Proposition 2.3 of \cite{ShiTam} for an example.
\end{remark}
\begin{remark}
	For the case $p=\infty$, the condition $\cH^{n-1}(\Sigma)=0$ is optimal, since one can construct counterexample if $\Sigma$ is a hypersurface. In particular, this confirms a conjecture of Lee \cite{Lee13}.
\end{remark}

\vskip 3mm
\noindent
\textbf{Organization:} 
In section 2, we will recall some basic results of asymptotically flat manifold. We will use smooth metric approximating the singular metric in $W^{1,p}$ sense and prove that the scalar curvature is closed in the integral sense (see Lemma \ref{lm2.2}). Furthermore, we will check in Lemma \ref{lm4.1} that the singular metric has nonnegative scalar curvature in distribution sense as introduced in \cite{Lee2015a}.

\noindent
In section 3, we will prove the mass is nonnegative. The proof is based on the idea from \cite{Miao2002} and our approximation Lemma \ref{lm2.2} and some uniform estimates for the conformal functions.

\noindent
In section 4, we will prove the rigidity part when mass is zero. First, based on the argument of \cite{ShiTam} we can show that the metric is Ricci flat away from the singular set $\Sigma$.  Next, based on a gradient estimate for harmonic function we will show that the space has nonnegative Ricci curvature in RCD sense. The rigidity result follows directly by the volume rigidity of nonnegative Ricci curvature. 

\section{Background}
In this section we will recall some definitions and study the smoothing approximation of metric. We will also prove the weak nonnegative scalar curvature under the same condition as Theorem \ref{thm1.2}. Let us give some fundamental definitions firstly.

\begin{definition}[asymptotically flat]\label{definition1}
Let $M$ be a smooth $n$-manifold, and $g$ be a $C^0$ metric on $M$. We say that $(M,g)$ is asymptotically flat if there exists a compact subset $K$ on $M$, such that $g$ is $C^2$ on $M\backslash K$, $M\backslash K$ has finite many components, says $\Sigma_l$, $l=1,\dots,p$, and for each component $\Sigma_l$ there exists a smooth diffeomorphism $\Phi_l$ from it to $\mathbb{R}^n$ minus a ball, such that if we see $\Phi_l$ as a coordinate system on $\Sigma_l$, then \[g_{ij}-\delta_{ij}=O(|x|^{-\delta})\] \[g_{ij,k}=O(|x|^{-\delta-1})\] \[g_{ij,kl}=O(|x|^{-\delta-2}),\] where $\delta$ is some constant greater than $(n-2)/2$, and the commas denote partial derivatives in the coordinate system. We also call each components $\Sigma_l$ as an end of $M$.
\end{definition}
\begin{definition}[ADM mass]Given a asymptotically flat manifold $(M,g)$, we define the ADM mass of each end $\Sigma_l$ as the limit \[\lim\limits_{r \to \infty }\frac{1}{2(n-1)}\omega_{n-1}\int_{S_r}\sum_{i,j=1}^n(g_{ij,i}-g_{ii,j})\nu^jd\mu,\] 
where $S_r$ is the coordinate sphere in $(\Sigma_l,\Phi_l)$ of radius $r$, $\nu$ is the unit outward normal vector of $S_r$, $\omega_{n-1}$ is the volume of the unit $(n-1)$ dimensional sphere and $d\mu$ is the volume form of $S_r$ in Euclidean metric. In a given end $\Sigma_l$, we will denote $r=\Phi_l^*\left(\sqrt{\Sigma_{i=1}^n (x^i)^2}\right)$ in this paper.
\end{definition}
Bartnik \cite{Ba} proved that this limit exists provided that the scalar curvature of $g$ is integrable on M  and it is a geometric invariant. We can denote the mass of the end $\Sigma_i$ by $m(\Sigma_i,g)$. In this article, we only need to check one end of M, and then the case of other ends holds in the same way, so we just choose an arbitrary end and denote the mass by $m(g)$ for simplicity. 

In this paper, we will fix a smooth background metric $h$ on $M$ such that $C^{-1}h\le g\le Ch$ for some constant $C>1$, and $h=g$ outside some compact subset of $M$. All the convergencs of functions or tensors are taken with respect to $h$. And we will let $\tilde\nabla$ denote the covariant derivative taken with respect to $h$.

\begin{definition}
Let $M^n$ be a smooth manifold and $g$ be a $C^0\cap W^{1,p}_{\rm{loc}}$ metric on $M$ and $h$ is a fixed smooth metric as above, for a family of smooth function $f_\delta$, we say that $f_\delta$ converge to a function $f$ locally in $W^{1,p}$-norm, if for any $\epsilon>0$ and for any $0<r<1$, there exists some $\delta_0>0$, such that for any $\delta\in(0,\delta_0),x\in M$, we have
\begin{align*}
\int_{B_r(x)}| f_\delta-  f|^p d\mu_h<\epsilon, \int_{B_r(x)}|\tilde\nabla f_\delta-\tilde\nabla f|^p d\mu_h<\epsilon,
\end{align*}
here and below the norm are taken with respect to $h$.

We say that $f_\delta$ converge to a function $f$ locally in $C^{0}$-norm, if for any $\epsilon>0$ and for any $r>0$, there exists some $\delta_0>0$, such that for any $\delta\in(0,\delta_0),x\in M$, we have
\begin{align*}
\sup_{B_r(x)}|f_\delta-f|d\mu_h<\epsilon,
\end{align*}

For a family of smooth tensors $T_\delta$, $\delta>0$, we say that $T_\delta$ converge to $T$ locally in $W^{1,p}$-norm or in $C^0$-norm, if $T_\delta$ and $T$ are of the same type, and for each chart on $M$, the component functions $(T_\delta)_{ijk\ldots}^{abc\ldots}$converge to $T_{ijk\ldots}^{abc\ldots}$ respectively locally in $W^{1,p}$-norm or in $C^0$-norm .
\end{definition}

\begin{definition}
Let $(M^n,g)$ be $C^0\cap W^{1,n}_{loc}$ metric and $h$ is a fixed smooth metric as above. We can define the scalar curvature distribution as in \cite{Lee2015a}
\begin{align}
	\langle R_g,\varphi\rangle :=\int_M \left(-V\cdot \tilde\nabla \left(\varphi \frac{d\mu_g}{d\mu_h}\right)+F\varphi \frac{d\mu_g}{d\mu_h}\right)d\mu_h
\end{align}
for any compactly supported $\varphi\in W^{1,n/(n-1)}$,
where the dot product are taken with respect to $h$, $V$ is a vector field and $F$ is a scalar field,
\begin{align}
\Gamma_{ij}^k&:=\frac{1}{2}g^{kl}\left(\tilde\nabla_i g_{jl}+\tilde\nabla_j g_{il}-\tilde\nabla_l g_{ij}\right), \\
	V^k&:=g^{ij}\Gamma_{ij}^k-g^{ik}\Gamma_{ji}^j=g^{ij}g^{k\ell}(\tilde\nabla_jg_{i\ell}-\tilde\nabla_{\ell}g_{ij}),\\
	F&:=\bar R-\tilde \nabla_kg^{ij}\Gamma_{ij}^k+\tilde\nabla_kg^{ik}\Gamma_{ji}^i+g^{ij}\left(\Gamma_{k\ell}^k\Gamma_{ij}^\ell-\Gamma_{j\ell}^k\Gamma_{ik}^\ell\right),
\end{align}
and $\mu_h$ is the Lebesgue measure with respect to $h$. By \cite{Lee2015a}, $\langle R_g,\varphi\rangle$ is independent of $h$, and it coincides with the integral $\int_M R_g \varphi d\mu_h$ when $g$ is $C^2$ and $R_g$ is defined in classical sense. We say that $g$ has weakly nonnegative scalar curvature if for any nonnegative test function $\varphi$, we have $\langle R_g,\varphi\rangle\ge 0$.
\end{definition}

\subsection{Smoothing the metric}
The following mollification lemma could be found in \cite[Lemma 4.1]{grant2014positive}, though their lemma is a $W^{2,\frac{n}{2}}$ version, our version could be proved in the same manner.
\begin{lemma}[\cite{grant2014positive}]\label{lm2.1}
Let $M^n$ be a smooth manifold and $g$ be a $C^0\cap W^{1,n}_{\rm{loc}}$ metric on $M$, then there exists a family of smooth metric $g_\delta$, $\delta>0$, such that $g_\delta$ converge to $g$ locally both in $C^0$-norm and in $W^{1,n}$ norm. Moreover, if $g$ is smooth away from a compact subset, then we can take $g_\delta$ such that $g_\delta$ coincide with $g$ outside some compact set $K$ independent of $\delta$.  
\end{lemma}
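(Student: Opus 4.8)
The plan is to establish the mollification Lemma \ref{lm2.1} by working chart by chart and patching with a partition of unity. First I would fix a locally finite atlas $\{(U_\alpha,\phi_\alpha)\}$ of $M$ together with a subordinate smooth partition of unity $\{\chi_\alpha\}$, chosen so that on each $U_\alpha$ the metric components $g_{ij}\circ\phi_\alpha^{-1}$ are $C^0\cap W^{1,n}_{\rm loc}$ functions on an open subset of $\RR^n$ and so that the closures of the supports behave well under shrinking. The idea is to convolve these coordinate components with a standard Friedrichs mollifier $\rho_\eta$ in each chart, obtaining for each parameter $\eta>0$ smooth symmetric $2$-tensor fields defined on a slightly smaller chart domain, and then to glue: define $g_\delta := \sum_\alpha \chi_\alpha\,(\text{mollified } g \text{ in chart } \alpha)$ with $\eta = \eta_\alpha(\delta)$ chosen small enough (depending on $\alpha$) that the resulting sum is within $\delta$ of $g$ in the relevant local norms. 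Because the sum is locally finite, only finitely many terms are involved near any point, so this is well-defined and smooth.

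The key analytic input is the standard fact that if $u\in C^0\cap W^{1,n}_{\rm loc}$ on an open set in $\RR^n$, then $\rho_\eta * u \to u$ as $\eta\to 0$ uniformly on compact subsets and in $W^{1,n}_{\rm loc}$; this is elementary (uniform continuity on compacta gives the $C^0$ convergence, and $\nabla(\rho_\eta*u)=\rho_\eta*\nabla u \to \nabla u$ in $L^n_{\rm loc}$ gives the Sobolev convergence). One then has to check that these local convergences survive the partition-of-unity patching: since $\chi_\alpha$ and $\tilde\nabla\chi_\alpha$ are smooth and bounded on compacta, $\chi_\alpha\,(\rho_\eta*g - g)$ and its derivative go to zero locally in the required norms, and summing the finitely many relevant $\alpha$'s near a given ball preserves this. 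Converting between the coordinate $W^{1,n}$ norm and the $h$-norm used in the paper's definition of local $W^{1,n}$ convergence is routine because $h$ is smooth and the atlas is fixed, so the two notions are comparable on compacta. Also one needs positive-definiteness of $g_\delta$ for small $\delta$: since $g$ is continuous and the mollifications converge uniformly on compacta, $g_\delta$ stays uniformly close to $g$ and hence remains a genuine metric, and by shrinking $\delta$ one keeps the bound $C^{-1}h\le g_\delta\le C'h$ on compacta.

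For the last assertion — that when $g$ is already smooth outside a compact set we may arrange $g_\delta = g$ off a fixed compact $K$ — I would modify the construction by introducing a fixed cutoff: pick compact sets $K\subset\subset K'$ such that $g$ is smooth on $M\setminus K$, and replace $g_\delta$ by $\psi g_\delta + (1-\psi) g$ where $\psi$ is a fixed smooth function equal to $1$ on a neighborhood of $K$ and supported in $K'$. This leaves $g$ untouched outside $K'$, is smooth everywhere (on the overlap region $g$ itself is smooth, and on $K$ we use the mollified version), and still converges to $g$ in the local norms since $\psi(g_\delta-g)\to 0$ locally; take $K$ in the statement to be $K'$.

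The main obstacle I expect is bookkeeping rather than a deep difficulty: one must choose the chart-dependent mollification scales $\eta_\alpha(\delta)$ compatibly so that, uniformly over $x\in M$ and over balls $B_r(x)$ with $0<r<1$, the finitely many patched pieces each contribute less than a prescribed fraction of $\epsilon$ — this requires care because the number of charts meeting $B_r(x)$, though finite, is not uniformly bounded over all of $M$ unless one uses the locally-finite structure carefully (e.g.\ work with a fixed exhaustion by compact sets and diagonalize). One also has to be slightly attentive that mollification in overlapping charts interacts correctly with the partition of unity, but since we are only summing $\chi_\alpha$ times (mollified $g$) and not mollifying $\chi_\alpha g$, the transition-function nonlinearities never enter the error estimate: each error term is $\chi_\alpha(\rho_{\eta_\alpha}*g_\alpha - g_\alpha)$ in a single coordinate system, and these are controlled directly. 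So the proof is a routine adaptation of \cite[Lemma 4.1]{grant2014positive}, with $W^{1,n}$ in place of $W^{2,n/2}$.
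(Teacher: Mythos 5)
Your proposal is correct and is essentially the same partition-of-unity plus Friedrichs-mollifier argument that the paper relies on: in fact the paper does not prove Lemma~\ref{lm2.1} at all, it simply cites \cite[Lemma 4.1]{grant2014positive} and remarks that the $W^{1,n}$ version "could be proved in the same manner," which is exactly the construction you describe. Your treatment of the fixed cutoff $\psi$ for the "coincide with $g$ outside $K$" clause, and your observation that the first-derivative error from the partition of unity is controlled because $\sum_\alpha \tilde\nabla\chi_\alpha = 0$ turns $\sum_\alpha(\tilde\nabla\chi_\alpha)T_\alpha$ into $\sum_\alpha(\tilde\nabla\chi_\alpha)(T_\alpha-g)$, are both sound; the only caveat you yourself flag — the uniformity over $x\in M$ demanded by the paper's definition of local convergence on a noncompact manifold — is real in general but moot in the case actually used, where $g_\delta\equiv g$ outside a compact set.
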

\begin{remark}
Since we take $g_\delta$ such that $g_\delta$ coincide with $g$ outside some compact set $K$ independent of $\delta$, by the definition of ADM mass we have that $m(g_\delta)=m(g)$.
\end{remark}
For this mollification, the scalar curvature distribution has an approximation. Concretely, we have the following lemma
\begin{lemma}\label{lm2.2}
	Let $M^n$ be a smooth manifold and $g$ be a $C^0\cap W^{1,n}_{\rm{loc}}$ metric on $M$. Suppose that the $L^2$ Sobolev constant of $(M,g)$ has an upper bound $C_s$. Let $g_\delta$ be the mollification in Lemma \ref{lm2.1}. Suppose that $g_\delta$ coincide with $g$ outside some compact set $K$.  Then  
\[|\langle R_{g_\delta},u^2\rangle-\langle R_g,u^2\rangle|\leq \Psi(\delta)\int_M|\nabla u|^2d\mu_{g },\forall u\in C_0^\infty(M),\]
where $R_{g_\delta}$ is scalar curvature of $g_\delta$ and $\lim_{\delta\to 0}\Psi(\delta)=0$. Here $\Psi(\delta)$ depends only on the Sobolev constant $C_s$ and the $W^{1,n}$-norm of $|g-g_\delta|$. 
\end{lemma}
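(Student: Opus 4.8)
The plan is to compute the difference $\langle R_{g_\delta},u^2\rangle-\langle R_g,u^2\rangle$ directly from the distributional formula, expand it into the terms coming from $V$ and $F$, and estimate each piece using the $W^{1,n}$-convergence of $g_\delta\to g$ together with the Sobolev inequality to absorb the test function into $\int_M|\nabla u|^2\,d\mu_g$. First I would write, using the definition of the scalar curvature distribution,
\begin{align*}
\langle R_{g_\delta},u^2\rangle-\langle R_g,u^2\rangle
&=\int_M\Big(-(V_\delta-V)\cdot\tilde\nabla\big(u^2\tfrac{d\mu_{g_\delta}}{d\mu_h}\big)-V\cdot\tilde\nabla\big(u^2(\tfrac{d\mu_{g_\delta}}{d\mu_h}-\tfrac{d\mu_g}{d\mu_h})\big)\Big)d\mu_h\\
&\quad+\int_M\Big(F_\delta\,u^2\tfrac{d\mu_{g_\delta}}{d\mu_h}-F\,u^2\tfrac{d\mu_g}{d\mu_h}\Big)d\mu_h,
\end{align*}
and then expand $\tilde\nabla(u^2\,\phi)=2u\,\phi\,\tilde\nabla u+u^2\,\tilde\nabla\phi$. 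The terms in which the derivative hits $u$ produce a factor $2u\,|\tilde\nabla u|$; by Cauchy--Schwarz these are bounded by $\big(\int|\cdot|^2 u^2\big)^{1/2}\big(\int|\tilde\nabla u|^2\big)^{1/2}$, and the coefficient $\int|\cdot|^2 u^2$ is controlled by Hölder with exponents $n/2$ and $n/(n-2)$, the first factor being an $L^{n/2}$-norm of a quadratic expression in $\tilde\nabla(g_\delta-g)$ (hence an $L^n$-type smallness in $g-g_\delta$), and the second factor $\|u\|_{L^{2n/(n-2)}}^2$ being dominated by $C_s\int|\nabla u|^2\,d\mu_g$ via the Sobolev inequality. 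Here one uses that $g_\delta$ and $g$ are uniformly equivalent to $h$, so that $d\mu_{g_\delta}/d\mu_h$, $d\mu_g/d\mu_h$ and their inverses are bounded, and that $g$ itself is globally $C^0$-close to $h$ (hence has bounded $\Gamma$, $V$, $F$ in the fixed compact set $K$ and vanishing outside it).

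The terms in which the derivative hits $\phi=u^2\,d\mu/d\mu_h$ must be handled more carefully, since $\tilde\nabla V_\delta$ or $\tilde\nabla(d\mu_{g_\delta}/d\mu_h)$ is not controlled in $W^{1,n}$. The point is that $V$ and $V_\delta$ are first-order in the metric, so after expanding, every such term is schematically of the form $\int (\text{bounded})\cdot(\partial(g_\delta-g))\cdot\big(\partial(g_\delta)\ \text{or}\ \partial g\big)\cdot u^2$ or $\int(\text{bounded})\cdot\partial(g_\delta-g)\cdot u\,\tilde\nabla u$ — that is, the ``bad'' second-derivative terms cancel between $V_\delta$ and $V$ up to a factor that is itself a first derivative of $g_\delta-g$. (This is exactly the algebraic structure already exploited in \cite{Lee2015a}: the distribution $\langle R_g,\varphi\rangle$ is designed so that only first derivatives of the metric appear.) For the $F$ terms the situation is easier still, because $F$ contains no derivatives beyond $\bar R$ (which is the same for $g$ and $g_\delta$ outside $K$ and bounded inside, and in fact $\bar R$ is the background scalar curvature which is $\delta$-independent) and terms quadratic in $\Gamma$ plus $\tilde\nabla g\cdot\Gamma$; the difference $F_\delta-F$ is then a sum of products of a bounded factor, a factor of $\partial(g_\delta-g)$ or $(g_\delta-g)$, and a factor bounded in $L^n$, so Hölder ($n/2$ and $n/(n-2)$ again, or $L^n\cdot L^n\cdot L^{n/(n-2)}$ for the worst quadratic piece) plus Sobolev gives the bound $\Psi(\delta)\int|\nabla u|^2\,d\mu_g$ with $\Psi(\delta)\to 0$.

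The main obstacle, and the step I would spend the most care on, is the bookkeeping of the first group of ``derivative-on-$\phi$'' terms: one must verify that after subtracting $V$ from $V_\delta$ the genuinely second-order expression $\int V_\delta\cdot\tilde\nabla(u^2\,d\mu_{g_\delta}/d\mu_h)$ reorganizes (via an integration by parts back, or by algebraically tracking the $g^{ij}g^{k\ell}\tilde\nabla(\tilde\nabla g)$ terms) into something that is uniformly a product of at most two first derivatives of metrics, one of which is a derivative of $g_\delta-g$ and therefore small in $L^n$. Once that algebraic identity is in hand, all estimates are routine Hölder-plus-Sobolev applications, using that $\|g_\delta-g\|_{C^0}+\|\tilde\nabla(g_\delta-g)\|_{L^n}\to 0$ locally and that everything is supported in the fixed compact set $K$ on which the metrics are uniformly equivalent, so the local convergence is in fact uniform over the region that matters. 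Collecting constants, $\Psi(\delta)$ is a fixed polynomial in $C_s$, $\sup_K|g|_h$ and $\|g-g_\delta\|_{W^{1,n}(K)}$ that tends to $0$ as $\delta\to 0$, which is the claimed dependence.
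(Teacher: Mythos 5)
Your proposal follows the same route as the paper: the identical triangle--inequality split
\[
\langle R_{g_\delta},u^2\rangle-\langle R_g,u^2\rangle
=\underbrace{\text{(terms with }V_\delta-V\text{ or }F_\delta-F)}_{\text{small in }L^n\text{ or }L^{n/2}}
+\underbrace{\text{(terms with }V\text{ or }F\text{ times }d\mu_{g_\delta}/d\mu_h-d\mu_g/d\mu_h)}_{\text{small in }C^0\cap W^{1,n}},
\]
after which every piece is handled by H\"older (with exponents $n,\,n/(n-1)$ or $n/2,\,n/(n-2)$) followed by the $L^2$-Sobolev inequality to produce the factor $C_s\int|\nabla u|^2\,d\mu_g$. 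That is exactly the paper's argument, and it closes.

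One point in your middle paragraph deserves correction, because it both overcomplicates the proof and suggests a misreading of the distributional formula. There is no ``genuinely second-order expression'' anywhere in $\langle R_{g_\delta},u^2\rangle-\langle R_g,u^2\rangle$, and no cancellation of second-derivative terms between $V_\delta$ and $V$ has to be verified. The Lee--LeFloch distribution is built so that $V$ and $F$ contain only first derivatives of the metric, and the covariant derivative in $-V\cdot\tilde\nabla\big(\varphi\,d\mu_g/d\mu_h\big)$ never hits $V$: no integration by parts is performed, so $\tilde\nabla V_\delta$ simply does not appear. When the derivative falls on the volume ratio, terms such as $\int (V_\delta-V)\,u^2\,\tilde\nabla\big(d\mu_{g_\delta}/d\mu_h\big)$ are products of two first derivatives of metrics with $u^2$, estimated directly by H\"older $L^n\cdot L^n\cdot L^{n/(n-2)}$ plus Sobolev, with smallness coming from $\|V_\delta-V\|_{L^n}\to 0$ (and $\|\tilde\nabla(d\mu_{g_\delta}/d\mu_h)\|_{L^n}$ uniformly bounded). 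So the ``main obstacle'' you describe, and the algebraic identity you say must be established before the rest goes through, are phantom: once you drop that paragraph your argument coincides with the paper's.
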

\begin{remark}
The $L^2$ Sobolev constant condition means that for any $u\in C_0^\infty(M)$, we have
\[\left(\int_M u^{\frac{2n}{n-2}}d\mu_g\right)^{\frac{n-2}{n}}\leq C_s \int_M|\nabla u|^2d\mu_g\]
holds. This always holds for asymptotically flat manifolds. 
\end{remark}
\begin{proof}
Let $h$ be a smooth metric on $M$ with $C^{-1}h<g<Ch$, here and below $C$ will denote some positive constant might depend on $n$ and $C_s$, but independent of $\delta$ and can vary from line to line. Then the Sobolev inequality also holds for metric $h$ since $h$ and $g$ are equivalent. Let $V_\delta$ and $F_\delta$ be the vector field and scalar field  in the definition of scalar curvature distribution of $g_\delta$. Then we have 
\begin{align}
	\lim_{\delta\to0^{+}}\left(\int_M|V^k_\delta-V^k|^nd\mu_h+\int_M|F_\delta-F|^{n/2}d\mu_h\right)=0,
\end{align}	
where the integrals are taken only in a compact subset due to the fact that $g=g_\delta$ away from a compact subset. 
Recall that for any $\varphi\in C_0^\infty$
\begin{align}
	\langle R_g,\varphi\rangle &=\int_M \left(-V\cdot \tilde\nabla \left(\varphi \frac{d\mu_g}{d\mu_h}\right)+F\varphi \frac{d\mu_g}{d\mu_h}\right)d\mu_h\\
	\langle R_{g_\delta},\varphi\rangle &=\int_M \left(-V_\delta\cdot \tilde\nabla \left(\varphi \frac{d\mu_{g_\delta}}{d\mu_h}\right)+F_\delta\varphi \frac{d\mu_{g_\delta}}{d\mu_h}\right)d\mu_h
\end{align}

Noting that $g_\delta$ coincides with $g$ away from a compact subset, the formula $\langle R_g,\varphi\rangle-\langle R_{g_\delta},\varphi\rangle$ only depends on the information in a compact subset. 
For the term involving $F_\delta$, by H\"older inequality and Sobolev inequality, we can estimate 
\begin{align}
	&\Big|\int_M F_\delta u^2 \frac{d\mu_{g_\delta}}{d\mu_h}d\mu_h-\int_M Fu^2\frac{d\mu_{g}}{d\mu_h}d\mu_h\Big|\\
	&\le \int_M \Big| F_\delta u^2\frac{d\mu_{g_\delta}}{d\mu_h}-Fu^2\frac{d\mu_{g}}{d\mu_h}\Big|d\mu_h\\
	&\le \int_M|F_\delta u^2-Fu^2|\frac{d\mu_{g_\delta}}{d\mu_h} d\mu_h+\int_M |F|u^2\Big| \frac{d\mu_{g_\delta}}{d\mu_h}-\frac{d\mu_{g}}{d\mu_h}\Big|d\mu_h\\
	&\le C \int_M|F_\delta u^2-Fu^2| d\mu_h+\sup_M\Big| \frac{d\mu_{g_\delta}}{d\mu_h}-\frac{d\mu_{g}}{d\mu_h}\Big| \int_M |F|u^2d\mu_h\\
&\le C\left(\int_M |F_\delta-F|^{n/2}d\mu_h \right)^{2/n}\left(\int_Mu^{2n/(n-2)}d\mu_h\right)^{(n-2)/n}+\sup_M\Big| \frac{d\mu_{g_\delta}}{d\mu_h}-\frac{d\mu_{g}}{d\mu_h}\Big| \left(\int_M |F|^{n/2}d\mu_h\right)^{2/n}\left(\int_M |u|^{2n/(n-2)}d\mu_h\right)^{(n-2)/n}\\
&\le \left(C\left(\int_M |F_\delta-F|^{n/2}d\mu_h \right)^{2/n}+C\sup_M\Big| \frac{d\mu_{g_\delta}}{d\mu_h}-\frac{d\mu_{g}}{d\mu_h}\Big| \left(\int_M |F|^{n/2}d\mu_h\right)^{2/n}\right) \int_M |\tilde\nabla u|^{2}d\mu_h \le \Psi(\delta) \int_M |\tilde\nabla u|^{2}d\mu_h, 
\end{align}	
where $\lim_{\delta\to 0}\Psi(\delta)=0$.
	For the term involving $V^k$, we can estimate 
	\begin{align}
		\Big|\int_M& V\cdot \tilde\nabla \left(u^2\frac{d\mu_g}{d\mu_h}\right)d\mu_h-\int_M V_\delta\cdot \tilde\nabla \left(u^2\frac{d\mu_{g_\delta}}{d\mu_h}\right)d\mu_h\Big|\\
		\le & \int_M |V-V_\delta|\cdot\Big|\tilde\nabla \left(u^2\frac{d\mu_{g_\delta}}{d\mu_h}\right)\Big|d\mu_h+\int_M |V|\cdot\Big|\tilde\nabla \left(u^2\frac{d\mu_{g}}{d\mu_h}-u^2\frac{d\mu_{g_\delta}}{d\mu_h}\right)\Big|d\mu_h\\
		\le & \left(\int_M|V-V_\delta|^nd\mu_h\right)^{1/n}\left(\int_M\Big|\tilde\nabla \left(u^2\frac{d\mu_{g_\delta}}{d\mu_h}\right)\Big|^{n/(n-1)}d\mu_h\right)^{(n-1)/n}\\
		&+ \left(\int_M|V|^nd\mu_h\right)^{1/n}\left(\int_M\Big|\tilde\nabla \left(u^2\frac{d\mu_{g_\delta}}{d\mu_h}-u^2\frac{d\mu_{g}}{d\mu_h}\right)\Big|^{n/(n-1)}d\mu_h \right)^{(n-1)/n}.
	\end{align}
	Notice that 
	\begin{align}
		\int_M&\Big|\tilde\nabla \left(u^2\frac{d\mu_{g_\delta}}{d\mu_h}\right)\Big|^{n/(n-1)}d\mu_h\\
		\le &C(n)\int_M\left( |\tilde\nabla u|\cdot |u|\frac{d\mu_{g_\delta}}{d\mu_h}\right)^{n/(n-1)}d\mu_h+C(n)\int_M\left( u^2|\tilde\nabla \frac{d\mu_{g_\delta}}{d\mu_h}|\right)^{n/(n-1)}d\mu_h\\
		\le & C\int_M |\tilde\nabla u|^{n/(n-1)}|u|^{n/(n-1)}d\mu_h+C(n)\left(\int_M u^{2n/(n-2)}d\mu_h\right)^{(n-2)/(n-1)}\left(\int_M|\tilde\nabla \frac{d\mu_{g_\delta}}{d\mu_h}|^nd\mu_h\right)^{1/(n-1)}\\
		\le &C\left(\int_M |\tilde\nabla u|^{2}d\mu_h\right)^{\frac{n}{2(n-1)}}\left(\int_M|u|^{2n/(n-2)}d\mu_h\right)^{\frac{n-2}{2(n-1)}}+C(n)\left(\int_M u^{2n/(n-2)}d\mu_h\right)^{(n-2)/(n-1)}\left(\int_M|\tilde\nabla \frac{d\mu_{g_\delta}}{d\mu_h}|^nd\mu_h\right)^{1/(n-1)}\\
		\le &C\left(1+\int_M|\tilde\nabla \frac{d\mu_{g_\delta}}{d\mu_h}|^nd\mu_h\right)^{1/(n-1)}\left(\int_M |\tilde\nabla u|^{2}d\mu_h\right)^{\frac{n}{(n-1)}}
	\end{align}
	and similarly, 
	\begin{align}
	\int_M&\Big|\tilde\nabla \left(u^2\frac{d\mu_{g_\delta}}{d\mu_h}-u^2\frac{d\mu_{g}}{d\mu_h}\right)\Big|^{n/(n-1)}d\mu_h\\
	\le & C\sup_M 	\Big|\frac{d\mu_{g_\delta}}{d\mu_h}- \frac{d\mu_{g}}{d\mu_h}\Big|^{n/(n-1)}\int_M |\tilde\nabla u|^{n/(n-1)}|u|^{n/(n-1)}d\mu_h\\
	&+ C \left(\int_M u^{2n/(n-2)}d\mu_h\right)^{(n-2)/(n-1)}\left(\int_M|\tilde\nabla \left(\frac{d\mu_{g_\delta}}{d\mu_h}-\frac{d\mu_{g}}{d\mu_h}\right)|^nd\mu_h\right)^{1/(n-1)}\\
	\le & C \sup_M 	\Big|\frac{d\mu_{g_\delta}}{d\mu_h}- \frac{d\mu_{g}}{d\mu_h}\Big|^{n/(n-1)}\left(\int_M |\tilde\nabla u|^{2}d\mu_h\right)^{\frac{n}{2(n-1)}}\left(\int_M|u|^{2n/(n-2)}d\mu_h\right)^{\frac{n-2}{2(n-1)}}\\
	&+ C \left(\int_M u^{2n/(n-2)}d\mu_h\right)^{(n-2)/(n-1)}\left(\int_M|\tilde\nabla \left(\frac{d\mu_{g_\delta}}{d\mu_h}-\frac{d\mu_{g}}{d\mu_h}\right)|^nd\mu_h\right)^{1/(n-1)}\\
	\le &C\left(\sup_M 	\Big|\frac{d\mu_{g_\delta}}{d\mu_h}- \frac{d\mu_{g}}{d\mu_h}\Big|^{n/(n-1)}
	+ \left(\int_M|\tilde\nabla \left(\frac{d\mu_{g_\delta}}{d\mu_h}-\frac{d\mu_{g}}{d\mu_h}\right)|^nd\mu_h\right)^{1/(n-1)}\right)\left(\int_M |\tilde\nabla u|^{2}d\mu_h\right)^{\frac{n}{(n-1)}}
	\end{align}
	Combining the estimates above,  we arrive at
	\begin{align}
		\Big|\int_M& V\cdot \tilde\nabla \left(u^2\frac{d\mu_g}{d\mu_h}\right)d\mu_h-\int_M V_\delta\cdot \tilde\nabla \left(u^2\frac{d\mu_{g_\delta}}{d\mu_h}\right)d\mu_h\Big|\\
		\le & C\left(\int_M|V_\delta-V|^nd\mu_h\right)^{1/n}\left( 1+\left(\int_M|\tilde\nabla \frac{d\mu_{g_\delta}}{d\mu_h}|^nd\mu_h\right)^{1/n}\right)\int_M |\tilde\nabla u|^{2}d\mu_h\\
		&+ C\left( \sup_M 	\Big|\frac{d\mu_{g_\delta}}{d\mu_h}- \frac{d\mu_{g}}{d\mu_h}\Big| 
	+  \left(\int_M|\tilde\nabla \left(\frac{d\mu_{g_\delta}}{d\mu_h}-\frac{d\mu_{g}}{d\mu_h}\right)|^nd\mu_h\right)^{1/n}\right)\int_M |\tilde\nabla u|^{2}d\mu_h.	
	\end{align}
	Therefore, we get 
	\begin{align}
		|\langle R_{g_\delta},u^2\rangle-\langle R_g,u^2\rangle|\le \Psi(\delta)\int_M |\tilde\nabla u|^{2}d\mu_h,\forall u\in C_0^\infty(M).
	\end{align}
	Since $g$ and $h$ is comparable, we get 
	\begin{align}
		|\langle R_{g_\delta},u^2\rangle-\langle R_g,u^2\rangle|\le \Psi(\delta)\int_M | \nabla u|^{2}d\mu_g,\forall u\in C_0^\infty(M).
	\end{align}
	which completes the proof of the lemma.
\end{proof}
\begin{remark}
In the same condition of Lemma \ref{lm2.2}, we can calculate in the same manner and get that 
\[|\langle R_{g_\delta},u\rangle-\langle R_g,u\rangle|\leq \Psi(\delta)\left(\int_M|\nabla u|^\frac{n}{n-1}d\mu_{g_\delta}\right)^\frac{n-1}{n},\forall u\in C_0^\infty(M),\]  
where $\Psi(\delta)$ is independent of $u$ and $\Psi(\delta)\to 0$ as $\delta\to 0$. 
\end{remark}

\subsection{Weak nonnegative scalar curvature }
Under the same assumption about $\Sigma$ as in Theorem \ref{thm1.2}, we can check that $R_g$ is weakly nonnegative. We have the following lemma.
\begin{lemma}\label{lm4.1}
	Let $ M^n $ be a smooth manifold with $g\in C^0\cap W^{1,p}_{loc}(M)$ with $n\le p\le \infty$. Assume $g$ is smooth away from a closed subset $\Sigma$ with $\cH^{n-\frac{p}{p-1}}(\Sigma)<\infty$ if $n\le p<\infty$ or $\cH^{n-1}(\Sigma)=0$ if $p=\infty$, and assume $R_g\ge 0$ on $M\setminus \Sigma$, then $\langle R_g, u\rangle\ge 0$ for any nonnegative, compactly supported $u\in W^{1,p/(p-1)}$. 
\end{lemma}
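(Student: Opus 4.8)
The plan is to reduce to the smooth approximation provided by Lemma \ref{lm2.1} and the stability estimate of Lemma \ref{lm2.2}, with the extra input being a \emph{cutoff} that removes a small neighborhood of the singular set $\Sigma$ without costing anything in the limit. Fix a nonnegative, compactly supported $u\in W^{1,p/(p-1)}$. Since we want to use $\langle R_g,\cdot\rangle$ defined for test functions in $W^{1,n/(n-1)}$, first note $p/(p-1)\le n/(n-1)$ for $p\ge n$ is the wrong direction; instead I would approximate $u$ by $C_0^\infty$ functions in the $W^{1,p/(p-1)}$ topology and use the remark after Lemma \ref{lm2.2} (which is stated for $W^{1,n/(n-1)}$ but whose analogue with $n$ replaced by $p$ holds by the identical computation, using the $W^{1,p}$-convergence $g_\delta\to g$) to pass the estimate to general $u$. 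So it suffices to prove $\langle R_g,u\rangle\ge 0$ for $u\in C_0^\infty(M)$, $u\ge 0$.

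The key step is to construct, for each small $\rho>0$, a logarithmic-type cutoff $\chi_\rho\in C_0^\infty(M)$ with $0\le\chi_\rho\le1$, $\chi_\rho\equiv 0$ on a neighborhood of $\Sigma$, $\chi_\rho\equiv 1$ outside the $\rho$-neighborhood $U_\rho$ of $\Sigma$, and such that $\|\tilde\nabla\chi_\rho\|_{L^p}\to 0$ as $\rho\to 0$. This is exactly where the Hausdorff-measure hypothesis on $\Sigma$ enters: when $n\le p<\infty$ the condition $\cH^{n-p/(p-1)}(\Sigma)<\infty$ is precisely the borderline condition guaranteeing that $\Sigma$ has vanishing $W^{1,p}$-capacity, so such cutoffs with $\int|\tilde\nabla\chi_\rho|^p\to0$ exist (cover $\Sigma$ by balls $B_{r_i}(x_i)$ with $\sum r_i^{\,n-p/(p-1)}$ bounded, take $\chi_\rho$ a product/sum of standard log cutoffs on annuli $B_{r_i}\setminus B_{r_i^2}$ or linear cutoffs on $B_{2r_i}\setminus B_{r_i}$, and compute); when $p=\infty$ the hypothesis $\cH^{n-1}(\Sigma)=0$ gives $\int|\tilde\nabla\chi_\rho|^n\to 0$, which is what is needed since then we only need control in $W^{1,n}$. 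Now, applying $\langle R_g,\cdot\rangle$ to $\chi_\rho u$ and using that $g$ is smooth (hence $R_g\ge 0$ classically) on the open set where $\chi_\rho u$ is supported, one gets $\langle R_g,\chi_\rho u\rangle=\int_{M\setminus\Sigma} R_g\,\chi_\rho u\,\frac{d\mu_g}{d\mu_h}d\mu_h\ge 0$ (here one must justify that the distributional pairing agrees with the classical integral on a neighborhood of $\mathrm{supp}(\chi_\rho u)$, which holds because $g\in C^2$ there and $V,F$ are the classical divergence-form expressions for $R_g$).

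It then remains to show $\langle R_g,\chi_\rho u\rangle\to\langle R_g,u\rangle$ as $\rho\to0$. Writing out the defining integral, the difference is $\int_M\big(-V\cdot\tilde\nabla((1-\chi_\rho)u\,\tfrac{d\mu_g}{d\mu_h})+F(1-\chi_\rho)u\,\tfrac{d\mu_g}{d\mu_h}\big)d\mu_h$; since $V\in L^p$, $F\in L^{p/2}$ locally (from $g\in W^{1,p}$, keeping in mind $V$ is first-order and $F$ quadratic in $\tilde\nabla g$), and $d\mu_g/d\mu_h$ is bounded, a Hölder estimate bounds this by $C\big(\|(1-\chi_\rho)u\|_{W^{1,p'}}+\|u\,\tilde\nabla\chi_\rho\|_{L^{p'}}\big)$ with $p'=p/(p-1)$; the first term $\to 0$ because $(1-\chi_\rho)$ is supported in $U_\rho$ and $|U_\rho|\to0$ while $|\tilde\nabla\chi_\rho|\to0$ in $L^p\supset L^{p'}$ on the bounded region, and the second is controlled by $\|u\|_\infty\|\tilde\nabla\chi_\rho\|_{L^{p'}}\to0$ (for $p=\infty$ one instead needs $F\in L^{n/2}$, $V\in L^n$, which is the ambient hypothesis, and uses $\|\tilde\nabla\chi_\rho\|_{L^{n}}\to0$). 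The main obstacle is the capacity/cutoff construction and getting the exponent bookkeeping exactly right — checking that $\cH^{n-p/(p-1)}(\Sigma)<\infty$ really does produce cutoffs with $\int|\tilde\nabla\chi_\rho|^{p}\to 0$ (a Vitali-covering argument with the correct power of the radii), and making sure that whenever $p>n$ the weaker $L^{p'}$ control suffices, while for $p=\infty$ one must not lose the factor and should run the argument in $W^{1,n}$ throughout.
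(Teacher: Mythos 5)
Your overall strategy is the same as the paper's: reduce to nice test functions, isolate a shrinking neighborhood of $\Sigma$ with a cutoff, use the classical nonnegativity of $R_g$ away from $\Sigma$, and show the near-$\Sigma$ contribution vanishes by H\"older. (The paper writes $u=\eta_\epsilon u+(1-\eta_\epsilon)u$ with $\eta_\epsilon\equiv 1$ near $\Sigma$; your $\chi_\rho$ plays the role of $1-\eta_\epsilon$, and the paper uses Lipschitz rather than $C^\infty_0$ approximants — both work, via H\"older continuity of $\langle R_g,\cdot\rangle$ on $W^{1,p/(p-1)}$, not really via Lemma~\ref{lm2.2}, which concerns perturbing the metric, not the test function.)

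There is, however, a persistent exponent confusion that needs fixing. You assert the cutoff should satisfy $\|\tilde\nabla\chi_\rho\|_{L^p}\to 0$ and speak of vanishing ``$W^{1,p}$-capacity''; what the hypothesis $\cH^{\,n-p/(p-1)}(\Sigma)<\infty$ actually gives is $\|\tilde\nabla\chi_\rho\|_{L^{p/(p-1)}}\to 0$ (vanishing $(1,p/(p-1))$-capacity), and that is precisely what your H\"older estimate at the end uses, since $V\in L^p$ pairs against $p'=p/(p-1)$ — so your final computation is right but your intermediate capacity claim is not. More seriously, for $p=\infty$ you say $\cH^{n-1}(\Sigma)=0$ gives $\int|\tilde\nabla\chi_\rho|^n\to 0$ and that one should ``run the argument in $W^{1,n}$''; neither is correct, and the second would in fact require $\cH^{\,n-\frac{n}{n-1}}(\Sigma)<\infty$, which $\cH^{n-1}(\Sigma)=0$ does \emph{not} imply (it sits at strictly lower dimension). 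The right version for $p=\infty$ is that $g\in W^{1,\infty}$ gives $V,F\in L^\infty$, so H\"older is run against $\|\tilde\nabla\chi_\rho\|_{L^1}\to 0$, and $\cH^{n-1}(\Sigma)=0$ yields exactly this $L^1$ smallness (Lemma~\ref{l:cut-off}(2)). Finally, note that a naive Vitali covering with linear annular cutoffs only gives $\int|\tilde\nabla\chi_\rho|^{p'}\lesssim \cH^{\,n-p'}(\Sigma)$ (bounded, not small); to get the limit zero the paper's Lemma~\ref{l:cut-off} uses a scale-averaging trick $\phi_k=2^{-k}\sum_{\alpha=2^k+1}^{2^{k+1}}\psi_\alpha$, whose extra factor $2^{k(1-p')}\to 0$ (with $p'>1$) is what produces smallness — this is the one nontrivial piece of the cutoff construction that your sketch elides.
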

\begin{proof}
	By definition, we have 
	\begin{align}
		\langle R_g,u\rangle=\int_M \left(-V\cdot \tilde\nabla \left(u \frac{d\mu_g}{d\mu_h}\right)+Fu \frac{d\mu_g}{d\mu_h}\right)d\mu_h.
	\end{align}
	When $n\le p<\infty$, since nonnegative Lipschitz function is dense in $W^{1,p/(p-1)}$, assume $u_i\to u$ in $W^{1,p/(p-1)}$ where $u_i\ge 0$ and Lipschitz. By using Cauchy inequality, it is easy to check that as $i\to \infty$
	\begin{align}
		\Big| \langle R_g,u\rangle-\langle R_g,u_i\rangle\Big|=\Big|\langle R_g, u-u_i\rangle \Big|\to 0.
	\end{align}

	Therefore, to prove the lemma, it suffices to show it holds for nonnegative Lipschitz function $u$. Let us now assume $u\ge 0$ and $|\nabla u|\le L$. Since $u$ has compact support, in particular, we have $|u|\le C(L)$.
	Let $\eta_\epsilon\ge 0$ be a sequence of smooth cutoff functions of $\Sigma$ as in Lemma \ref{l:cut-off} such that 
	\begin{itemize}
		\item[(1)] $\eta_\epsilon\equiv 1$ in a neighborhood of $\Sigma$,
		\item[(2)] ${\rm supp ~}\eta_\epsilon \subset B_{\epsilon}(\Sigma)$ and $0\le \eta_\epsilon\le 1$.
		\item[(3)] $\lim_{\epsilon\to 0}\int_M|\nabla \eta_\epsilon|^{p/(p-1)}d\mu_h=0$. 
	\end{itemize}
	We have 
	\begin{align}
		\langle R_g,u\rangle=\langle R_g,\eta_\epsilon u\rangle+\langle R_g,(1-\eta_\epsilon)u\rangle.
	\end{align}
	Noting that for any $\epsilon>0$ we have 
	\begin{align}
		\langle R_g,(1-\eta_\epsilon)u\rangle=\int_{M\setminus \Sigma} R_g(1-\eta_\epsilon)ud\mu_g\ge   0. 
	\end{align}
	To prove the lemma, it suffices to show that 
	\begin{align}\label{e:limitepsilonRgu0}
		\lim_{\epsilon\to 0}|\langle R_g,\eta_\epsilon u\rangle|=0.
	\end{align}
	Actually, noting that $u$ is bounded and Lipschitz we can estimate 
		\begin{align}
		|\langle R_g,\eta_\epsilon u\rangle|\le& \int_M |V|\cdot \Big|\tilde\nabla (\eta_\epsilon u \frac{d\mu_g}{d\mu_h})\Big|d\mu_h+\int_M |F|\cdot \eta_\epsilon u \frac{d\mu_g}{d\mu_h}d\mu_h\\
		\le &C\int_M |V||\bar \nabla \eta_\epsilon|d\mu_h +C \int_M |V|  |\tilde\nabla g| \eta_\epsilon d\mu_h+C\int_M |V| \eta_\epsilon d\mu_h+C \int_M|F|\eta_\epsilon d\mu_h\\
		\le& C\left(\int_{M\cap B_{\epsilon}(\Sigma)}|\tilde\nabla g|^pd\mu_h\right)^{1/p}\left(\int_M\Big(|\tilde\nabla\eta_\epsilon|^{p/(p-1)}+|\eta_\epsilon|^{p/(p-1)}\Big) d\mu_h\right)^{(p-1)/p}\\
		&+C\left(\int_{M\cap B_{\epsilon}(\Sigma)}|\tilde\nabla g|^pd\mu_h\right)^{2/p}\left(\int_M|\eta_\epsilon|^{p/(p-2)}d\mu_h\right)^{(p-2)/p}.  
	\end{align}
	Letting $\epsilon\to 0$, by the properties of $\eta_\epsilon$ we get \eqref{e:limitepsilonRgu0}. Thus we finish the proof of the lemma for $n\le p<\infty$.\\
	When $p=\infty$, the arguement above still works. (See also  \cite{Lee2015a}.) Thus the lemma follows.
\end{proof}

\section{Positive mass Theorem: Nonnegativity}

Now we use some technical results established above and modify Miao's method \cite{Miao2002} to prove Theorem \ref{thm1.2}. At first, we choose an arbitrary end of $M$. Let us recall a lemma essentially proved in \cite[Lemma 3.2]{Schoen1979}.
\begin{lemma}[\cite{Schoen1979}]\label{lm6.1}
Let $(M,{\tilde g})$ be a complete smooth asymptotically flat manifold, $f,h$ be smooth functions with compact support on $M$, then there exists an $\epsilon>0$, such that if $f$ satisfies
\begin{align*}
\int_M f\xi^2d\mu_{\tilde g}\geq -\epsilon \int_M |\nabla \xi|^2d\mu_{\tilde g}, \forall \xi\in C_0^\infty(M),
\end{align*}
then the equation
\begin{equation}\label{eqqq}
\Delta_{\tilde g}v -fv=h 
\end{equation}
has a solution $v$ satisfying $v=O(r^{2-n})$ as $r\to\infty$. Moreover, we have
\[v=\frac{A}{r^{n-2}}+\omega,\]
where $A$ is a constant, $\omega=O(r^{1-n})$ and $|\partial\omega|=O(r^{-n})$.
\end{lemma}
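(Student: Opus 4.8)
The plan is to produce $v$ by a variational argument on the homogeneous Sobolev space of $(M,\tilde g)$ and then read off the asymptotics from weighted elliptic estimates. Let $\mathcal D^{1,2}(M)$ be the completion of $C_0^\infty(M)$ in the norm $\|\nabla\xi\|_{L^2(\tilde g)}$. Since $(M,\tilde g)$ is asymptotically flat it satisfies a Sobolev inequality $\|\xi\|_{L^{2n/(n-2)}}\le C_s\|\nabla\xi\|_{L^2}$, so $\mathcal D^{1,2}(M)$ embeds continuously into $L^{2n/(n-2)}(M)$ and the norm is genuine. Consider the bilinear form $B(v,\xi)=\int_M(\nabla v\cdot\nabla\xi+f v\xi)\,d\mu_{\tilde g}$. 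Choosing $\epsilon<1$ in the hypothesis on $f$ gives coercivity, $B(\xi,\xi)\ge(1-\epsilon)\|\nabla\xi\|_{L^2}^2$; boundedness of $B$ on $\mathcal D^{1,2}$ follows because $f$ has compact support $K$ and on $K$ one estimates $\int_K|f||v||\xi|\le C\|\nabla v\|_{L^2}\|\nabla\xi\|_{L^2}$ by Hölder and Sobolev. The functional $\xi\mapsto-\int_M h\xi\,d\mu_{\tilde g}$ is likewise bounded on $\mathcal D^{1,2}$ since $h\in C_0^\infty\subset L^{2n/(n+2)}$. Lax--Milgram then yields a unique $v\in\mathcal D^{1,2}(M)$ with $B(v,\xi)=-\int_M h\xi$ for all $\xi\in C_0^\infty$, i.e. a weak solution of $\Delta_{\tilde g}v-fv=h$, and interior elliptic regularity makes $v$ smooth because $\tilde g,f,h$ are. (An equivalent route is exhaustion: solve the Dirichlet problem on coordinate balls $B_R$, use the same Sobolev-plus-coercivity estimate to bound $\|v_R\|_{L^{2n/(n-2)}}$ uniformly, and pass to the limit.)

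For the decay I would invoke the weighted elliptic theory for the asymptotically flat Laplacian (Bartnik, McOwen, Cantor): $\Delta_{\tilde g}\colon W^{2,q}_{-\tau}(M)\to L^q_{-\tau-2}(M)$ is Fredholm of index zero for every non-exceptional $\tau$ and $q>1$, and adding the compactly supported zeroth-order term $-f$ is a compact perturbation, so $\Delta_{\tilde g}-f$ remains Fredholm of index zero on these spaces. Its kernel is trivial for $\tau\in\big((n-2)/2,\,n-2\big)$: any $w\in W^{2,q}_{-\tau}$ in the kernel decays fast enough ($w=O(r^{-\tau})$, $\nabla w=O(r^{-\tau-1})$ with $\tau>(n-2)/2$) to lie in $\mathcal D^{1,2}(M)$, whence coercivity forces $w\equiv0$. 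Therefore $\Delta_{\tilde g}-f$ is an isomorphism onto $L^q_{-\tau-2}$ for such $\tau$. Since $h\in C_0^\infty$ lies in every $L^q_{-\tau-2}$, taking $\tau$ just below $n-2$ and $q$ large and using the weighted Sobolev embedding gives a solution with $v=O(r^{2-n+\sigma})$, $\nabla v=O(r^{1-n+\sigma})$ for small $\sigma>0$; by uniqueness in $\mathcal D^{1,2}(M)$ this is the same $v$ as above.

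Finally I would sharpen $O(r^{2-n+\sigma})$ to the stated expansion. Outside a compact set $K\supset\mathrm{supp}\,f\cup\mathrm{supp}\,h$ one has $\Delta_{\tilde g}v=0$, so in the asymptotically flat coordinates, writing $\Delta_{\tilde g}=\Delta_0+L$ where the error operator $L$ has second-order coefficients of size $O(r^{-\delta})$ and lower-order coefficients $O(r^{-\delta-1})$, one gets $\Delta_0 v=-Lv$. Interior Schauder estimates on the dyadic annuli $\{2^k\le r\le 2^{k+1}\}$, rescaled to unit size, promote a pointwise bound $v=O(r^{-\beta})$ to $\nabla^2 v=O(r^{-\beta-2})$, hence $Lv=O(r^{-\beta-2-\delta})$; solving $\Delta_0 w=-Lv$ by the Euclidean Newtonian potential produces a correction whose non-harmonic part decays at rate $r^{-\beta-\delta}$, and $v-w$ is $\Delta_0$-harmonic and decaying, so it has the classical exterior expansion starting at order $r^{2-n}$. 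Iterating finitely many times — each step gaining at least $\delta>(n-2)/2$ — yields $v=A r^{2-n}+\omega$ with $\omega=O(r^{1-n})$, and the same annulus Schauder estimates give $|\partial\omega|=O(r^{-n})$. The main obstacle is exactly this last step: running the iteration with correct bookkeeping (in low dimensions $\delta$ may be $<1$, so several passes are needed) and checking that no spurious slowly decaying harmonic term is ever introduced; existence, regularity, and triviality of the kernel are all routine once the coercivity coming from the hypothesis on $f$ is in hand.
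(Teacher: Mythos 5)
Your proposal is correct, and it reaches the same destination, but the route differs from the paper's in two respects worth noting. For existence the paper follows Schoen--Yau verbatim: it exhausts $M$ by coordinate balls $M^\sigma$, invokes the Fredholm alternative on each after checking that coercivity ($\epsilon<1$) kills the Dirichlet kernel, then extracts uniform $L^{2n/(n-2)}$ bounds, runs Moser iteration for a $C^0$ bound, applies Schauder for $C^2$ equicontinuity, and passes to the limit by Arzel\`a--Ascoli. You instead work once and for all on $\mathcal D^{1,2}(M)$ and apply Lax--Milgram, which collapses the bound-then-Moser-then-Schauder-then-compactness chain into a single coercivity statement, with interior regularity as an afterthought; you even flag the exhaustion version as an equivalent alternative, so there is no real gap, just economy. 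For the asymptotic expansion the paper simply defers entirely to \cite[Lemma 3.2]{Schoen1979}, whereas you actually supply the argument: a preliminary decay $v=O(r^{2-n+\sigma})$ from the weighted Fredholm theory of $\Delta_{\tilde g}-f$ (index zero, trivial kernel for weights $\tau\in((n-2)/2,n-2)$, with triviality following because such kernel elements lie in $\mathcal D^{1,2}$ and coercivity then forces vanishing), followed by the rescaled dyadic-annulus Schauder bootstrap $\Delta_0 v=-Lv$ to peel off the leading harmonic $A r^{2-n}$ and land the remainder in $O(r^{1-n})$ with $|\partial\omega|=O(r^{-n})$. Your version is therefore more self-contained than the paper's; the trade-off is that it imports the Bartnik--McOwen--Cantor weighted Fredholm machinery, which is heavier than what the paper needs to cite given that it is already leaning on Schoen--Yau's Lemma 3.2 for exactly this step. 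Both arguments are sound; neither produces a statement the other misses.
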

\begin{remark}
The condition that $f$ and $h$ have compact support can be weakened to a decay condition. However, Lemma \ref{lm6.1} is good enough for our use.
\end{remark}
\begin{remark}
We will let $\tilde g=g_\delta$ when we apply Lemma \ref{lm6.1}. 
\end{remark}
\begin{proof}
Since the proof is almost the same as Schoen-Yau \cite[Lemma 3.2]{Schoen1979}, we only give the proof of the existence of $v$ arguing as Schoen-Yau \cite{Schoen1979}. Suppose that $\Sigma_k$ is the end we take in consideration and there is an asymptotically flat coordinate from $\Sigma_k$ to $\mathbb{R}^n\backslash B_\sigma(0)$. Let us solve the equation
\begin{align}\label{eq6.1}\left\{
\begin{array}{rl}
&\Delta_{\tilde g} v_\sigma-fv_\sigma=h \quad on \quad M^\sigma\\
&v_\sigma=0 \quad on \quad \partial B_\sigma 
\end{array} \right., 
\end{align}
where $M^\sigma=(M\backslash\Sigma_k)\bigcup (\Sigma_k\bigcap B_\sigma)$ and $\sigma>\sigma_0$.
To study the kernel of $\Delta_{\tilde g}-f$, we take $h=0$, and assume $v_\sigma$ is the solution.  Multiplying $v_\sigma$ to the equation and integrating by parts, then we get
\begin{align*}
\int_{M^\sigma} |\nabla v_\sigma|^2d\mu_{\tilde g}&=-\int_{M^\sigma} f v_\sigma^2d\mu_{\tilde g}\\
&\leq \epsilon \int_{M^\sigma} |\nabla v_\sigma|^2d\mu_{\tilde g}
\end{align*}
Therefore, if we take $\epsilon<1$, then we have $|\nabla v_\sigma|\equiv0$ on $M^\sigma$. By the boundary condition, we get that the kernel is trivial. Thus by Fredholm alternative, (\ref{eq6.1}) has a unique solution for general $h\in C_0^\infty(M)$. 

For general $h\in C_0^\infty(M)$, multiplying $v_\sigma$ to (\ref{eq6.1}) and integrating by parts, by Sobolev inequality and Cauchy inequality, we get

\begin{align*}
\int_{M^\sigma} |\nabla v_\sigma|^2d\mu_g&=
-\int_{M^\sigma} f v_\sigma^2d\mu_{\tilde g}-\int_{M^\sigma} h v_\sigma d\mu_{\tilde g}\\
&\leq \epsilon \int_{M^\sigma} |\nabla v_\sigma|^2d\mu_{\tilde g} +\|h\|_{L^{2n/(n+2)}}  \|v_\sigma\|_{L^{2n/(n-2)}}\\
&\leq \epsilon \int_{M^\sigma} |\nabla v_\sigma|^2d\mu_{\tilde g} + C(\tilde g, h) \left(\int_{M^\sigma} |\nabla v_\sigma|^2d\mu_{\tilde g}\right)^\frac{1}{2}\\
&\leq \epsilon \int_{M^\sigma} |\nabla v_\sigma|^2d\mu_{\tilde g} +\epsilon \int_{M^\sigma} |\nabla v_\sigma|^2d\mu_{\tilde g} +C(\tilde g, h)\epsilon^{-1}
\end{align*}
Thus if we choos $\epsilon<\frac{1}{2}$, then we have 
\begin{align*}
\int_{M^\sigma} |\nabla v_\sigma|^2d\mu_{\tilde g}< C(\tilde g, h)
\end{align*}
Thus by Sobolev inequality, we have
\begin{align*}
\|v_\sigma\|_{L^{2n/(n-2)}}< C(\tilde g, h).
\end{align*}
By Moser iteration (see \cite[Theorem 4.1]{han1997elliptic}), we have 
\begin{align*}
\|v_\sigma\|_{C^0}< C(\tilde g, h).
\end{align*}
Thus by Schauder theory (see \cite[Theorem 6.2]{GTru}), we have that $\{v_\sigma|\sigma>\sigma_0\}$ is equicontinuous in $C^2$ topology on any compact subsets of $M$. Thus by Arzela-Ascoli Theorem, there is a $v\in C^2(M)$ and a sequence $\sigma_i\to\infty$ such that $v_{\sigma_i}\to v$ uniformly in $C^2$-norm on any compact subsets of $M$. Thus $v$ solves $\ref{eqqq}$ and therefore $v$ is smooth.

The analysis of the asymptotic behavior of $v$ has no difference with Schoen-Yau \cite[Lemma 3.2]{Schoen1979}. Thus the lemma is proved.
\end{proof}
Now, let $M^n$ ($n\geq3$) be a smooth manifold, and let $g\in C^0\cap W^{1,p}_{loc}(M)$ ($n\le p\le \infty$) be a complete asymptotically flat metric on $M$. Assume $g$ is smooth away from a bounded closed subset $\Sigma$ with $\cH^{n-\frac{p}{p-1}}(\Sigma)<\infty$ if $n\le p<\infty$ or $\cH^{n-1}(\Sigma)=0$ if $p=\infty$, assume that $R_g\ge 0$ on $M\setminus \Sigma$. Let $g_\delta$ be the smooth mollification metric proposed in Lemma \ref{lm2.1}, which converge to $g$ and equal to $g$ outside a compact set $K \supset \Sigma$. Denote $c_n=\frac{n-2}{4(n-1)}$ and $R_{g_\delta}$ to be the scalar curvature of $g_\delta$. Let $\varphi:M\to[0,1]$ be a smooth cut-off function such that $\varphi=1$ on $K$, $\varphi=0$ outside some neighborhood of $K$. 
We consider the equation (see also \cite{LiMa} )
\begin{align}\label{eq6.2}\left\{
\begin{array}{rl}
&\Delta_{g_\delta} u_\delta-c_n\varphi^2 R_{g_\delta} u_\delta=0 \quad on \quad M\\
&\lim_{r\to\infty}u_\delta=1 
\end{array} \right., 
\end{align}

\begin{corollary}\label{cor6.2}
There exists $\delta_0>0$ such that the equation (\ref{eq6.2}) has a positive solution for all $\delta\in(0,\delta_0)$. Moreover, we have
\begin{align*}
u_\delta=1+\frac{A_\delta}{r^{n-2}}+\omega,
\end{align*}
where $A_\delta$ is a constant, $\omega=O(r^{1-n})$ and $|\partial\omega|=O(r^{-n})$.
\end{corollary}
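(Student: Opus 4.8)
The plan is to reduce \eqref{eq6.2} to Lemma \ref{lm6.1}. Writing $u_\delta=1+v_\delta$, equation \eqref{eq6.2} becomes
\begin{align*}
	\Delta_{g_\delta}v_\delta-c_n\varphi^2 R_{g_\delta}v_\delta=c_n\varphi^2 R_{g_\delta},\qquad v_\delta\to0\ \text{ at infinity},
\end{align*}
which is of the form $\Delta_{\tilde g}v-fv=h$ with $\tilde g=g_\delta$ and $f=h=c_n\varphi^2 R_{g_\delta}$. Since $g_\delta$ is smooth and equals $g$ outside $K$, it is a complete smooth asymptotically flat metric, and $f,h$ are smooth with compact support (contained in $\mathrm{supp}\,\varphi$). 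Inspecting the proof of Lemma \ref{lm6.1}, the threshold $\epsilon$ there may be taken to be any fixed number less than $1/2$ independently of $\tilde g$ (only the constants $C(\tilde g,h)$ arising in the argument depend on $\tilde g$, and they need only be finite), and the $g_\delta$ all have Sobolev constants comparable to that of $g$. Hence it suffices to produce $\delta_0>0$ such that, for all $\delta\in(0,\delta_0)$,
\begin{align}\label{e:coerc}
	\int_M c_n\varphi^2 R_{g_\delta}\,\xi^2\,d\mu_{g_\delta}\ \ge\ -\tfrac14\int_M|\nabla\xi|^2\,d\mu_{g_\delta}\qquad\text{for all }\xi\in C_0^\infty(M).
\end{align}

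To prove \eqref{e:coerc}, write $\varphi^2\xi^2=(\varphi\xi)^2$ and use that for the smooth metric $g_\delta$ the scalar curvature distribution agrees with the classical integral, so the left side equals $c_n\langle R_{g_\delta},(\varphi\xi)^2\rangle$. By Lemma \ref{lm4.1}, applied to the nonnegative, smooth, compactly supported function $(\varphi\xi)^2$, we have $\langle R_g,(\varphi\xi)^2\rangle\ge0$; and by Lemma \ref{lm2.2} applied to $u=\varphi\xi\in C_0^\infty(M)$,
\begin{align*}
	\langle R_{g_\delta},(\varphi\xi)^2\rangle\ \ge\ \langle R_g,(\varphi\xi)^2\rangle-\Psi(\delta)\int_M|\nabla(\varphi\xi)|^2\,d\mu_g\ \ge\ -\Psi(\delta)\int_M|\nabla(\varphi\xi)|^2\,d\mu_g.
\end{align*}
Since $\varphi$ is fixed, $|\nabla(\varphi\xi)|^2\le2\varphi^2|\nabla\xi|^2+2\xi^2|\nabla\varphi|^2$, and the $L^2$ Sobolev inequality for $g$ gives $\int_{\mathrm{supp}\,\varphi}\xi^2\,d\mu_g\le C(C_s,\varphi)\int_M|\nabla\xi|^2\,d\mu_g$; hence $\int_M|\nabla(\varphi\xi)|^2\,d\mu_g\le C\int_M|\nabla\xi|^2\,d\mu_g$ with $C$ independent of $\delta$. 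As $g$ and $g_\delta$ are uniformly comparable, \eqref{e:coerc} holds once $\delta_0$ is chosen so that $C\Psi(\delta)\le1/4$ for $\delta<\delta_0$. Lemma \ref{lm6.1} then yields $v_\delta=A_\delta r^{2-n}+\omega$ with $\omega=O(r^{1-n})$, $|\partial\omega|=O(r^{-n})$, and therefore $u_\delta=1+A_\delta r^{2-n}+\omega$.

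For positivity, note $u_\delta\to1$ at infinity, so $w:=\min(u_\delta,0)$ is Lipschitz with compact support. Multiplying \eqref{eq6.2} by $w$ and integrating by parts (using $\nabla w=\nabla u_\delta$ a.e. on $\{u_\delta<0\}$, $\nabla w=0$ a.e. on $\{u_\delta\ge0\}$, and $u_\delta w=w^2$) gives
\begin{align*}
	\int_M|\nabla w|^2\,d\mu_{g_\delta}+\int_M c_n\varphi^2 R_{g_\delta}w^2\,d\mu_{g_\delta}=0,
\end{align*}
while \eqref{e:coerc}, extended to Lipschitz compactly supported functions by mollification, bounds the left side below by $\tfrac34\int_M|\nabla w|^2\,d\mu_{g_\delta}\ge0$. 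Hence $\nabla w\equiv0$, so $w\equiv0$ and $u_\delta\ge0$; since $u_\delta$ satisfies a linear elliptic equation and is not identically zero, the strong maximum principle gives $u_\delta>0$ on $M$.

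The main obstacle is establishing \eqref{e:coerc} uniformly in $\delta$: this is where the weak nonnegativity of $R_g$ (Lemma \ref{lm4.1}) and the distributional closeness of $R_{g_\delta}$ to $R_g$ (Lemma \ref{lm2.2}) must be combined, with the extra term $\xi^2|\nabla\varphi|^2$ produced by the cutoff absorbed via the Sobolev inequality, and one must check that the threshold in Schoen--Yau's Lemma \ref{lm6.1} and the relevant Sobolev constants do not degenerate as $\delta\to0$. The remaining steps — the reduction to Lemma \ref{lm6.1} and the maximum-principle argument for positivity — are routine.
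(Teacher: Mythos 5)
Your proof is correct and follows essentially the same route as the paper: reduce \eqref{eq6.2} to Lemma \ref{lm6.1} via $v_\delta=u_\delta-1$, and establish the uniform coercivity estimate for $c_n\varphi^2 R_{g_\delta}$ by combining Lemma \ref{lm2.2}, Lemma \ref{lm4.1}, and H\"older/Sobolev to absorb the $\xi^2|\nabla\varphi|^2$ term. The only genuine divergence is the positivity argument: where the paper simply cites the positivity step of Schoen--Yau's \cite[Lemma 3.3]{Schoen1979}, you give a self-contained argument by testing the equation against $w=\min(u_\delta,0)$, using the coercivity bound to force $w\equiv0$, and then invoking the strong maximum principle; this is a clean and equally valid alternative.
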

\begin{proof}
Denote $v_\delta=u_\delta-1$, then the equation becomes
\begin{align}\label{eq6.3}
\Delta_{g_\delta} v_\delta-c_n\varphi^2 R_{g_\delta} v_\delta=c_n\varphi^2 R_{g_\delta}
\end{align}
By \cite[Lemma 3.1]{Schoen1979}, there exists a constant $C>0$, such that for any $\xi\in C_0^{\infty}(M)$, the Sobolev inequality
\[\left(\int_M \xi^{\frac{2n}{n-2}}d
\mu_h\right)^{\frac{n-2}{n}}\leq C \int_M|\nabla \xi|^2d
\mu_h\]
holds. Thus by Lemma \ref{lm2.2}, and since $g_\delta$ are uniformly equivalent to $g$, we know that for any $\epsilon>0$, there exists $\delta_0\in(0,\delta_1)$, such that for any $\xi\in C_0^\infty(M)$
\begin{align}
|\langle R_{g_\delta},\xi^2 \rangle-\langle R_g,\xi^2 \rangle|\leq \epsilon\int_M|\nabla \xi |^2d\mu_{g_\delta},\forall \delta\in (0,\delta_0).
\end{align}
Since Lemma \ref{lm4.1} gives  
\[\langle R_g,\xi^2 \rangle\geq 0,\]
we have
\[\langle R_{g_\delta},\xi^2 \rangle\geq -\epsilon\int_M|\nabla \xi |^2d\mu_{g_\delta},\forall \delta\in (0,\delta_0).\]
Thus we can compute that
\begin{align}
\int_M c_n\varphi^2 R_{g_\delta}\xi^2d\mu_{g_\delta}&=c_n\langle R_{g_\delta},\varphi^2\xi^2\rangle\notag\\
&\geq -C\epsilon\int_M|\nabla (\varphi\xi) |^2d\mu_{g_\delta}\notag\\
&=-C\epsilon\left(\int_M|\varphi\nabla \xi |^2d\mu_{g_\delta}+\int_M|\xi\nabla \varphi |^2d\mu_{g_\delta}\right)\notag\\
&\geq -C\epsilon\left(\int_M|\nabla \xi |^2d\mu_{g_\delta}+\int_{{\rm{supp}\varphi}\backslash K}|\xi|^2d\mu_{g_\delta}\right)\notag\\
&\geq -C\epsilon\left(\int_M|\nabla \xi |^2d\mu_{g_\delta}+\left(\int_{{\rm{supp}\varphi}\backslash K}|\xi|^{\frac{2n}{n-2}}d\mu_{g_\delta}\right)^{\frac{n-2}{n}}\right)\label{eq6.5}\\
&\geq -C\epsilon \int_M|\nabla \xi|^2d\mu_{g_\delta}\label{eq6.6}, \forall \delta\in (0,\delta_0),
\end{align}
where $C$ denotes some positive constant which is independent of $\epsilon$ and $\delta$ and varies from line to line, and the last two inequalities follows from H\"older inequality and Sobolev inequality.
Thus by Lemma \ref{lm6.1}, we get the existence and the asymptotical estimate of $u_\delta$. For the positivity of $u_\delta$, we just need to combine the positivity proof in \cite[Lemma 3.3]{Schoen1979} and our proof of Lemma \ref{lm6.1}. See \cite{Schoen1979} for more details.
\end{proof}
\begin{proposition}\label{prop6.3}
Let $u_\delta$ be the positive solution of (\ref{eq6.2}), then
\begin{itemize}
\item[(1)]There exists a $\delta_0>0$, such that for any compact $A\subset M$, there exists a positive constant $C(A)$, such that
\[\int_A u_{\delta}^\frac{2n}{n-2}d\mu_{g_{\delta}}\leq C(A),\forall \delta\in(0,\delta_0).\]
\item[(2)]\[A_\delta=\frac{1}{(2-n)\omega_n}\int_M \big(|\nabla_{g_\delta}u_\delta|^2+c_n\varphi^2 R_{g_\delta} u_\delta^2\big)d\mu_{g_\delta},\]
where $\omega_n$ is the Euclidean volume of the $n-1$ dimensional unit sphere in $\mathbb{R}^n$.
\end{itemize}
\end{proposition}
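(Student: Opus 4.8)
The plan is to read off both statements from the divergence theorem applied to \eqref{eq6.2} on large coordinate balls $B_\rho$ in the chosen end, together with the decay of $u_\delta$ recorded in Corollary~\ref{cor6.2}. I would treat part (2) first, since it fixes the integration-by-parts mechanism. Multiplying $\Delta_{g_\delta}u_\delta=c_n\varphi^2R_{g_\delta}u_\delta$ by $u_\delta$ and integrating by parts over $B_\rho$ gives
\[
\int_{\partial B_\rho}u_\delta\,\partial_\nu u_\delta\,d\sigma_{g_\delta}
=\int_{B_\rho}\bigl(|\nabla_{g_\delta}u_\delta|^2+c_n\varphi^2R_{g_\delta}u_\delta^2\bigr)\,d\mu_{g_\delta}.
\]
Writing $v_\delta:=u_\delta-1$, both integrands on the right are globally integrable ($\varphi$ is compactly supported and $|\nabla_{g_\delta}u_\delta|=|\nabla_{g_\delta}v_\delta|=O(r^{1-n})$), so the right side converges as $\rho\to\infty$; the left side tends to $(2-n)\omega_nA_\delta$ by a direct computation using $u_\delta=1+A_\delta r^{2-n}+\omega$ and the asymptotic flatness of $g=g_\delta$ near infinity (so that $\partial_\nu u_\delta=(2-n)A_\delta r^{1-n}+O(r^{-n})$ and $|\partial B_\rho|_{g_\delta}=\omega_n\rho^{n-1}(1+o(1))$). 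Equating the limits yields the stated formula for $A_\delta$.

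For part (1), since $u_\delta>0$ gives $u_\delta^{2n/(n-2)}\le C_n\bigl(1+|v_\delta|^{2n/(n-2)}\bigr)$ and $\mu_{g_\delta}(A)\le C\mu_h(A)$, while the $L^2$-Sobolev inequality holds with a $\delta$-independent constant, the assertion reduces to the uniform energy bound $\int_M|\nabla_{g_\delta}v_\delta|^2\,d\mu_{g_\delta}\le C$. I would obtain this by testing $\Delta_{g_\delta}v_\delta-c_n\varphi^2R_{g_\delta}v_\delta=c_n\varphi^2R_{g_\delta}$ against $v_\delta$ itself; the boundary term now vanishes in the limit (since $v_\delta=O(r^{2-n})$ and $\nabla v_\delta=O(r^{1-n})$), leaving
\[
\int_M|\nabla_{g_\delta}v_\delta|^2\,d\mu_{g_\delta}
=-c_n\langle R_{g_\delta},(\varphi v_\delta)^2\rangle-c_n\langle R_{g_\delta},\varphi^2v_\delta\rangle.
\]
The quadratic term is controlled exactly as in the proof of Corollary~\ref{cor6.2}: Lemma~\ref{lm2.2} and Lemma~\ref{lm4.1} give $-c_n\langle R_{g_\delta},(\varphi v_\delta)^2\rangle\le c_n\Psi(\delta)\int_M|\nabla(\varphi v_\delta)|^2d\mu_{g_\delta}$, and the Leibniz rule, together with H\"older's inequality on $\operatorname{supp}\nabla\varphi$ and the Sobolev inequality, absorbs the $v_\delta^2|\nabla\varphi|^2$ contribution, so this is $\le C\Psi(\delta)\int_M|\nabla_{g_\delta}v_\delta|^2d\mu_{g_\delta}$. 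For the linear term I would use positivity of $u_\delta$: then $\varphi^2u_\delta=\varphi^2v_\delta+\varphi^2\ge0$ is an admissible nonnegative test function, so Lemma~\ref{lm4.1} gives $\langle R_g,\varphi^2v_\delta\rangle=\langle R_g,\varphi^2u_\delta\rangle-\langle R_g,\varphi^2\rangle\ge-\langle R_g,\varphi^2\rangle$, a fixed finite constant depending only on $g$ and $\varphi$; passing from $R_g$ to $R_{g_\delta}$ by the remark following the proof of Lemma~\ref{lm2.2} and estimating $\|\nabla(\varphi^2v_\delta)\|_{L^{n/(n-1)}}\le C\bigl(1+\|\nabla_{g_\delta}v_\delta\|_{L^2}\bigr)$ via H\"older and Sobolev costs only an error $C\Psi(\delta)\|\nabla_{g_\delta}v_\delta\|_{L^2}$. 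Collecting these bounds and shrinking $\delta_0$ so that $C\Psi(\delta)\le\tfrac14$, the displayed identity turns into a quadratic inequality $X^2\le\tfrac14X^2+C\Psi(\delta)X+c_n\langle R_g,\varphi^2\rangle$ in $X:=\bigl(\int_M|\nabla_{g_\delta}v_\delta|^2d\mu_{g_\delta}\bigr)^{1/2}$, which bounds $X$ in terms of $n$, the Sobolev constant and $\langle R_g,\varphi^2\rangle$ only. The Sobolev inequality then gives the uniform bound $\int_M|v_\delta|^{2n/(n-2)}d\mu_{g_\delta}\le C$, and part (1) follows.

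I expect the main obstacle to be the linear term $\langle R_{g_\delta},\varphi^2v_\delta\rangle$: the almost-nonnegativity furnished by Lemma~\ref{lm2.2} and Lemma~\ref{lm4.1} is a statement about $\langle R_{g_\delta},\xi^2\rangle$ for squares, and cannot be applied directly since $v_\delta$ changes sign. The way around it, as above, is to transfer to the genuine distributional scalar curvature $R_g$, for which positivity of $u_\delta$ makes $\varphi^2u_\delta$ a legitimate nonnegative test function, and then to pay only a vanishing error $\Psi(\delta)$ in returning to $R_{g_\delta}$. A secondary, routine point is to justify that $v_\delta$, which is not compactly supported but only $O(r^{2-n})$, may be fed into the global Sobolev inequality and into the integrations by parts; this is handled by a standard cut-off argument using the decay of $v_\delta$ and $\nabla v_\delta$.
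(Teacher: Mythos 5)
Your proposal matches the paper's own proof essentially line for line: part (2) by testing \eqref{eq6.2} against $u_\delta$, integrating by parts over large coordinate spheres and reading off $(2-n)\omega_n A_\delta$ from the asymptotics; part (1) by testing against $v_\delta=u_\delta-1$, transferring the quadratic term $\langle R_{g_\delta},\varphi^2v_\delta^2\rangle$ to $\langle R_g,\cdot\rangle$ via Lemma~\ref{lm2.2} plus Lemma~\ref{lm4.1}, and handling the sign-changing linear term by the same decomposition $\varphi^2v_\delta=\varphi^2u_\delta-\varphi^2$, exactly as the paper does. The resulting self-improving inequality for $\int_M|\nabla v_\delta|^2$ and the concluding use of the Sobolev inequality are also the paper's route, so there is nothing to flag.
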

\begin{proof}
Let us first prove the first statement (1). By the asymptotical behavior of $u_\delta$, we know that $u_\delta$ is bounded. However the $L^\infty$ bound may depend on $\delta$. Let us now show that $u_\delta$ is locally $L^\frac{2n}{n-2}$ where the bound is independent of $\delta$. To prove this, we only need to prove that there exists $\delta_0>0$, such that for any compact $A\subset M$, we have $\int_A u_{\delta}^\frac{2n}{n-2}d\mu_{g_{\delta}}\leq C(A)$, for all $\delta\in(0,\delta_0)$. Assume $v_\delta=u_\delta-1$ as in the proof of Corollary \ref{cor6.2}.
Multiplying $v_\delta$ to both sides of equation (\ref{eq6.3}),  we have
\begin{equation}\label{eq6.7}
\int_M|\nabla v_\delta|^2d\mu_{g_\delta}=-\int_M c_n \varphi^2 R_{g_\delta} v_\delta^2d\mu_{g_\delta}-\int_M c_n\varphi^2R_{g_\delta} v_\delta d\mu_{g_\delta}.
\end{equation}

 By Lemma \ref{lm2.2}, we have that for any $\epsilon>0$, there exists $\delta_0>0$ such that
\begin{align*}
|c_n\langle R_{g_ {\delta}},\varphi^2 v_\delta^2\rangle-c_n\langle R_g,\varphi^2 v_\delta^2\rangle|\leq \epsilon\int_M|\nabla_{g_{\delta}} (\varphi v_ {\delta})|^2d\mu_{g_ {\delta}},\forall \delta\in(0,\delta_0),
\end{align*}
\begin{align*}
|c_n\langle R_{g_ {\delta}},\varphi^2 v_\delta\rangle-c_n\langle R_g,\varphi^2 v_ {\delta}\rangle|\leq \epsilon\left(\int_M|\nabla_{g_{\delta}} (\varphi^2 v_ {\delta})|^\frac{n}{n-1}d\mu_{g_ {\delta}}\right)^\frac{n-1}{n},\forall  \delta\in(0,\delta_0),
\end{align*}
where $c_n\langle R_g,\varphi^2 v_ {\delta}^2\rangle\geq0$, and
\begin{align*}c_n\langle R_g,\varphi^2 v_ {\delta}\rangle&=c_n\langle R_g,\varphi^2 (v_ {\delta}+1)\rangle-c_n\langle R_g,\varphi^2\rangle\\
&\geq0-c_n\int_M \left(-V\cdot \tilde\nabla \left(\varphi^2 \frac{d\mu_g}{d\mu_h}\right)+F\varphi^2 \frac{d\mu_g}{d\mu_h}\right)d\mu_h\\
&\geq -C.
\end{align*}
Thus by equation (\ref{eq6.7}) we have
\begin{align*}\int_M |\nabla_{g_{\delta}} v_ {\delta}|^2d{\mu_{g_ {\delta}}}&\leq \epsilon\int_M|\nabla_{g_{\delta}} (\varphi v_ {\delta})|^2d\mu_{g_ {\delta}}+\epsilon\left(\int_M|\nabla_{g_{\delta}} (\varphi^2 v_ {\delta})|^\frac{n}{n-1}d\mu_{g_ {\delta}}\right)^\frac{n-1}{n}+C\\
&\leq \epsilon\int_M|\nabla_{g_{\delta}} (\varphi v_ {\delta})|^2d\mu_{g_ {\delta}}+C\epsilon \left(\int_{\rm{supp}\varphi}|\nabla_{g_{\delta}}(\varphi^2v_ {\delta})|^2d\mu_{g_ {\delta}}\right)^{\frac{1}{2}}+C\\
&\leq \epsilon\int_M|\nabla_{g_{\delta}} (\varphi v_{\delta})|^2d\mu_{g_ {\delta}}+C\epsilon\left(\int_M |\nabla_{g_{\delta}}(\varphi^2 v_ {\delta})|^2d\mu_{g_ {\delta}}+1\right)+C.
\end{align*}
Since
\begin{align*}
\int_M|\nabla_{g_{\delta}} (\varphi v_ {\delta})|^2d\mu_{g_ {\delta}}&\leq 2\int_M|\nabla_{g_{\delta}}\varphi|^2v_ {\delta}^2d\mu_{g_ {\delta}}+2\int_M\varphi^2|\nabla_{g_{\delta}} v_ {\delta}|^2d\mu_{g_ {\delta}}\\
&\leq 2\| \nabla_{g_{\delta}}\varphi\|_{L^{{n}}}^2\|v_ {\delta}\|_{L^\frac{2n}{n-2}}^2+2\int_M|\nabla_{g_{\delta}} v_ {\delta}|^2d\mu_{g_ {\delta}}\\
&\leq (C\| \nabla_{g_{\delta}}\varphi\|_{L^{n}}^2+2)\int_M|\nabla_{g_{\delta}} v_ {\delta}|^2d\mu_{g_ {\delta}}\\
&\leq C\int_M|\nabla_{g_{\delta}} v_ {\delta}|^2d\mu_{g_ {\delta}},
\end{align*}
where we used that fact that $v_\delta$ is asymptotical to zero and thus the Sobolev inequality  $||v_\delta||_{L^{2n/n-2}(M)}\le C||\nabla v_\delta||_{L^2(M)}$ holds for $v_\delta$.
Similarly, we have
\[\int_M|\nabla_{g_{\delta}} (\varphi^2 v_ {\delta})|^2d\mu_{g_ {\delta}}\leq C\int_M|\nabla_{g_{\delta}} v_ {\delta}|^2d\mu_{g_ {\delta}}.\]
Hence we arrive at 
\begin{equation*}\int_M|\nabla_{g_{\delta}}  v_ {\delta}|^2d\mu_{g_ {\delta}}\leq C,\forall \delta\in(0,\delta_0).
\end{equation*}
Using Sobolev inequality again, we have
\begin{equation*}
\int_M v_ {\delta}^{\frac{2n}{n-2}}d\mu_{g_ {\delta}}\leq C,\forall \delta\in(0,\delta_0).
\end{equation*}
Therefore we have that for any compact $A\subset M$,
\[\int_A u_{\delta}^\frac{2n}{n-2}d\mu_{g_{\delta}}\leq C(A),\forall \delta\in(0,\delta_0).\]

Now we begin the proof of (2).  We multiply $u_\delta$ to both sides of equation (\ref{eq6.2}) and integrate it by parts, then we have
\begin{align*}
-\int_M|\nabla u_\delta|^2d\mu_{g_\delta}+\lim_{r\to\infty}\int_{S_r}u_\delta\frac{\partial u_\delta}{\partial r}d\mu_{g_\delta}-c_n\int_M \varphi^2 R_{g_\delta} u_\delta^2d\mu_{g_\delta}=0.
\end{align*}
Since $u_\delta=1+\frac{A_\delta}{r^{n-2}}+\omega$, where $\omega=O(r^{1-n})$ and $|\partial\omega|=O(r^{-n})$, we have
\[\lim_{r\to\infty}\int_{S_r}u_\delta\frac{\partial u_\delta}{\partial r}d\mu_{g_\delta}=(2-n)\omega_nA_\delta.\]
Thus we get the required result.
\end{proof}
Now, we define the conformal metrics
\[\tilde g_\delta=u_\delta^{\frac{4}{n-2}}g_\delta\]
Then the standard conformal transformation formula shows
\begin{align*}\tilde R_{g_\delta}&=-c_n^{-1}u_\delta^{-\frac{n+2}{n-2}}(\Delta_{g_\delta} u_\delta-c_nR_{g_\delta} u_\delta)\\
&=u_\delta^{1-\frac{n+2}{n-2}}(R_{g_\delta}-\varphi^2 R_{g_\delta})\\
&\geq 0,
\end{align*}
since $\varphi=1$ on $K$ and $R_{g_\delta}=R(g)\geq0$ on $M\backslash K$.
\begin{lemma}\label{lm6.3}
The lower limit of mass of $g_\delta$ is no less than the lower limit of mass of $\tilde g_\delta$.
\end{lemma}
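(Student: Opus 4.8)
The plan is to compute the ADM mass of the conformal metric $\tilde g_\delta = u_\delta^{4/(n-2)}g_\delta$ in terms of $m(g_\delta)$ and the constant $A_\delta$ from Corollary \ref{cor6.2}, and then to show that $A_\delta$ is bounded above by a quantity tending to $0$ with $\delta$. Since $g_\delta = g$ (the fixed asymptotically flat reference metric) outside the compact set $K$, the expansion $u_\delta = 1 + A_\delta r^{2-n}+\omega$ with $\omega = O(r^{1-n})$, $|\partial\omega| = O(r^{-n})$ gives $u_\delta^{4/(n-2)} = 1 + \tfrac{4}{n-2}A_\delta r^{2-n} + O(r^{1-n})$ near infinity, with matching derivative decay. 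Substituting $\tilde g_{\delta,ij} = u_\delta^{4/(n-2)}g_{\delta,ij}$ into the definition of the mass, the terms $g_{\delta,ij}-\delta_{ij} = O(r^{-\delta})$ (with $\delta > (n-2)/2$), the products $r^{2-n}(g_{\delta,ij}-\delta_{ij})$, and the $\omega$-terms all contribute boundary integrals over $S_r$ that vanish as $r\to\infty$, while the $\tfrac{4}{n-2}A_\delta r^{2-n}\delta_{ij}$ part contributes a fixed positive multiple of $A_\delta$. A direct computation yields
\[
m(\tilde g_\delta) = m(g_\delta) + 2A_\delta ;
\]
only the positivity of the coefficient of $A_\delta$ is needed below.

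To bound $A_\delta$, recall from Proposition \ref{prop6.3}(2) that
\[
A_\delta = \frac{1}{(2-n)\omega_n}\int_M\big(|\nabla_{g_\delta}u_\delta|^2 + c_n\varphi^2 R_{g_\delta}u_\delta^2\big)d\mu_{g_\delta},
\]
and since $2-n<0$ it suffices to bound the integral from below. The gradient term is nonnegative. For the curvature term, note $\varphi u_\delta\in C_0^\infty(M)$, so that Lemma \ref{lm2.2} applied with $u = \varphi u_\delta$, together with the weak nonnegativity $\langle R_g,(\varphi u_\delta)^2\rangle\ge 0$ from Lemma \ref{lm4.1}, gives
\[
c_n\int_M\varphi^2 R_{g_\delta}u_\delta^2\,d\mu_{g_\delta} = c_n\langle R_{g_\delta},(\varphi u_\delta)^2\rangle \ge c_n\langle R_g,(\varphi u_\delta)^2\rangle - C\Psi(\delta)\int_M|\nabla_{g_\delta}(\varphi u_\delta)|^2\,d\mu_{g_\delta} \ge -C\Psi(\delta)\int_M|\nabla_{g_\delta}(\varphi u_\delta)|^2\,d\mu_{g_\delta}.
\]
Exactly as in the proof of Proposition \ref{prop6.3}(1), $\int_M|\nabla_{g_\delta}v_\delta|^2 d\mu_{g_\delta}$ (hence $\int_M|\nabla_{g_\delta}u_\delta|^2 d\mu_{g_\delta}$) and $\int_{\mathrm{supp}\,\varphi}u_\delta^2\,d\mu_{g_\delta}$ are bounded uniformly in $\delta\in(0,\delta_0)$, so $\int_M|\nabla_{g_\delta}(\varphi u_\delta)|^2 d\mu_{g_\delta}\le C$ uniformly. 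Hence $\int_M(|\nabla_{g_\delta}u_\delta|^2 + c_n\varphi^2 R_{g_\delta}u_\delta^2)d\mu_{g_\delta}\ge -C\Psi(\delta)$, and therefore $A_\delta\le C\Psi(\delta)$ with $C\Psi(\delta)\to 0$ as $\delta\to 0$.

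Combining the two steps, $m(\tilde g_\delta)\le m(g_\delta) + 2C\Psi(\delta)$, and taking lower limits gives $\liminf_{\delta\to 0}m(\tilde g_\delta)\le\liminf_{\delta\to 0}m(g_\delta)$, which is the claim. The main obstacle is the first step — carrying out the conformal transformation of the mass integral and verifying that all cross terms and $\omega$-contributions vanish in the $r\to\infty$ limit under the decay assumption $\delta>(n-2)/2$; the second step is a routine reassembly of Lemmas \ref{lm2.2} and \ref{lm4.1} with the uniform estimates already established in Proposition \ref{prop6.3}.
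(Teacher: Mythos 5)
Your proposal follows essentially the same route as the paper: compute the ADM mass of the conformal metric as $m(\tilde g_\delta)=m(g_\delta)+cA_\delta$ with $c>0$, then use Lemma~\ref{lm2.2} applied to $\varphi u_\delta$ together with the weak nonnegativity from Lemma~\ref{lm4.1} and the uniform bounds from Proposition~\ref{prop6.3} to show $\limsup_{\delta\to0^+}A_\delta\le 0$. The paper's proof is the same: it cites Miao for the conformal-mass formula (the paper writes $(n-1)A_\delta$ where you compute $2A_\delta$; your value is the one a direct expansion of the Schwarzschild-type boundary term produces, while the paper appears to have carried Miao's $n=3$ coefficient $2=n-1$ over literally — either way the coefficient is positive, which is the only thing either argument uses), and the paper reproduces the same inequality you derive, namely the estimate following from \eqref{eq_1} and Proposition~\ref{prop6.3}(1). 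So this is a correct proof taking the paper's approach, with a minor and immaterial disagreement about the numerical constant in \eqref{eq6.8}.
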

\begin{proof}
By the definition of mass, we can calculate straightforwardly and get the following equation (see \cite[Lemma4.2]{Miao2002}),
\begin{equation}\label{eq6.8}
m(\tilde g_\delta)=m(g_\delta)+(n-1)A_\delta
\end{equation}
By (2) of Proposition \ref{prop6.3}
\[A_\delta=\frac{1}{(2-n)\omega_n}\int_M \big(|\nabla_{g_\delta}u_\delta|^2+c_n\varphi^2 R_{g_\delta} u_\delta^2\big)d\mu_{g_\delta}\]
We calculate as inequality (\ref{eq6.5}), and get 
\begin{equation}\label{eq_1}\int_M c_n\varphi^2 R_{g_\delta} u_\delta^2d\mu_{g_\delta}\geq -C\epsilon\left(\int_M|\nabla u_\delta |^2d\mu_{g_\delta}+\left(\int_{{\rm{supp}}\varphi\backslash K}|u_\delta|^{\frac{2n}{n-2}}d\mu_{g_\delta}\right)^{\frac{n-2}{n}}\right).
\end{equation}
By Proposition \ref{prop6.3} (1), we have that for any $\epsilon>0$, there exists $\delta_0>0$, such that
\[\int_Mc_n\varphi^2R_{g_\delta} u_\delta^2d\mu_{g_\delta}\geq-C\epsilon\int_M|\nabla u_\delta|^2d_{\mu_{g_\delta}}-C\epsilon,\forall\delta\in(0,\delta_0).\]
Therefore we have
\[\uplim_{\delta\to0^{+}}A_\delta\leq0,\]
and thus 
\[\lowlim_{\delta\to0^{+}} m(g_\delta)\geq\lowlim_{\delta\to0^{+}} m(\tilde g_\delta).\]
\end{proof}
Now we can prove the inequality part of Theorem \ref{thm1.2}
\begin{proof}[proof of the inequality part of Theorem \ref{thm1.2}]
Since $g_\delta=g$ on $M\backslash K$, we have
\[m(g_\delta)=m(g)\]
Since $\tilde g_\delta$ has nonnegative scalar curvature, by the classical positive mass theorem, we have 
\[m(\tilde g_\delta)\geq0\]
Thus by Lemma \ref{lm6.3}, we have 
\[m(g)\geq0\]
\end{proof}

\section{Positive mass Theorem: Rigidity}
In this section, we will prove the rigidity part of Theorem \ref{thm1.2} when $m(g)=0$. 
Let us outline the idea of the proof. First, we will follow the idea of Shi-Tam \cite{ShiTam} to show that the manifold is Ricci flat away from $\Sigma$.  Then we will show that the manifold has nonnegative Ricci curvature in RCD sense. Noting that the manifold is asymptotically flat, by volume convergence and volume comparison, we get the rigidity result.  

\subsection{Ricci flat away from the singular set}
Let us  first prove that $m(g)=0$ implies Ricci curvature vanishing away from the singular set.

\begin{lemma}\label{lm6.6}
Assume as Theorem \ref{thm1.2}. If $m(g)=0$, then $\Ric_g\equiv0$ on $M\backslash\Sigma$.
\end{lemma}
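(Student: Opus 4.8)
\textbf{Proof proposal for Lemma \ref{lm6.6}.}

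The plan is to argue by contradiction, following the strategy of Shi--Tam \cite{ShiTam}: if $\Ric_g$ does not vanish identically on $M\setminus\Sigma$, one can perturb the metric $g$ inside a small ball away from $\Sigma$ to strictly decrease the mass, contradicting the nonnegativity part of Theorem \ref{thm1.2} already established. Concretely, suppose $\Ric_g(p)\neq 0$ at some $p\in M\setminus\Sigma$. Choose a small geodesic ball $B=B_\rho(p)\subset M\setminus\Sigma$ on which $g$ is smooth, and on which we have freedom to deform. Since the ADM mass of a smooth asymptotically flat metric, after a conformal change to impose $R\geq 0$, is controlled by the quantity $A_\delta$ appearing in Proposition \ref{prop6.3}(2), the key is to produce a one-parameter family $g(t)$ of metrics agreeing with $g$ outside $B$, with $g(0)=g$, such that the associated masses satisfy $\frac{d}{dt}\big|_{t=0} m(g(t)) < 0$. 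The classical computation (Schoen--Yau, and as used in \cite{ShiTam, Miao2002}) shows that the first variation of the mass under $g(t) = g - t\,\Ric_g$ (localized by a cutoff supported in $B$) is, up to a positive constant, $-\int_B |\Ric_g|^2\,d\mu_g < 0$, after using $R_g \equiv 0$ near $p$ — which itself follows from the already-proved fact (the commented-out lemma, which we will need to include) that $m(g)=0$ forces $R_g\equiv 0$ on $M\setminus\Sigma$.

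So the argument naturally splits into two steps. \emph{Step 1: $R_g\equiv 0$ on $M\setminus\Sigma$.} This is the content of the contradiction argument sketched in the commented-out block: if $R_g(p)>0$ somewhere, one arranges the cutoff $\varphi$ in the conformal equation \eqref{eq6.2} to equal $1$ on a neighborhood $U$ of $p$ where $R_g>R_g(p)/2$, uses the uniform $W^{1,2}$ bound on $v_\delta = u_\delta-1$ (Proposition \ref{prop6.3}(1) and the estimates preceding it), and a lower bound $\inf_U u_\delta \geq C_0>0$ obtained via an average-value argument, to conclude $A_\delta \leq -c\,R_g(p) + C\epsilon$ for small $\delta$; taking $\epsilon = R_g(p)/2$ gives $A_{\delta_0}<0$ for some $\delta_0$, hence $m(\tilde g_{\delta_0}) = m(g) + (n-1)A_{\delta_0} < 0$ by \eqref{eq6.8}, contradicting the classical positive mass theorem applied to $\tilde g_{\delta_0}$. \emph{Step 2: upgrade $R_g\equiv 0$ to $\Ric_g\equiv 0$.} With scalar-flatness in hand near $p$, perform the trace-free perturbation $g(t)=g-t\chi\,\Ric_g$ with $\chi$ a bump function supported in $B_\rho(p)\Subset M\setminus\Sigma$; because $g$ is smooth on $B$, this is a smooth family of asymptotically flat metrics, and one computes $R_{g(t)} = -t\,\chi|\Ric_g|^2 + O(t^2) + (\text{terms with derivatives of }\chi)$, so after a further conformal correction (again via \eqref{eq6.2}, now applied to the smooth metrics $g(t)$ directly, no mollification needed since $g(t)$ is already smooth on the relevant region and equals $g$ elsewhere) one gets a scalar-nonnegative metric whose mass is $m(g) + (n-1)A(t)$ with $A(t) = c\, t\int |\Ric_g|^2\chi\,d\mu_g + o(t) >0$ for small $t>0$ unless $\Ric_g\equiv 0$ on $\mathrm{supp}\,\chi$ — but then $m(\text{conformal metric}) < m(g) = 0$ is impossible. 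Letting $\rho$ and the support of $\chi$ vary shows $\Ric_g\equiv 0$ on all of $M\setminus\Sigma$.

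The main obstacle I expect is the bookkeeping in Step 2: one must make sure the cutoff $\chi$ can be chosen so that the derivative-of-$\chi$ contributions to $R_{g(t)}$ (which are $O(t)$ but of indefinite sign and supported in the annular region where $\chi$ transitions) do not overwhelm the good term $-t\chi|\Ric_g|^2$. The standard device, as in \cite{ShiTam, Schoen1979}, is first to localize near a point $p$ where $|\Ric_g(p)|$ is as large as possible and then rescale so that on the ball $B_\rho(p)$ the metric is $C^2$-close to Euclidean, making $|\Ric_g|$ essentially constant on $B_\rho(p)$ while $\||\nabla\chi|\|$ contributes a lower-order amount; alternatively one uses the standard fact that a scalar-flat metric which is not Ricci-flat can be deformed (keeping it asymptotically flat and agreeing with $g$ outside a compact set) to one with $R\geq 0$, $R\not\equiv 0$, which by Step 1's argument already has negative mass after the conformal change. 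Either route reduces Step 2 to Step 1, so once the conformal-change machinery and the uniform estimates on $u_\delta$ are correctly set up, the lemma follows.
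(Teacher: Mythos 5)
Your overall strategy --- perturb $g$ by $g(t)=g-t\chi\,\Ric_g$ on a ball in $M\setminus\Sigma$, conformally correct to nonnegative scalar curvature, and derive a contradiction with the classical positive mass theorem --- does match the heart of the paper's argument. But there are two genuine gaps.

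First, the claim in Step~2 that ``no mollification [is] needed since $g(t)$ is already smooth on the relevant region and equals $g$ elsewhere'' is wrong. Since $g(t)=g-t\chi\Ric_g$ agrees with $g$ near $\Sigma$, it inherits the singularity of $g$ there, so $R_{g(t)}$ is still only defined distributionally near $\Sigma$ and the classical PMT cannot be applied to any conformal modification of $g(t)$ directly. This is exactly why the paper perturbs the \emph{mollified} metric, defining $g_{\delta;t}=g_\delta-t\psi\Ric_g$, and then has to rerun the whole two-parameter approximation machinery (Lemma~\ref{lm2.2}, Lemma~\ref{lm4.1}, uniform $W^{1,2}$ bounds on $v_{\delta;t}=u_{\delta;t}-1$, etc.) before letting $\delta\to 0$ and choosing $t$; controlling all error terms uniformly in both parameters is the bulk of the real work and is absent from your sketch. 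Second, your two-step route (first $R_g\equiv 0$, then $\Ric_g\equiv 0$) is a detour the paper avoids: the paper goes directly to $\Ric_g\equiv 0$ using only the distributional nonnegativity of $R_g$ that Lemma~\ref{lm4.1} already gives, and handles the potential degeneracy of $u_{\delta;t}$ not by an $\inf_U u_{\delta;t}\geq C_0$ lower bound (which your Step~1 would need and which is not established by the averaging argument you gesture at) but by a clean dichotomy on $\fint_{B_r(p)}v_{\delta;t}^2$. Finally, your linearization $R_{g(t)}=-t\chi|\Ric_g|^2+\cdots$ has the wrong sign: the first variation of $R$ in the direction $-\chi\Ric_g$ contributes $+t\chi|\Ric_g|^2$ (cf.\ equation~\eqref{Re} of the paper), which increases the integral defining $A_{\delta;t}$, making $A_{\delta;t}$ \emph{more negative} (not positive, as you wrote), hence $m(\tilde g_{\delta;t})=m(g)+(n-1)A_{\delta;t}<0$. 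The conclusion you want ($A<0$) is opposite to the $A(t)>0$ you asserted, so the signs in your write-up do not close up consistently.
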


\begin{proof}
Suppose that there exists a point $p\in M\backslash \Sigma$ such that $\Ric_g(p)\not=0$, and let $U\subset M\backslash\Sigma$ be a neighborhood of $p$ such that $|\Ric_g|^2\geq \frac{|\Ric_g|^2(p)}{2}$ on $U$. When we construct
the mollification metric, we can take $K$ small enough such that $U\subset M\backslash K$. And when we construct the
cut-off function $\varphi$, we make an additional requirement that $\varphi=1$ on U. We let $\psi\in C_0^{\infty}(U)$ be a cut-off function such that $0\leq\psi\leq1$ and $\psi=1$ on $B_r(p)$, where $r$ is some positive constant such that $B_r(p)\subset U$. Denote $g_{\delta;t}=g_\delta-t\psi \Ric_g$, $g_t=g-t\psi  \Ric_g$, $t\geq 0$, and denote $R_{g_{\delta;t}}$, $R_{g_\delta}$, $R_{g_t}$ as the scalar curvature of $g_{\delta;t}$, $g_\delta$, $g_t$ respectively. Then we have
\[R_{g_{\delta;t}}=R_{g_\delta}-t\Div_{g_{\delta;t}}(\Div_{g_{\delta;t}}(\psi \Ric_g))+t\Delta_{g_{\delta;t}}{\rm{tr}}_{g_{\delta;t}}(\psi \Ric_g)+t\langle \psi \Ric_g,\Ric_{g_{\delta;t}}\rangle_{g_{\delta;t}}+h_\delta,\]
where $|h_\delta|\leq Ct^2$ and ${\rm{supp } } h_\delta\subset U$. Here and below, $C$ and $C_i$ will denote some positive constant depend on $n,g,r$ and are independent of $\delta$, $t$ and $C$ can vary from line to line.

Since $g_\delta=g$ on $U$, we have 
\begin{align}\label{Re}
R_{g_{\delta;t}}=R_{g_\delta}-t\Div_{g_t}(\Div_{g_t}(\psi \Ric_g))+t\Delta_{g_t}{\rm{tr}}_{g_t}(\psi \Ric_g)+t\langle \psi \Ric_g,\Ric_{g_t}\rangle_{g_t}+h,
\end{align}
where $h$ is independent of $\delta$, $|h|\leq Ct^2$ and ${\rm{supp}} h\subset U$.

Let $u_{\delta;t}$ be the solution to equation
\begin{align}\label{eq6.211}\left\{
\begin{array}{rl}
&\Delta_{g_{\delta;t}} u_{\delta;t}-c_n \varphi^2R_{g_{\delta;t}} u_{\delta;t}=0 \quad on \quad M\\
&\lim_{r\to\infty}u_{\delta;t}=1 
\end{array} \right., 
\end{align}
Then the metric $\tilde g_{\delta;t}=u_{\delta;t}^\frac{4}{n-2}g_{\delta;t}$ is $C^2$ on $M$ and has nonnegative scalar curvature. And we have
\begin{equation}
m(\tilde g_{\delta;t})=m(g_{\delta;t})+(n-1)A_{\delta;t},
\end{equation}
where $m(g_{\delta;t})=m(g)=0$, and $A_{\delta;t}=\frac{1}{(2-n)\omega_n}\int_M \left(|\nabla_{g_{\delta;t}} u_{\delta;t} |^2+c_n\varphi^2 R_{g_{\delta;t}}u_{\delta;t}^2\right)d\mu_{g_{\delta;t}}$.
Since $\varphi=1$ on $U$, by (\ref{Re}), we have
\begin{align}\label{ieq6.14}
\int_M\varphi^2 R_{g_{\delta;t}}u_{\delta;t}^2 d\mu_{g_{\delta;t}}=&\int_M \varphi^2 R_{g_\delta} u_{\delta;t}^2 d\mu_{g_{\delta;t}}-t\int_U   u_{\delta;t}^2 \Div_{g_t}(\Div_{g_t}(\psi \Ric_g))d\mu_{g_t}+t\int_U   u_{\delta;t}^2 \Delta_{g_t}{\rm{tr}}_{g_t}(\psi \Ric_g)d\mu_{g_t}\notag\\
&+t\int_U   u_{\delta;t}^2 \langle \psi \Ric_g,\Ric_{g_t}\rangle_{g_t}d\mu_{g_t}+\int_U h u_{\delta;t}^2 d\mu_{g_t}
\end{align}
As the same of Corollary \ref{cor6.2}, by \cite[Lemma 3.1]{Schoen1979}, there exists a constant $C>0$, such that for any $\xi\in C_0^{\infty}(M)$, the Sobolev inequality
\[\left(\int_M \xi^{\frac{2n}{n-2}}d\mu_g\right)^{\frac{n-2}{n}}\leq C \int_M|\nabla \xi|^2d\mu_g\]
holds. Thus by Lemma \ref{lm2.2}, we know that for any $\epsilon>0$, there exists $\delta_0>0$, such that 
\begin{align*}
|\langle R_{g_\delta},\xi^2 \rangle-\langle R_g,\xi^2 \rangle|\leq \epsilon\int_M|\nabla \xi |^2d\mu_{g_\delta},\forall \delta\in (0,\delta_0).
\end{align*}
Since Lemma \ref{lm4.1} gives  
\[\langle R_g,\xi^2 \rangle\geq 0,\]
we have
\[\langle R_{g_\delta},\xi^2 \rangle\geq -\epsilon\int_M|\nabla \xi |^2d\mu_{g_\delta},\forall \delta\in (0,\delta_0).\]
Thus we can fix some $t_0>0$, such that for any $\epsilon>0$, there exists $\delta_0>0$, such that for any $\delta\in(0,\delta_0)$, $t\in(0,t_0)$, we have
\begin{align*}
\int_M  \varphi^2 R_{g_\delta} u_{\delta;t}^2d\mu_{g_{\delta;t}}
&=   \langle R_{g_\delta},\varphi^2 u_{\delta;t}^2\rangle  \notag\\
&\geq -C\epsilon\int_M|\nabla (\varphi u_{\delta;t}) |^2d\mu_{g_\delta}\notag\\
&=-C\epsilon\left(\int_M|\varphi\nabla  u_{\delta;t} |^2d\mu_{g_\delta}+\int_M| u_{\delta;t}\nabla \varphi |^2d\mu_{g_\delta}\right)\notag\\
&\geq -C\epsilon\left(\int_M|\nabla  u_{\delta;t} |^2d\mu_{g_\delta}+\int_{{\rm{supp}\varphi}\backslash K}| u_{\delta;t}|^2d\mu_{g_\delta}\right)\notag\\
&\geq -C\epsilon\left(\int_M|\nabla  u_{\delta;t} |^2d\mu_{g_\delta}+\left(\int_{{\rm{supp}\varphi}\backslash K}| u_{\delta;t}|^{\frac{2n}{n-2}}d\mu_{g_\delta}\right) ^{\frac{n-2}{n}}\right)
\end{align*}

Thus there exists $\delta_0>0$ and $t_0>0$, such that for any $\delta\in(0,\delta_0)$, $t\in(0,t_0)$, we have
\begin{align}\label{eqR2}
\int_M \varphi^2R_{g_{\delta}} u_{\delta;t}^2 d\mu_{g_{\delta;t}}\geq -\frac{1}{10}\int_M|\nabla_{g_{\delta;t}} u_{\delta;t} |^2d\mu_{g_{\delta;t}}-a_\delta \left(\int_{{\rm{supp}\varphi}\backslash K}|u_{\delta;t}|^{\frac{2n}{n-2}}d\mu_{g_{\delta;t}}\right)^{\frac{n-2}{n}},
\end{align}
where $a_\delta$ is a function of $\delta$ which is independent of $t$ and $\lim_{\delta\to0}a_\delta=0$.

Now we will prove that there exists $\delta_0>0$ and $t_0>0$, such that for any compact measurable set $A\subset M$, we have $\int_A u_{\delta;t}^\frac{2n}{n-2}d\mu_{g_{\delta;t}}\leq C(A)$, for all $\delta\in(0,\delta_0)$, $t\in(0,t_0)$.

Let $v_{\delta;t}=u_{\delta;t}-1$, then we have
\begin{equation}\label{eq6.13}
\int_M|\nabla_{g_{\delta;t}} v_ {\delta;t}|^2d\mu_{g_ {\delta;t}}=-\int_M c_n \varphi^2 R_{g_{\delta;t}} v_ {\delta;t}^2d\mu_{g_{\delta;t}}-\int_M c_n\varphi^2R_{g_{\delta;t}} v_{\delta;t} d\mu_{g_ {\delta;t}}.
\end{equation}
We use equation (\ref{eq6.13}) to prove that $\int_M |\nabla_{g_{\delta;t}} v_ {\delta;t}|^2d\mu_{g_ {\delta;t}}$ is uniformly bounded at first. By Lemma \ref{lm2.2}, we have that for any $\epsilon>0$, there exists $\delta_0, t_0>0$ such that
\begin{align*}
|c_n\langle R_{g_ {\delta;t}},\varphi^2 v_ {\delta;t}^2\rangle-c_n\langle R_g,\varphi^2 v_ {\delta;t}^2\rangle|\leq \epsilon\int_M|\nabla_{g_{\delta;t}} (\varphi v_ {\delta;t})|^2d\mu_{g_ {\delta;t}},\forall \delta\in(0,\delta_0), t\in (0, t_0),
\end{align*}
\begin{align*}
|c_n\langle R_{g_ {\delta;t}},\varphi^2 v_ {\delta;t}\rangle-c_n\langle R_g,\varphi^2 v_ {\delta;t}\rangle|\leq \epsilon\left(\int_M|\nabla_{g_{\delta;t}} (\varphi^2 v_ {\delta;t})|^\frac{n}{n-1}d\mu_{g_ {\delta;t}}\right)^\frac{n-1}{n},\forall  \delta\in(0,\delta_0), t\in (0, t_0),
\end{align*}
where $c_n\langle R_g,\varphi^2 v_ {\delta;t}^2\rangle\geq0$, and
\begin{align*}c_n\langle R_g,\varphi^2 v_ {\delta;t}\rangle&=c_n\langle R_g,\varphi^2 (v_ {\delta;t}+1)\rangle-c_n\langle R_g,\varphi^2\rangle\\
&\geq0-c_n\int_M \left(-V\cdot \tilde\nabla \left(\varphi^2 \frac{d\mu_g}{d\mu_h}\right)+F\varphi^2 \frac{d\mu_g}{d\mu_h}\right)d\mu_h\\
&\geq -C.
\end{align*}
Thus by equation (\ref{eq6.13}) we have
\begin{align*}\int_M |\nabla_{g_{\delta;t}} v_ {\delta;t}|^2d{\mu_{g_ {\delta;t}}}&\leq \epsilon\int_M|\nabla_{g_{\delta;t}} (\varphi v_ {\delta;t})|^2d\mu_{g_ {\delta;t}}+\epsilon\left(\int_M|\nabla_{g_{\delta;t}} (\varphi^2 v_ {\delta;t})|^\frac{n}{n-1}d\mu_{g_ {\delta;t}}\right)^\frac{n-1}{n}+C\\
&\leq \epsilon\int_M|\nabla_{g_{\delta;t}} (\varphi v_ {\delta;t})|^2d\mu_{g_ {\delta;t}}+C\epsilon \left(\int_{\rm{supp}\varphi}|\nabla_{g_{\delta;t}}(\varphi^2v_ {\delta;t})|^2d\mu_{g_ {\delta;t}}\right)^{\frac{1}{2}}+C\\
&\leq \epsilon\int_M|\nabla_{g_{\delta;t}} (\varphi v_{\delta;t})|^2d\mu_{g_ {\delta;t}}+C\epsilon\left(\int_M |\nabla_{g_{\delta;t}}(\varphi^2 v_ {\delta;t})|^2d\mu_{g_ {\delta;t}}+1\right)+C.
\end{align*}
Since
\begin{align*}
\int_M|\nabla_{g_{\delta;t}} (\varphi v_ {\delta;t})|^2d\mu_{g_ {\delta;t}}
&\leq 2\int_M|\nabla_{g_{\delta;t}}\varphi|^2v_ {\delta;t}^2d\mu_{g_ {\delta;t}}+2\int_M\varphi^2|\nabla_{g_{\delta;t}} v_ {\delta;t}|^2d\mu_{g_ {\delta;t}}\\
&\leq 2\| \nabla_{g_{\delta;t}}\varphi\|_{L^{{n}}}^2\|v_ {\delta;t}\|_{L^\frac{2n}{n-2}}^2+2\int_M|\nabla_{g_{\delta;t}} v_ {\delta;t}|^2d\mu_{g_ {\delta;t}}\\
&\leq (C\| \nabla_{g_{\delta;t}}\varphi\|_{L^{{n}}}^2+2)\int_M|\nabla_{g_{\delta;t}} v_ {\delta;t}|^2d\mu_{g_ {\delta;t}}\\
&\leq C\int_M|\nabla_{g_{\delta;t}} v_ {\delta;t}|^2d\mu_{g_ {\delta;t}},
\end{align*}
and similarly 
\[\int_M|\nabla_{g_{\delta;t}} (\varphi^2 v_ {\delta;t})|^2d\mu_{g_ {\delta;t}}\leq C\int_M|\nabla_{g_{\delta;t}} v_ {\delta;t}|^2d\mu_{g_ {\delta;t}},\]
we have that there exists $ t_0>0$, such that 
\begin{equation*}\int_M|\nabla_{g_{\delta;t}}  v_ {\delta;t}|^2d\mu_{g_ {\delta;t}}\leq C,\forall \delta\in(0,\delta_0),  t\in(0, t_0).
\end{equation*}
Thus by Sobolev inequality, we have
\begin{equation*}
\int_M v_ {\delta;t}^{\frac{2n}{n-2}}d\mu_{g_ {\delta;t}}\leq C,\forall \delta\in(0,\delta_0), t\in(0,t_0).
\end{equation*}
Therefore we have that for any compact measurable set $A\subset M$,
\begin{equation}\label{uLp}\int_A u_{\delta;t}^\frac{2n}{n-2}d\mu_{g_{\delta;t}}\leq C(A),\forall \delta\in(0,\delta_0), t\in(0,t_0).
\end{equation}
Thus (\ref{eqR2}) becomes
\begin{align}\label{ieq6.21}
\int_M \varphi^2R_{g_{\delta}} u_{\delta;t}^2 d\mu_{g_{\delta;t}}\geq -\frac{1}{10}\int_M|\nabla_{g_{\delta;t}} u_{\delta;t} |^2d\mu_{g_{\delta;t}}-b_\delta ,
\end{align}
where $b_\delta$ is a function of $\delta$ which is independent of $t$ and $\lim_{\delta\to0}b_\delta=0$.

Then, by integration by parts and Cauchy inequality, we have
\begin{align}\label{ieq6.22}
-t\int_U   u_{\delta;t}^2 \Div_{g_t}(\Div_{g_t}(\psi \Ric_g))d\mu_{g_t}&=
t\int_U  \langle\Div_{g_t}(\psi \Ric_g),\nabla_{g_t} u_{\delta;t}^2\rangle d\mu_{g_t}\notag\\
&\geq -t\int_U  \left|\Div_{g_t}(\psi \Ric_g)\right|\cdot|\nabla_{g_t}u_{\delta;t}^2|d\mu_{g_t}\notag\\
&\geq -Ct\int_U 2u_{\delta;t}|\nabla_{g_t} u_{\delta;t}|d\mu_{g_t}\notag\\
&\geq -\frac{1}{10}\int_M|\nabla_{g_t} u_{\delta;t}|^2d\mu_{g_t}-Ct^2 \int_U u_{\delta;t}^2d\mu_{g_t}.
\end{align}

Since $\Delta_{g_t}{\rm{tr}}_{g_t}(\psi \Ric_g)=\Div_{g_t}({\rm{tr}}_{g_t}\nabla_{g_t}(\psi \Ric_g))$, similarly we have
\begin{align}\label{ieq6.222}
t\int_U   u_{\delta;t}^2 \Delta_{g_t}{\rm{tr}}_{g_t}(\psi \Ric_g)d\mu_{g_t} \geq -\frac{1}{10}\int_M|\nabla_{g_t} u_{\delta;t}|^2d\mu_{g_t}-Ct^2 \int_U u_{\delta;t}^2d\mu_{g_t}.
\end{align}
Moreover, we have that there exists $t_0>0$, such that
\begin{align}\label{ieq6.23}
\int_U \langle\psi \Ric_g,\Ric_{g_t}\rangle_{g_t}u_{\delta;t}^2d\mu_{g_t}\geq \frac{|\Ric_g|^2(p)}{4}\int_U \psi  u_{\delta;t}^2d\mu_{g_t}, \forall\delta\in(0,\delta_0),  t\in(0,t_0).
\end{align}
By (\ref{ieq6.14}), (\ref{ieq6.21}), (\ref{ieq6.22}), (\ref{ieq6.222}) and (\ref{ieq6.23}), we have that there exists $t_0>0$, such that
\begin{align*}
\int_M\varphi^2 R_{g_{\delta;t}}u_{\delta;t}^2 d\mu_{g_{\delta;t}}\geq  -\frac{1}{2}\int_U |\nabla_{g_t} u_{\delta;t}|^2d\mu_{g_t}+Ct\int_U \psi u_{\delta;t}^2d\mu_{g_t}-Ct^2\int_U u_{\delta;t}^2d\mu_{g_t}, \forall \delta\in(0,\delta_0), t\in(0,t_0),
\end{align*}
and then we have
\begin{align}\label{ieqA2}
A_{\delta,t}&\leq  \frac{1}{(2-n)\omega_n}\left(\frac{1}{10}\int_M|\nabla_{g_{\delta,t}} u_{\delta;t}|^2d\mu_{g_{\delta,t}}+C_0t\int_U\psi u_{\delta;t}^2d\mu_{g_{t}}-C_1t^2\int_U u_{\delta;t}^2d\mu_{g_{t}}\right)+b_\delta\notag\\ 
&\leq  \frac{1}{(2-n)\omega_n}\left(\frac{1}{10}\int_M|\nabla_{g_{\delta,t}} u_{\delta;t}|^2d\mu_{g_{\delta,t}}+C_0t\int_U\psi u_{\delta;t}^2d\mu_{g_{t}}-C_2t^2\right)+b_\delta\notag \\ 
&=  \frac{1}{(2-n)\omega_n}\left(\frac{1}{10}\int_M|\nabla_{g_{\delta,t}} v_{\delta;t}|^2d\mu_{g_{\delta,t}}+C_0t\int_U\psi u_{\delta;t}^2d\mu_{g_{t}}-C_2t^2\right)+b_\delta\notag \\
&\leq  \frac{1}{(2-n)\omega_n}\left(C_3\left(\int_{B_r(p)}|v_{\delta;t}|^\frac{2n}{n-2}d\mu_{g_{\delta,t}}\right)^\frac{n-2}{n}+C_0t\int_{B_r(p)} u_{\delta;t}^2d\mu_{g_{t}}-C_2t^2\right)+b_\delta\notag \\
&\leq  \frac{1}{(2-n)\omega_n}\left(C_4\fint_{B_r(p)}|v_{\delta;t}|^2d\mu_{g_{\delta,t}}+C_5t\fint_{B_r(p)}u_{\delta;t}^2d\mu_{g_{t}}-C_2t^2\right)+b_\delta
, \forall \delta\in(0,\delta_0), t\in(0,t_0),
\end{align}
where the second inequality follows from (\ref{uLp}) and H\"older inequality.

By H\"older inequality, we have
\begin{align*}
\fint_{B_r(p)}u^2_{\delta;t}d\mu_{g_{\delta;t}}&=\fint_{B_r(p)}(1+v_{\delta;t})^2d\mu_{g_{\delta;t}}\\
&=\fint_{B_r(p)}(1+v_{\delta;t}^2+2v_{\delta;t})d\mu_{g_{\delta;t}}\\
&\ge \fint_{B_r(p)}(1+v_{\delta;t}^2-\frac{1}{2}-2v^2_{\delta;t})d\mu_{g_{\delta;t}}\\
&=\frac{1}{2}-\fint_{B_r(p)}v^2_{\delta;t}d\mu_{g_{\delta;t}}.
\end{align*}

If $\fint_{B_r(p)}v^2_{\delta;t}d\mu_{g_{\delta;t}}\le\frac{1}{10}$, then we have $\fint_{B_r(p)}u^2_{\delta;t}d\mu_{g_{\delta;t}}\ge\frac{4}{10}$, thus by (\ref{ieqA2}), we have
\begin{align*}
A_{\delta;t}\le \frac{1}{(2-n)\omega_n}\left(C_4\fint_{B_r(p)}|v_{\delta;t}|^2d\mu_{g_{\delta,t}}+C_6t-C_2t^2\right)+b_\delta, \forall \delta\in(0,\delta_0), t\in(0,t_0).
\end{align*}
Thus we can choose $t$ small enough such that $C_6t-C_2t^2> 0$, and choose $\delta$ small enough such that $A_{\delta;t}< 0$, then we have $m(\tilde g_{\delta;t})<0$, which is a contradiction.

Otherwise, if $\fint_{B_r(p)}v^2_{\delta;t}d\mu_{g_{\delta;t}}\ge\frac{1}{10}$, then by (\ref{ieqA2}), we have
\begin{align*}
A_{\delta;t}\le \frac{1}{(2-n)\omega_n}\left(C_7-C_2t^2\right)+b_\delta, \forall \delta\in(0,\delta_0), t\in(0,t_0).
\end{align*}
We can still let $t$ and $\delta$ small enough such that $A_{\delta;t}< 0$ and thus $m(\tilde g_{\delta;t})<0$, which makes the contradiction again. Thus the proof of the lemma is completed.
\end{proof}

\subsection{RCD space with nonnegative Ricci curvature}
In this subsection, we will show that our singular space has nonnegative Ricci curvature in RCD sense providing the manifold is Ricci flat away from the singular set. In this subsection, $\Delta_g$ will denote the Dirichlet laplacian taken with respect to metric $g$, and we will denote its domain by $D(\Delta_g)$. We will omit the subscription when there is no ambiguity of the metric.

\begin{theorem}\label{t:RCDnonnegative}
	Let $(M^n,g)$ $(n\ge 3)$ be a smooth manifold with $g\in C^0\cap W^{1,p}_{loc}(M)$ with $n\le p\le\infty$. Assume $g$ is smooth and Ricci flat away from a closed subset $\Sigma$ with $\cH^{n-\frac{p}{p-1}}(\Sigma)<\infty$ when $n\le p<\infty$ 
	and $\cH^{n-1}(\Sigma)=0$ when $p=\infty$, and assume $g$ is asymptotically flat. Then $(M^n,g)$ as a metric measure space with Lebesgue measure has nonnegative Ricci curvature in the sense of RCD. 
\end{theorem}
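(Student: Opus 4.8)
The plan is to equip $M$ with the length distance $d_g$ of the continuous metric $g$ and the Riemannian volume measure $\mu_g$ (which is comparable to, and has the same null sets as, the Lebesgue measure $\mu_h$), and then to verify the Eulerian characterisation of $\mathrm{RCD}(0,n)$ for $(M,d_g,\mu_g)$: it is a complete separable geodesic metric measure space that is infinitesimally Hilbertian, whose Cheeger energy has intrinsic distance $d_g$, which has the Sobolev-to-Lipschitz property, and which satisfies the dimensional Bakry--\'Emery condition $\mathrm{BE}(0,n)$. Most of this is soft. The Cheeger energy $u\mapsto\int_M|\nabla u|_g^2\,d\mu_g$ is a strongly local, regular, quadratic Dirichlet form because $C^{-1}h\le g\le Ch$ for a fixed smooth metric $h$, and it is moreover locally doubling and supports a local $(1,2)$-Poincar\'e inequality (a PI space) by comparability with $(M,h)$ and asymptotic flatness; since the hypothesis on $\Sigma$ forces $\cH^{n-1}(\Sigma)=0$ in every admissible range of $p$, the continuity of $g$ across $\Sigma$ makes $\Sigma$ negligible for $W^{1,2}(M)$, so all differential computations reduce to the smooth, Ricci-flat open manifold $M\setminus\Sigma$; and the Sobolev-to-Lipschitz property together with the identification of the intrinsic distance is standard for a $C^0$ Riemannian metric. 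The real content is the verification of $\mathrm{BE}(0,n)$.

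Concretely, I would verify the weak Bochner inequality: for every $f\in D(\Delta_g)\cap\mathrm{Lip}(M)$ with $\Delta_gf\in W^{1,2}(M)$ and every nonnegative $\varphi\in D(\Delta_g)\cap\mathrm{Lip}(M)$ with $\Delta_g\varphi\in L^\infty$,
\[
\tfrac12\int_M|\nabla f|_g^2\,\Delta_g\varphi\,d\mu_g\ \ge\ \int_M\varphi\Big(\langle\nabla f,\nabla\Delta_gf\rangle+\tfrac1n(\Delta_gf)^2\Big)\,d\mu_g .
\]
On $M\setminus\Sigma$ interior elliptic regularity makes $f$ of class $W^{3,2}_{\mathrm{loc}}\cap C^{1,\alpha}_{\mathrm{loc}}$, and Bochner's formula together with $\Ric_g\equiv0$ and the elementary inequality $|\Hess f|^2\ge\tfrac1n(\Delta_gf)^2$ gives pointwise $\tfrac12\Delta_g|\nabla f|^2\ge\langle\nabla f,\nabla\Delta_gf\rangle+\tfrac1n(\Delta_gf)^2$ there. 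To globalise, I would integrate by parts against $\varphi$ and insert the cut-offs $\eta_\epsilon$ of $\Sigma$ from Lemma \ref{l:cut-off}, writing $\varphi=(1-\eta_\epsilon)\varphi+\eta_\epsilon\varphi$; the first part is supported in $M\setminus\Sigma$ and obeys the classical inequality, and one must show the $\eta_\epsilon$-part tends to $0$ as $\epsilon\to0$. For this I would use two a priori bounds: the global Hessian estimate $\int_{M\setminus\Sigma}|\Hess f|^2\,d\mu_g=\int_{M\setminus\Sigma}(\Delta_gf)^2\,d\mu_g<\infty$, obtained by integrating the Bochner identity over $M\setminus\overline{B_\epsilon(\Sigma)}$ and discarding the boundary terms on $\partial B_\epsilon(\Sigma)$ by the $W^{1,p}$-smallness of $g$ near $\Sigma$, via estimates of exactly the type used in the proof of Lemma \ref{lm4.1}; and a uniform interior gradient estimate for $\Delta_g$-harmonic functions on $M$, say $\sup_{B_r(x)}|\nabla u|_g\le C\,r^{-1}\sup_{B_{2r}(x)}|u-u(x)|$, proved by Moser iteration on $M$, legitimate because $\Sigma$ is $\mu_g$-null and, by Bochner and Ricci-flatness, $|\nabla u|^2$ is a subsolution on $M\setminus\Sigma$. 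Since $|\Sigma|=0$, dominated convergence gives $\|\Hess f\|_{L^2(B_\epsilon(\Sigma))}+\|\nabla f\|_{L^2(B_\epsilon(\Sigma))}\to0$, hence $\|\nabla(|\nabla f|^2)\|_{L^1(B_\epsilon(\Sigma))}\le2\|\Hess f\|_{L^2(B_\epsilon(\Sigma))}\|\nabla f\|_{L^2(B_\epsilon(\Sigma))}\to0$; paired with $\nabla\varphi\in L^\infty$ and with $\nabla\eta_\epsilon$ controlled in the dual exponent, this makes the $\Sigma$-concentrated error vanish, and the remaining terms converge to the two sides of the asserted inequality. The same estimate in caloric form legitimises the analogous manipulation for the heat semigroup $P_t^{g}$, which is the form of $\mathrm{BE}(0,n)$ actually invoked.

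With $\mathrm{BE}(0,n)$ and the structural properties in hand, I would conclude that $(M,d_g,\mu_g)$ is an $\mathrm{RCD}(0,n)$ space by the equivalence of the Bakry--\'Emery and curvature-dimension formulations for infinitesimally Hilbertian spaces (Ambrosio--Gigli--Savar\'e, Erbar--Kuwada--Sturm). The main obstacle is precisely the cross-$\Sigma$ analysis above: because $n-\tfrac{p}{p-1}$ may exceed $n-2$, the set $\Sigma$ need not be $W^{1,2}$-polar and cannot simply be removed from the Dirichlet form, so the Bochner defect concentrated on $\Sigma$ is not automatically zero; the compensating feature is that $g$ degenerates only mildly along $\Sigma$ ($C^0\cap W^{1,p}$ with the sharp Hausdorff condition), and the delicate point is to match this mild degeneracy with exactly the right a priori regularity of $f$, $\varphi$ and the Hessian $\Hess f$ so that all boundary and cut-off terms near $\Sigma$ provably vanish.
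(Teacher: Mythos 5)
Your overall strategy -- verify the weak Bochner inequality, exploiting the classical Bochner formula with $\Ric_g=0$ on $M\setminus\Sigma$ and then cutting off near $\Sigma$ -- is the same as the paper's (Proposition \ref{p:weakbochnerinequaliyt}). However, the way you propose to control the error concentrated near $\Sigma$ has two genuine gaps.

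First, the global Hessian identity $\int_{M\setminus\Sigma}|\Hess f|^2=\int_{M\setminus\Sigma}(\Delta_g f)^2$ is derived by integrating Bochner over $M\setminus\overline{B_\epsilon(\Sigma)}$ and ``discarding the boundary terms,'' but those boundary terms involve $\Hess f(\nabla f,\cdot)$ on $\partial B_\epsilon(\Sigma)$, i.e.\ exactly the quantity whose integrability near $\Sigma$ you are trying to establish. To kill them by the cut-off trick (pair $|\nabla\eta_\epsilon|\in L^{p/(p-1)}$ against the dual exponent, as in the proof of Lemma \ref{lm4.1}), you need $|\Hess f|\,|\nabla f|\in L^p$ near $\Sigma$, not merely $\Hess f\in L^2$. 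Nothing in your argument supplies this a priori; the reasoning is circular. Second, even granting $\Hess f\in L^2$, the estimate $\|\nabla(|\nabla f|^2)\|_{L^1(B_\epsilon(\Sigma))}\to 0$ by dominated convergence does not control $\int\varphi\,|\nabla\eta_\epsilon|\,|\nabla(|\nabla f|^2)|$: the factor $|\nabla\eta_\epsilon|$ is unbounded on $B_\epsilon(\Sigma)$, so an $L^1\times L^\infty$ pairing fails. You must instead put $\nabla(|\nabla f|^2)$ into $L^p$ so as to pair with $|\nabla\eta_\epsilon|\in L^{p/(p-1)}$, and then the $L^{p/(p-1)}$-smallness of $|\nabla\eta_\epsilon|$ does the work. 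The same issue lurks in ``Moser iteration on $M$ is legitimate because $\Sigma$ is $\mu_g$-null'': null measure is not the relevant notion; one needs a quantitative $W^{1,q}$-capacity type statement, which again requires integrability of second derivatives across $\Sigma$.

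The missing ingredient, and the route the paper actually takes, is the a priori $W^{2,q}$-regularity in Lemma \ref{l:aprioriW2q_harmonic}: by solving an auxiliary Dirichlet problem for the non-divergence form operator $g^{ij}\partial_i\partial_j$ and running a Shi-Tam type uniqueness argument with the cut-offs of Lemma \ref{l:cut-off}, one shows that $u$ is globally $W^{2,q}_{\mathrm{loc}}$ for all $q<p$ (and $W^{2,p}$ when $n<p<\infty$), not just $W^{2,q}_{\mathrm{loc}}$ away from $\Sigma$. Combined with the $L^\infty$ gradient bound of Proposition \ref{p:gradientdestimate_Deltabound}, this puts $|\Hess u|$ into $L^p$ and $|\nabla u|$ into $L^\infty$, which is exactly the integrability needed to kill the $\eta_\epsilon$-error. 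The paper then further mollifies with the heat semigroup (Lemma \ref{l:w12approximateheatkernel}) and a smooth $W^{2,q}$-approximation $h_{i,t}$ of $u_t$ before invoking Bochner, so that the classical pointwise formula is applied only to smooth functions and the limits $\epsilon\to 0$, $i\to\infty$, $t\to 0$ are taken in the right order. If you insert this $W^{2,q}$ step and do the $L^p$-$L^{p/(p-1)}$ pairing instead of the $L^1$ estimate, your outline becomes essentially the paper's proof.
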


Let us recall the definition of RCD space, see (\cite{RCD,AMS,EKS15}). In general, this is defined on metric measure space. In our setting, we will consider manifold with $C^0$-metric which has a natural metric and measure structure.

\begin{definition}[RCD Ricci lower bound]\label{R}Let $K$ be some real constant. For a Riemannian manifold $(M^n,g)$ with volume measure and $g\in C^0(M)$, we say that it is a RCD($K,n$) space, or say that it has Ricci curvature not less than $K$ in the sense of RCD, if 
\begin{itemize}
		\item[(1)] it is infinitesimally Hilbertian,
		\item[(2)] for some $C>0$, and some point $p\in M$, it holds $\mu_g(B_r(x))\le e^{Cr^2}$, for any $r>0$, where $\mu_g$ is Lebesgue measure taken with respect to $g$,
		\item[(3)] for any $f\in W^{1,2}(M)$ satisfying $|\nabla f|\in L^\infty(M)$, it admits a Lipschitz representative $\tilde f$ with $\Lip (\tilde f)\le \|\nabla f\|_{L^\infty(M)}$,
		\item[(4)] for any $f\in D(\Delta)$ with $\Delta f\in W^{1,2}(M)$, and for any $\varphi \in L^\infty(M)\cap D(\Delta)$ with $\varphi \ge 0$, $\Delta \varphi\in L^\infty(M)$, the Bochner inequality
		\[\frac{1}{2}\int_M |\nabla f|^2\Delta \varphi d\mu_g\ge \frac{1}{n}\int_M (\Delta f)^2\varphi d\mu_g+\int_M \varphi  \left( \langle \nabla f,\nabla \Delta f\rangle +K|\nabla f|^2 \right)d\mu_g\]
		holds.
	\end{itemize}
\end{definition}

\begin{remark}
For a Riemannian manifold $(M^n,g)$ with volume measure and $g\in C^0(M)$, (1) and (3) in Definition \ref{R} hold automatically. If $g$ is asymptotically flat, then (2) holds. Therefore, to prove Theorem \ref{t:RCDnonnegative}, it only needs to check a weak Bochner inequality (see Proposition \ref{p:weakbochnerinequaliyt}, see also \cite{BKMR} ). To approach this, we need some apriori estimates. First we requires gradient estimates for harmonic functions and Bochner inequality for harmonic functions. 
\end{remark}

\begin{proposition}\label{p:gradientdestimate_Deltabound}
	Assume as Theorem \ref{t:RCDnonnegative} and $u\in D(\Delta) $ satisfying $\Delta u\in L^\infty(B_1(x))$. Then $|\nabla u|$ is bounded. Moreover, if $\Delta u=0$, then we have  
	\begin{align}
		\sup_{B_{1/2}(x)}|\nabla u|^2\le C\fint_{B_1(x)}|\nabla u|^2dx,
	\end{align}
	where $C=C(n,g)$ depends only on $n$, $\|g\|_{W^{1,p}}$ and the $L^2$-Sobolev constant in $B_1(x)$.
\end{proposition}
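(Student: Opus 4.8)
\emph{Strategy.} The statement is local, so fix the ball $B_1(x)$ and work there. Since $g$ is smooth on $M\setminus\Sigma$, classical elliptic regularity for the smooth metric $g$ shows that $u$ is smooth on $M\setminus\Sigma$ when $\Delta u=0$, and $u\in C^{1,\alpha}_{\rm loc}(M\setminus\Sigma)$ when $\Delta u\in L^\infty$. On $M\setminus\Sigma$ the Bochner formula together with $\Ric_g\equiv0$ there gives
\begin{align}\label{eq:prop-bochner}
\Delta_g|\nabla u|^2=2|\nabla^2 u|^2+2\langle\nabla u,\nabla(\Delta u)\rangle\qquad\text{on }M\setminus\Sigma .
\end{align}
If $\Delta u=0$ this says that $w:=|\nabla u|^2$ is subharmonic on $M\setminus\Sigma$. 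If $\Delta u=f\in L^\infty$, we write $\langle\nabla u,\nabla f\rangle=\Div_g(f\nabla u)-f^2$, so that on $M\setminus\Sigma$ the function $w$ is a subsolution of the divergence-form inequality $\Delta_g w\ge 2\,\Div_g(f\nabla u)-2f^2$ with data in $L^\infty$. In either case the plan is to show that these inequalities persist \emph{weakly across} $\Sigma$, i.e.\ on all of $B_1(x)$, and then to invoke De Giorgi--Nash--Moser theory on $(B_1(x),g)$.

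\emph{Regularity near $\Sigma$.} Write $\Delta_g u=f$ in nondivergence form, $a^{ij}\partial_{ij}u=f-(\partial_i a^{ij})\partial_j u$ with $a^{ij}=\sqrt{\det g}\,g^{ij}\in C^0\cap W^{1,p}_{\rm loc}$. Starting from $u\in W^{1,2}$ and alternating De Giorgi--Nash--Moser (continuous leading coefficients), Meyers' higher integrability, and the Calder\'on--Zygmund $W^{2,q}$ estimates for VMO leading coefficients, a finite bootstrap yields $\nabla u\in L^q_{\rm loc}(M)$ for all $q<\infty$, and $u\in W^{2,q}_{\rm loc}(M)$ for $q$ up to (essentially) $p$; when $n<p\le\infty$ one in fact obtains $\nabla u\in L^\infty_{\rm loc}(M)$. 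Differentiating $w=g^{ij}\partial_i u\,\partial_j u$ then gives $\nabla w\in L^q_{\rm loc}(M)$ for $q$ up to (essentially) $p$, the two contributions being $(\partial g)(\nabla u)^2$ and $(\nabla^2 u)(\nabla u)$; in particular $w\in W^{1,2}_{\rm loc}(M)$ since $n\ge3$.

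\emph{Removing $\Sigma$ and concluding.} Let $\eta_\epsilon$ be the cut-off functions of $\Sigma$ from Lemma~\ref{l:cut-off}: $\eta_\epsilon\equiv1$ near $\Sigma$, $\mathrm{supp}\,\eta_\epsilon\subset B_\epsilon(\Sigma)$, $0\le\eta_\epsilon\le1$, and $\int_M|\nabla\eta_\epsilon|^{p/(p-1)}d\mu_h\to0$ as $\epsilon\to0$ — this is exactly where the hypothesis $\cH^{n-p/(p-1)}(\Sigma)<\infty$ (resp.\ $\cH^{n-1}(\Sigma)=0$) enters. For $0\le\phi\in C_0^\infty(B_1(x))$, the function $(1-\eta_\epsilon)\phi$ is a nonnegative Lipschitz function supported in $M\setminus\Sigma$, so \eqref{eq:prop-bochner} tested against it holds classically. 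Writing $\phi=(1-\eta_\epsilon)\phi+\eta_\epsilon\phi$, the discrepancy is a sum of terms $\int\eta_\epsilon\langle\nabla w,\nabla\phi\rangle$ and $\int\phi\,\langle\nabla w,\nabla\eta_\epsilon\rangle$ (and, when $\Delta u\ne0$, analogous ones with $f\nabla u$ in place of $\nabla w$): the first tends to $0$ because $\nabla w\in L^1_{\rm loc}$ and $|B_\epsilon(\Sigma)|\to0$, and the second is bounded, by H\"older, by $\|\nabla w\|_{L^p(B_1(x))}\,\|\nabla\eta_\epsilon\|_{L^{p/(p-1)}}\,\|\phi\|_\infty\to0$. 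Hence $w\ge0$ is a $W^{1,2}$ weak subsolution on all of $B_1(x)$ of a uniformly elliptic divergence-form (in)equality, with $L^\infty$ data in the non-harmonic case. The local boundedness part of De Giorgi--Nash--Moser then gives $\sup_{B_{1/2}(x)}|\nabla u|^2<\infty$, and when $\Delta u=0$ the mean value inequality for nonnegative subsolutions gives
\begin{align}
\sup_{B_{1/2}(x)}|\nabla u|^2\le C\fint_{B_1(x)}|\nabla u|^2\,dx,
\end{align}
with $C$ depending only on $n$, on $\|g\|_{W^{1,p}(B_1(x))}$ (which controls $\Delta_g\phi$, the density $d\mu_g/dx$ and the ellipticity of $g$ on $B_1(x)$) and on the $L^2$-Sobolev constant on $B_1(x)$ — precisely the claimed dependence.

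\emph{Where the difficulty is.} The heart of the matter is the ``removing $\Sigma$'' step, and within it the matching of exponents: closing the cut-off argument requires $\nabla|\nabla u|^2\in L^{p}_{\rm loc}(M)$ (the conjugate exponent of $p/(p-1)$), which forces $\nabla u\in L^\infty_{\rm loc}$ through the term $(\partial g)(\nabla u)^2$. For $n<p<\infty$ this is exactly what the bootstrap delivers. At the endpoints $p=n$ and $p=\infty$, however, continuous (merely VMO) coefficients need not produce Lipschitz solutions, so the bootstrap only gives $\nabla u\in L^q_{\rm loc}$ for every finite $q$, and $\nabla w$ just fails to lie in $L^p_{\rm loc}$ (it lies in $L^q_{\rm loc}$ for every $q<\infty$ when $p=\infty$, and in $L^{n-\epsilon}_{\rm loc}$ for every $\epsilon>0$ when $p=n$); cut-offs cannot be controlled above the critical exponent $p/(p-1)$, so one must argue further — e.g.\ run a secondary bootstrap once a preliminary De Giorgi bound for $w$ on balls avoiding $\Sigma$ is available (making $\nabla u\in L^\infty$ accessible), exploit that $g\in C^0$ rather than merely VMO, or pass through the mollified metrics $g_\delta$ of Lemma~\ref{lm2.1} with estimates uniform in $\delta$. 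That endpoint bookkeeping is the only genuinely delicate point; the remainder is standard elliptic theory on $(B_1(x),g)$.
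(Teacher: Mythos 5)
Your strategy agrees in broad outline with the paper's, but as you yourself flag, it stops short of closing the argument exactly where the paper has to work hardest, namely the critical exponent $p=n$, and none of the three workarounds you list is the one the paper uses there.

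For $n<p\le\infty$ the paper (Lemma \ref{l:gradient_plargern}) does essentially what you sketch, but implemented via your third suggested fix: it mollifies the metric to smooth $g_i\to g$ in $W^{1,p}$, solves $\Delta_{g_i}u_i=f_i$ with matching boundary data, establishes $u_i\to u$ in $W^{1,2}$, and then applies the genuinely quantitative gradient estimate of Lemma \ref{l:gradient_W1pmetric} (a Moser iteration requiring $p>n$, not merely elliptic regularity, because $\Ric$ is only in $L^{p/2}$). So your ``pass through $g_\delta$'' option is right — but only above the critical exponent. At $p=n$ it fails precisely because $\|\Ric_{g_\delta}\|_{L^{n/2}}$ cannot be made small, so one cannot extract a uniform Lipschitz bound from the smooth approximations. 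This is exactly the exponent mismatch you diagnose: $\nabla w$ does not quite reach $L^n_{\rm loc}$, so you cannot pair it with $\nabla\eta_\epsilon\in L^{n/(n-1)}$.

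Where the paper diverges from you at $p=n$ is in \emph{which} object gets smoothed. It does not smooth the metric; it first proves a priori $u\in W^{2,q}_{\rm loc}$ for all $q<n$ (Lemma \ref{l:aprioriW2q_harmonic}), and then in Lemma \ref{l:weakbochner_boundedlap} approximates \emph{the function} $u$ by smooth $u_i\to u$ in $W^{2,q}$. For each fixed $i$ the quantity $|\nabla u_i|$ is genuinely in $L^\infty_{\rm loc}$ and $|\nabla^2 u_i|\in L^p_{\rm loc}$ because $u_i$ is smooth, so the cut-off $\psi_\epsilon$ of $\Sigma$ can be removed with room to spare (and here the Ricci-flatness of $g$ on $M\setminus\Sigma$ is used to kill the curvature term in Bochner). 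Only after the $\psi_\epsilon$ cut-off is gone does the paper let $i\to\infty$; the surviving integral inequality involves just $\nabla u$, $\Delta u$, and $\nabla\sqrt{|\nabla u|^2+s}$, all of which pass to the limit under $W^{2,q}$ convergence for some $q>2$. Moser iteration on this weak Bochner inequality then gives the proposition. The ordering — smooth the function first, remove $\Sigma$ while everything is smooth, pass to the limit last — is the device that closes the endpoint case, and it is absent from your proposal. The $\sqrt{|\nabla u|^2+s}$ regularization (instead of your $w=|\nabla u|^2$ plus the $\Div(f\nabla u)$ identity) is a secondary technical convenience for handling $\Delta u\ne0$ in the iteration.

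So: correct idea, correct identification of the bottleneck, but the gap at $p=n$ is real and your listed fixes do not resolve it; the paper's fix (approximate $u$ in $W^{2,q}$, not $g$) is the missing ingredient.
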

\begin{remark}
	We should point out that $p=n$ and $p=\infty$ are two critical cases. In the following lemmas, sometimes we will deal with these two cases separately.  When $n<p\le \infty$, one can use approximation to show $|\nabla u|$ is bounded(see Lemma \ref{l:gradient_plargern}). However, when $p=n$, we need to use the fact that $g$ is Ricci flat away from $ \Sigma$. 
\end{remark}
\begin{lemma}\label{l:laplaciancovergence}
	Let $(M^n,g_i)$ satisfy $g_i\in C^0$ and $g_i\to g$ in $C^0$-sense. Suppose that $u_i\in D(\Delta_{g_i})$ and $u\in D(\Delta_g)$. Let $\Delta_{g_i}u_i=f_i$ on $B_1(x)$ with $f_i\to f\in L^2$ and $u_i\to u$ in $W^{1,2}$, then $\Delta_gu=f$. 
\end{lemma}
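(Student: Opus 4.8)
The plan is to pass to the limit in the weak formulation of the Laplace equation. Recall that $u_i\in D(\Delta_{g_i})$ with $\Delta_{g_i}u_i=f_i$ on $B_1(x)$ means that for every test function $\varphi\in C_0^\infty(B_1(x))$ one has
\begin{align*}
-\int_{B_1(x)}\langle\nabla u_i,\nabla\varphi\rangle_{g_i}\,d\mu_{g_i}=\int_{B_1(x)}f_i\varphi\,d\mu_{g_i}.
\end{align*}
The goal is to show the analogous identity with $g_i$ replaced by $g$, $u_i$ by $u$, and $f_i$ by $f$; since $u\in D(\Delta_g)$ is assumed, this identifies $\Delta_g u=f$ on $B_1(x)$. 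First I would fix $\varphi$ and write the difference of the left-hand sides as a sum of three error terms: one coming from replacing the inner product $\langle\cdot,\cdot\rangle_{g_i}$ by $\langle\cdot,\cdot\rangle_g$, one from replacing $d\mu_{g_i}$ by $d\mu_g$, and one from replacing $\nabla u_i$ by $\nabla u$. Since $g_i\to g$ in $C^0$ on the (relatively compact) region in question, and since $g$ (hence each $g_i$ for large $i$) is uniformly comparable to the fixed background metric $h$, the coefficients of $\langle\cdot,\cdot\rangle_{g_i}$ and the density $d\mu_{g_i}/d\mu_h$ converge uniformly to those of $g$; combined with the $W^{1,2}$-bound on $u_i$ (which follows from $u_i\to u$ in $W^{1,2}$) and the fact that $\nabla\varphi$ is bounded with compact support, the first two error terms tend to $0$. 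The third term is controlled by $\|\nabla u_i-\nabla u\|_{L^2}\|\nabla\varphi\|_{L^2}\to 0$ using the $W^{1,2}$ convergence. On the right-hand side, $\int f_i\varphi\,d\mu_{g_i}\to\int f\varphi\,d\mu_g$ because $f_i\to f$ in $L^2$, $\varphi$ is bounded, and $d\mu_{g_i}\to d\mu_g$ uniformly on the support of $\varphi$.

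The only genuinely delicate point is making sure that the norms used in ``$u_i\to u$ in $W^{1,2}$'' and ``$f_i\to f$ in $L^2$'' are taken consistently — e.g.\ all measured with respect to the fixed background metric $h$, or equivalently with respect to any one of the $g_i$, which is harmless because all these metrics are mutually uniformly equivalent on $B_1(x)$. Once this bookkeeping is set up, each of the four limits above is a routine application of Cauchy--Schwarz together with uniform convergence of the metric coefficients, so there is no substantial obstacle; the statement is essentially a stability property of the weak Laplace equation under $C^0$-convergence of the metric with $W^{1,2}$/$L^2$ convergence of the data. I would therefore present the argument compactly: state the weak formulation, split into the four error terms, bound each by (uniform metric error)$\times$(uniform $W^{1,2}$ bound on $u_i$, $f$ data) plus ($W^{1,2}$ or $L^2$ convergence)$\times$(test function norm), and conclude that the limiting weak identity holds for all $\varphi\in C_0^\infty(B_1(x))$, whence $\Delta_g u=f$.
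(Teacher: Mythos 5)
Your proposal is correct and follows the same approach as the paper: pass to the limit in the weak formulation of $\Delta_{g_i}u_i=f_i$ using the $C^0$-convergence of the metrics and the $W^{1,2}$/$L^2$ convergence of $u_i$ and $f_i$. The paper simply states this without spelling out the four error terms, so your write-up is just a more detailed version of the same argument.
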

\begin{proof}
The result follows directly. Actually, let $\varphi\in C_0^1(B_1(x))$ be a test function. Then 
\begin{align}
	\int_{B_1(x)}\langle \nabla  u_i,\nabla  \varphi\rangle_{g_i}d\mu_{g_i}=-\int_{B_1(x)}f_i \varphi d\mu_{g_i}.
\end{align}
Since $g_i\to g$ in $C^0$ and $u_i\to u$ in $W^{1,2}$-sense, letting $i\to \infty$, we get 
\begin{align}
	\int_{B_1(x)}\langle \nabla  u,\nabla  \varphi\rangle_{g}d\mu_{g}=-\int_{B_1(x)}f \varphi d\mu_{g}.
\end{align}
This means $\Delta_g u=f$ which proves the lemma. 
\end{proof}

For $n<p\le \infty$, we can use approximation argument to get the following gradient estimates. 
\begin{lemma}\label{l:gradient_plargern}
	Let $(M^n,g)$ be manifold with $g\in C^0\cap W^{1,p}_{loc}(M)$ with $n< p\le\infty$. Assume $u\in D(\Delta) $ and $\Delta u\in L^\infty(B_1(x))$, then $|\nabla u|$ is locally bounded. Moreover, if $\Delta u=0$, then we have  
	\begin{align}
		\sup_{B_{1/2}(x)}|\nabla u|^2\le C\fint_{B_1(x)}|\nabla u|^2dx,
	\end{align}
	where $C=C(n,g)$ depends only on the $L^2$-Sobolev constant in $B_1(x)$.
\end{lemma}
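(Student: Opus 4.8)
The plan is to run an approximation argument, using that $p>n$ places the metric in the Hölder regime $C^{0,\alpha}$ with $\alpha=1-n/p$, so that in local coordinates $\Delta_g$ becomes a uniformly elliptic divergence-form operator with uniformly Hölder coefficients, for which interior $C^{1,\alpha}$ (Schauder) estimates hold with constants controlled by the ellipticity and the Hölder norm of the coefficients only.

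First I would work in a coordinate chart on $B_1(x)$ and write $\Delta_g v=\frac{1}{\sqrt{\det g}}\partial_i\big(\sqrt{\det g}\,g^{ij}\partial_j v\big)$, setting $a^{ij}:=\sqrt{\det g}\,g^{ij}$; since $p>n$ we have $g\in C^{0,\alpha}$, so the $a^{ij}$ are uniformly elliptic and $C^{0,\alpha}$, with norms bounded in terms of the ellipticity and $\|g\|_{W^{1,p}}$. Using Lemma \ref{lm2.1} I take smooth $g_\delta\to g$ in $C^0$ and $W^{1,p}$, with $C^{-1}h\le g_\delta\le Ch$ uniformly and $\|g_\delta\|_{C^{0,\alpha}}$ uniformly bounded, so that $a^{ij}_\delta:=\sqrt{\det g_\delta}\,g_\delta^{ij}$ are uniformly elliptic with a uniform $C^{0,\alpha}$ bound. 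With $f:=\Delta u\in L^\infty(B_1(x))$, I then solve by Lax--Milgram the Dirichlet problems $\Delta_{g_\delta}u_\delta=f$ on $B_1(x)$ with $u_\delta-u\in W^{1,2}_0(B_1(x))$; each $u_\delta$ is smooth by interior elliptic regularity for the smooth metric $g_\delta$.

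Next I would check $u_\delta\to u$ in $W^{1,2}(B_1(x))$. Testing the weak formulations for $u_\delta$ and for $u$ against $u_\delta-u$ and subtracting, the difference of the two bilinear forms equals an integral of $(a^{ij}_\delta-a^{ij})$ against $\nabla u\cdot\nabla(u_\delta-u)$ plus a term coming from $\sqrt{\det g_\delta}-\sqrt{\det g}$, which by $g_\delta\to g$ in $C^0$, uniform ellipticity and the Poincaré inequality is bounded by $o(1)\big(\|\nabla u\|_{L^2}+\|f\|_{L^2}\big)\|\nabla(u_\delta-u)\|_{L^2}$; hence $\|\nabla(u_\delta-u)\|_{L^2}\to0$. (The limit is $u$ also by Lemma \ref{l:laplaciancovergence} together with uniqueness of the Dirichlet problem for $g$.) Then I apply to $u_\delta-\fint_{B_1(x)}u_\delta\,d\mu_{g_\delta}$, which solves the same equation, the interior $C^{1,\alpha}$ estimate for divergence-form equations with $C^{0,\alpha}$ coefficients and right-hand side in $L^\infty\subset L^q$, $q>n$ (see \cite{GTru}), preceded by the De Giorgi--Nash--Moser $L^\infty$ bound, and use the Poincaré inequality $\|u_\delta-\fint u_\delta\|_{L^2(B_1)}\le C\|\nabla u_\delta\|_{L^2(B_1)}$, to obtain
\[
\sup_{B_{1/2}(x)}|\nabla u_\delta|\le C\big(\|\nabla u_\delta\|_{L^2(B_1(x))}+\|f\|_{L^\infty(B_1(x))}\big),
\]
with $C$ depending only on $n$, $\|g\|_{W^{1,p}}$ and the $L^2$-Sobolev constant in $B_1(x)$. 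Since $\|\nabla u_\delta\|_{L^2(B_1)}$ is uniformly bounded by the $W^{1,2}$-convergence, the right side is uniform in $\delta$; letting $\delta\to0$ along a subsequence with $\nabla u_\delta\to\nabla u$ a.e.\ gives local boundedness of $|\nabla u|$ on $B_1(x)$. In the harmonic case one takes $f\equiv0$ throughout, so the estimate reads $\sup_{B_{1/2}}|\nabla u_\delta|^2\le C\,\mu_{g_\delta}(B_1(x))\fint_{B_1}|\nabla u_\delta|^2d\mu_{g_\delta}$; by the uniform comparability of the measures and $\delta\to0$ (with $\fint_{B_1}|\nabla u_\delta|^2\to\fint_{B_1}|\nabla u|^2$) one obtains the asserted inequality. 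Since $g\in C^{0,\alpha}$ one could apply this Schauder estimate directly to $u$, but the approximation makes the dependence of $C$ transparent and stays in line with the rest of the paper.

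The main obstacle is obtaining the gradient estimate for $u_\delta$ with a constant that is \emph{stable} as $\delta\to0$, i.e.\ that depends on $g_\delta$ only through its ellipticity and its $C^{0,\alpha}$ (equivalently $W^{1,p}$, $p>n$) norm, not through derivatives of $g_\delta$. This is exactly where the hypothesis $p>n$ enters — it puts us in the Hölder category where divergence-form Schauder theory yields coefficient-Hölder-norm-dependent constants — and it explains why the borderline case $p=n$ is excluded here and must instead be treated using Ricci flatness, as in Proposition \ref{p:gradientdestimate_Deltabound}.
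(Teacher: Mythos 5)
Your proof is correct, but it takes a genuinely different route from the paper's. The paper also reduces to a sequence of approximating smooth problems $\Delta_{g_i}u_i=f_i$ on a small coordinate ball with $u_i=u$ on the boundary, but the key gradient bound for each $u_i$ is obtained by the Bochner formula for the smooth metric $g_i$ together with a Moser iteration whose constants depend only on $\|g_i\|_{W^{1,p}}$ (this is Lemma~\ref{l:gradient_W1pmetric} in the appendix); the scheme parallels the one used later for $p=n$ via Lemma~\ref{l:weakbochner_boundedlap}. You instead exploit the Morrey embedding $W^{1,p}\hookrightarrow C^{0,\alpha}$ for $p>n$ and hand the problem off to classical divergence--form elliptic theory: De Giorgi--Nash--Moser for the $L^\infty$ bound and interior $C^{1,\alpha}$ Schauder estimates with Hölder coefficients for the gradient bound, with constants controlled by ellipticity and $\|g\|_{C^{0,\alpha}}\lesssim\|g\|_{W^{1,p}}$. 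That route is shorter, avoids the appendix lemma, and — as you note — could even be applied directly to $u$ without any mollification, since the coefficients are already Hölder; the price is that it works only in the $p>n$ regime and does not share any machinery with the $p=n$ case, whereas the paper's Bochner--Moser argument is structurally the same one reused when $p=n$ (there supplemented by Ricci flatness). One small remark: the Schauder constant necessarily also depends on $\|g\|_{W^{1,p}}$ (through the Hölder norm of the coefficients), not on the Sobolev constant alone; this matches what the paper's proof actually produces, even though the lemma's displayed dependence list is stated a bit too tersely.
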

\begin{proof}
	Since the result is local, we will only consider the estimate around $x$. Let $B_r(x)\subset B_{1/2}(x)$ be in a local coordinate. Let $f_i\in C^\infty(M)$ be a sequence of smooth functions converging in $L^2$ sense to $f:=\Delta u$ and satisfying $\sup_{B_1(x)}|f_i|\le 2\sup_{B_1(x)}|\Delta u|+1$. Let $g_i$ be a sequence of smooth metrics converging in $W^{1,p}$-sense to $g$. Let us solve $\Delta_{g_i}u_i=f_i$ in $B_r(x)$ with $u_i=u$ on $\partial B_r(x)$. We will first show $u_i\to u$ in $W^{1,2}(B_r(x))$-sense and then show that $u_i\to u$ pointwisely in $B_{r/2}(x)$ and $|\nabla u_i|$ has uniform bounds on $B_{r/2}(x)$. This gives the bound of $|\nabla u|$.

	 Since $u\in W^{1,p}$ with $p>n$, by Sobolev embedding, $u$ is bounded. Applying maximum principle to each $\Delta_{g_i}u_i=f_i$, we get $|u_i|$ is bounded by the bound of $u$ and the bound of $f_i$.  
Multiplying $u-u_i$ to $\Delta_gu=f$ and $\Delta_{g_i}u_i=f_i$, and integrating by parts, we get 
	\begin{align}
		\int_{B_r(x)}\langle \nabla(u-u_i),\nabla u\rangle_gd\mu_g=-\int_{B_r(x)}f(u-u_i)d\mu_g\\
		\int_{B_r(x)}\langle \nabla(u-u_i),\nabla u_i\rangle_{g_i}d\mu_{g_i}=-\int_{B_r(x)}f_i(u-u_i)d\mu_{g_i}.
	\end{align}
	Hence 
	\begin{align}
		\int_{B_r(x)}\langle \nabla(u-u_i),\nabla (u-u_i)\rangle_{g_i}d\mu_{g_i}=&\int_{B_r(x)}\langle \nabla(u-u_i),\nabla u\rangle_{g_i}d\mu_{g_i}-\int_{B_r(x)}\langle \nabla(u-u_i),\nabla u_i\rangle_{g_i}d\mu_{g_i}\\
		=&\int_{B_r(x)}\langle \nabla(u-u_i),\nabla u\rangle_{g_i}d\mu_{g_i}+\int_{B_r(x)}f_i(u-u_i)d\mu_{g_i}\\
		=&\int_{B_r(x)}\langle \nabla(u-u_i),\nabla u\rangle_{g_i}d\mu_{g_i}-\int_{B_r(x)}\langle \nabla(u-u_i),\nabla u\rangle_gd\mu_g\\
		&+\int_{B_r(x)}f_i(u-u_i)d\mu_{g_i}-\int_{B_r(x)}f(u-u_i)d\mu_g\\
		=&\int_{B_r(x)}\partial_{\alpha}(u-u_i)\partial_{\beta}u\left(g^{\alpha\beta}_i\sqrt{\det(g_{i,\alpha\beta})}-g^{\alpha\beta}\sqrt{\det(g_{\alpha\beta})}\right)dy\\
		&+\int_{B_r(x)}(u-u_i)\left(f_i\sqrt{\det(g_{i,\alpha\beta})}-f\sqrt{\det(g_{\alpha\beta})}\right)dy.
	\end{align}
	This implies that $|\nabla u_i|$ has uniform bounded $L^2$-norm. Moreover, letting $i\to \infty$, noting that $g_i\to g$ in $C^0$ sense we also get that 
	\begin{align}
		\lim_{i\to \infty}\int_{B_r(x)}\langle \nabla(u-u_i),\nabla (u-u_i)\rangle_{g_i}d\mu_{g_i}=0.
	\end{align}
	Since $u_i-u=0$ on $\partial B_r(x)$, by Poincare inequality, we get $u_i\to u$ in $W^{1,2}$-sense.  Let us now show uniformly interior estimates for $u_i$. Since $f_i$ is smooth and $g_i$ is smooth, the function $u_i$ is smooth in $B_r(x)$.  By Bochner formula, we have 
	\begin{align}
		\frac{1}{2}\Delta_{g_i} |\nabla u_i|^2=|\nabla^2u_i|^2+\langle \nabla \Delta_{g_i} u_i,\nabla u_i\rangle_{g_i}+\Ric_{g_i}(\nabla u_i,\nabla u_i).
	\end{align}
	Since $g_i$ has uniform $W^{1,p}$-norm on $B_r(x)$, by elliptic estimates we can get that (see the details in Lemma \ref{l:gradient_W1pmetric})
	\begin{align}\label{ieq4.24}
		\sup_{B_{r/2}(x)}|\nabla u_i|^2\le C(n,g_i)\left(\fint_{B_r(x)}|\nabla u_i|^2d\mu_{g_i}+r^2\sup_{B_r(x)}|f_i|^2\right).
	\end{align}
	where the constant $C(n,g_i)$ depends only on the $W^{1,p}$-norm of $g_i$ on $B_r(x)$, which is uniform. Thus we get uniform estimate for $|\nabla u_i|$ on $B_{r/2}(x)$. By Arzela-Ascoli theorem, up to a subsequence $u_i$ converges to a Lipschitz function $\tilde{u}$ pointwisely on $B_{r/2}(x)$. However, $u_i\to u$ in $W^{1,2}$-sense on $B_r(x)$. Thus $u=\tilde{u}$ and hence $u$ is Lipschitz which finishes the proof.

	Moreover, if $\Delta u=0$, then we can let $f_i=0$ on $M$ for each $i$, and (\ref{ieq4.24}) gives
	\begin{align} 
		\sup_{B_{1/2}(x)}|\nabla u_i|^2\le C(n,g_i)\left(\fint_{B_1(x)}|\nabla u_i|^2d\mu_{g_i} \right).
	\end{align}

Choose an arbitrary point $y\in B_{1/2}(x)$, let $r_y$ be a positive small enough such that $B_{2r_y}(y)\subset B_{1/2}(x)$ and $\sup_{B_{2r_y}(y)}|\nabla u_i|^2\le C(n,g)\fint_{B_1(x)}|\nabla u|^2d\mu_g$ implies ${\rm{Lip}}_{B_{r_y}(y)}u_i\le C(n,g_i)\left(\fint_{B_1(x)}|\nabla u|^2d\mu_{g_i}\right)^\frac{1}{2}$, where ${\rm{Lip}}_{B_{r_y}(y)}u_i$ means the Lipschitz constant of $u_i$ on $B_{r_y}(y)$. Then we have ${\rm{Lip}}_{B_{r_y}(y)}u \le C(n,g)\left(\fint_{B_1(x)}|\nabla u|^2d\mu_g\right)^\frac{1}{2}$, where ${\rm{Lip}}_{B_{r_y}(y)}u $ means the Lipschitz constant of $u $ on $B_{r_y}(y)$, and the constant $C(n,g)$ depends only on the $W^{1,p}$-norm of $g $ on $B_1(x)$. Since $|\nabla u|(y)\le {\rm{Lip}}_{B_{r_y}(y)}u $, we have
\begin{align}
		|\nabla u|^2(y)\le C(n,g)\fint_{B_1(x)}|\nabla u|^2d\mu_g.
	\end{align}
	Since $y$ is arbitrary in $y\in B_{1/2}(x)$, we have
\begin{align}
		\sup_{B_{1/2}(x)}|\nabla u |^2\le C(n,g)\fint_{B_1(x)}|\nabla u|^2d\mu_g,
	\end{align}
	which complete the proof of the Lemma
\end{proof}

Since $g\in C^0$, we can use elliptic $L^p$-theory to get some apriori estimates for $u$.
\begin{lemma}\label{l:aprioriW2q_harmonic}
	Assume as Proposition \ref{p:gradientdestimate_Deltabound}. Then 
$u\in W^{2,q}$ for any $q<p$ when $p=n$ or $p=\infty$, and $u\in W^{2,p}$ for $n<p<\infty$.
\end{lemma}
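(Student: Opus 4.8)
The plan is to fix a point $x$ and work in a small coordinate ball $B_r(x)\subset B_1(x)$ on which $g$ is $C^0$, and to read the equation $\Delta_g u=f$ (with $f:=\Delta u\in L^\infty(B_1(x))$) as a divergence-form elliptic equation with merely continuous coefficients. Writing the Laplacian in coordinates, $\Delta_g u = \tfrac{1}{\sqrt{\det g}}\,\partial_\alpha\!\bigl(\sqrt{\det g}\,g^{\alpha\beta}\partial_\beta u\bigr)$, so $u$ is a weak solution of $\partial_\alpha(a^{\alpha\beta}\partial_\beta u)=\tilde f$ with $a^{\alpha\beta}=\sqrt{\det g}\,g^{\alpha\beta}\in C^0$, uniformly elliptic because $C^{-1}h\le g\le Ch$, and with right-hand side $\tilde f=\sqrt{\det g}\,f\in L^\infty\subset L^q$ for every $q<\infty$. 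The classical Calder\'on--Zygmund estimate for divergence-form equations with VMO (in particular continuous) coefficients then gives $u\in W^{2,q}_{loc}$ for every $q<\infty$, with the interior bound
\begin{align}
\|u\|_{W^{2,q}(B_{r/2}(x))}\le C\bigl(\|u\|_{L^q(B_r(x))}+\|\tilde f\|_{L^q(B_r(x))}\bigr),
\end{align}
where $C$ depends on $n$, $q$, the ellipticity constants, and the modulus of continuity of $a^{\alpha\beta}$. Since the statement is local and $B_1(x)$ is compactly covered by such balls, this already yields $u\in W^{2,q}$ for every finite $q$, which in particular settles the cases $p=n$ and $p=\infty$ (where we only claim $q<p$, hence any finite $q$ in the $p=\infty$ case, and $q<n$ in the $p=n$ case).

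For $n<p<\infty$ the claim is the sharper $u\in W^{2,p}$, and here one wants to exploit that $g\in W^{1,p}$ rather than just $C^0$. The plan is to differentiate the equation once. Formally, applying $\partial_\gamma$ to $\partial_\alpha(a^{\alpha\beta}\partial_\beta u)=\tilde f$ gives
\begin{align}
\partial_\alpha\bigl(a^{\alpha\beta}\partial_\beta(\partial_\gamma u)\bigr)=\partial_\gamma\tilde f-\partial_\alpha\bigl((\partial_\gamma a^{\alpha\beta})\partial_\beta u\bigr),
\end{align}
so $v:=\partial_\gamma u$ solves a divergence-form equation with the same continuous coefficients $a^{\alpha\beta}$ and right-hand side of the form $\partial_\alpha G^\alpha$ with $G^\alpha = -(\partial_\gamma a^{\alpha\beta})\partial_\beta u + (\text{terms from }\tilde f)$. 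Now $\partial_\gamma a^{\alpha\beta}\in L^p$ (because $g\in W^{1,p}$), and from Step 1 combined with Sobolev embedding $W^{2,q}\hookrightarrow W^{1,\infty}$ for $q>n$ we have $\partial_\beta u\in L^\infty_{loc}$; hence $G^\alpha\in L^p_{loc}$. The divergence-form Calder\'on--Zygmund estimate with coefficients in VMO (continuous) then gives $v\in W^{1,p}_{loc}$, i.e. $u\in W^{2,p}_{loc}$, and a covering argument globalizes this. (The $\tilde f$-contribution is harmless: $f\in L^\infty$ and $\sqrt{\det g}\in C^0\cap W^{1,p}$, so $\tilde f\in W^{1,p}$ would not quite be available, but one only needs $\tilde f\in L^p$, which feeds into the non-differentiated estimate of Step 1 already, or one keeps the $\tilde f$ term undifferentiated and absorbs it as an $L^p$ source, which the same CZ theory handles.)

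The main obstacle I expect is making the formal differentiation in Step 2 rigorous given that $a^{\alpha\beta}$ is only $W^{1,p}$ (not smooth), so one cannot naively commute $\partial_\gamma$ past the equation. The clean way is the difference-quotient method: test the weak formulation of $\partial_\alpha(a^{\alpha\beta}\partial_\beta u)=\tilde f$ against $\partial^{-h}_\gamma(\eta^2\,\partial^h_\gamma u)$ for a cutoff $\eta$, use the uniform ellipticity and the product/Leibniz rule for difference quotients, and control the commutator term $\int (\partial^h_\gamma a^{\alpha\beta})\,\partial_\beta u\,\partial_\alpha(\cdots)$ by H\"older with exponents matched to $\partial_\gamma a^{\alpha\beta}\in L^p$ and $\partial_\beta u\in L^\infty$; this bounds $\|\partial^h_\gamma \nabla u\|_{L^2}$ uniformly in $h$, giving $u\in W^{2,2}$ to start, and then one bootstraps through the $W^{2,q}$ scale of Step 1 until the source $G^\alpha$ lands in $L^p$ and the sharp $W^{2,p}$ conclusion follows. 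Alternatively, one approximates $g$ by smooth $g_i\to g$ in $W^{1,p}$ (Lemma~\ref{lm2.1}) and $f$ by smooth $f_i$, solves $\Delta_{g_i}u_i=f_i$ with matching boundary data, derives the estimates for the smooth $u_i$ where differentiation is legitimate, and passes to the limit using Lemma~\ref{l:laplaciancovergence} together with the uniform $W^{2,p}$ bounds — this is the approach most consistent with the techniques already used in the paper.
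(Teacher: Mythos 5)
Your Step 1 contains the main gap: divergence-form Calder\'on--Zygmund theory with merely continuous (or VMO) coefficients yields $W^{1,q}$ control on $\nabla u$, \emph{not} a $W^{2,q}$ bound on the Hessian. The interior estimate you display, $\|u\|_{W^{2,q}(B_{r/2})}\le C\bigl(\|u\|_{L^q(B_r)}+\|\tilde f\|_{L^q(B_r)}\bigr)$ with $C$ depending on the modulus of continuity of $a^{\alpha\beta}$, is the a priori $W^{2,q}$ estimate for \emph{non-divergence} operators $a^{\alpha\beta}\partial_\alpha\partial_\beta$ (GT Thm.~9.11, Chiarenza--Frasca--Longo); to apply it you would first need to know $u\in W^{2,q}_{loc}$ and to rewrite the divergence-form equation as $a^{\alpha\beta}\partial_\alpha\partial_\beta u = \tilde f - (\partial_\alpha a^{\alpha\beta})\partial_\beta u$, which requires $\partial a\in L^p$ and control on $\partial u$, and which holds pointwise only on $B_r(x)\setminus\Sigma$ where $g$ is actually differentiable. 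Said differently, you cannot get $W^{2,q}$ regularity for a weak $W^{1,2}$ solution of $\partial_\alpha(a^{\alpha\beta}\partial_\beta u)=\tilde f$ from $C^0$ coefficients alone; the Hausdorff-measure smallness of $\Sigma$ (which is nowhere used in your Step 1) must enter somewhere.

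The paper's proof fills exactly this hole. It first obtains only $u\in W^{1,q}$ for all finite $q$ from the Caffarelli--Peral $W^{1,q}$ theory for divergence form with $C^0$ coefficients. It then localizes with a cutoff $\varphi$, writes the non-divergence equation $g^{ij}\partial_i\partial_j v=Q$ for $v=\varphi u$ \emph{only on} $B_r(x)\setminus\Sigma$, where $Q$ involves $\partial g\cdot\partial u\in L^q$ for $q<p$. It solves the auxiliary Dirichlet problem $g^{ij}\partial_i\partial_j w=Q$ on all of $B_r(x)$ by non-divergence CZ theory (GT Thm.~9.15) to produce a $w\in W^{2,q}$, and then proves $w\equiv v$ by a Shi--Tam-style removable-singularity argument: test $g^{ij}\partial_i\partial_j(w-v)=0$ against $(w-v)\psi_\epsilon$ with the cutoff functions $\psi_\epsilon$ of Lemma~\ref{l:cut-off}, and let $\epsilon\to 0$ using $\lim_{\epsilon\to 0}\int|\nabla\psi_\epsilon|^{p/(p-1)}=0$ together with a small-$r$ absorption of $\int_{B_r}|\partial g|^n$. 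That last identification $w\equiv v$ is where the codimension hypothesis on $\Sigma$ is actually used, and your proposal has nothing playing this role; your difference-quotient alternative and the smooth-approximation alternative would meet the same obstruction, since they likewise require commuting derivatives through the equation across $\Sigma$. The bootstrap in your Step 2 for $n<p<\infty$ is then sound once $W^{2,q}$ for some $q>n$ is in hand, and it matches the paper's final step.
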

\begin{remark}
	In the application below, we only require a priori that $u\in W^{2,q}$ for some $q>2$ and $|\nabla u|\in L^q$ for any $q<\infty$. Since $n\ge 3$, this alway holds by Lemma \ref{l:aprioriW2q_harmonic}.
\end{remark}

\begin{proof}Since the result is local, we will only prove the result around $x$.  Let us assume $B_r(x)$ is contained in a coordinate neighborhood.  Let us begin the proof basing on the above observations.  In a local coordinate we know that $u$ satisfies in distribution sense that
	\begin{align}
		\frac{1}{\sqrt{\det(g_{ij})}}\partial_i \left(g^{ij}\sqrt{\det(g_{ij})}\partial_ju\right)=\Delta u.
	\end{align}
	Since $g^{ij}\in C^0$, by $W^{1,q}$-estimate for divergence form elliptic equation \cite{CaPe}, we have $u\in W^{1,q}$ for any $1\le q<\infty$. 
	On the other hand, since $(M,g)$ is smooth away from $\Sigma$, by standard elliptic estimates we get that $u\in W^{2,q}_{loc}$ on $B_r(x)\setminus \Sigma$. Let $v=\varphi u$ where $\varphi$ is a smooth cut-off function with support in $B_r(x)$. Noting  on $B_r(x)\setminus \Sigma$ that
	\begin{align}
		g^{ij}\partial_i\partial_j u-\frac{1}{\sqrt{\det(g_{ij})}}\partial_i \left(g^{ij}\sqrt{\det(g_{ij})}\right)\partial_ju=\Delta u,
	\end{align}
	we get on $B_r(x)\setminus \Sigma$ that 
	\begin{align}\label{e:vQ}
		g^{ij}\partial_i\partial_j v=2g^{ij}\partial_i\varphi \partial_ju+ug^{ij}\partial_i\partial_j\varphi+\frac{\varphi}{\sqrt{\det(g_{ij})}} \partial_i \left(g^{ij}\sqrt{\det(g_{ij})}\right)\partial_ju+\Delta(\varphi u):=Q(u,\partial u,\partial g,\varphi),
	\end{align}
	Then $Q\in L^{q}$ for any  $q<p$.
By solving $g^{ij}\partial_i\partial_jw=Q(u,\partial u,\partial g,\varphi)$ on $B_r(x)$ with $w=v=0$ on $\partial B_r(x)$, we have by Theorem 9.15 of \cite{GTru} that $w\in W^{2,q}$ for any $q<p$. Furthermore, let us check as Shi-Tam \cite{ShiTam} that $w-v\equiv 0$, which will imply $v\in W^{2,q}$ for any $q<p$. Actually, 
 we have in $B_r(x)\setminus \Sigma$ that
\begin{align}\label{e:diveregencew-v}
	g^{ij}\partial_i\partial_j(w-v)=0,
\end{align}
For any $\epsilon>0$,  assume $\psi_\epsilon$ is a cut-off function of $\Sigma$ from Lemma \ref{l:cut-off} satisfying $\psi_\epsilon\equiv 1$ on $M\setminus B_\epsilon(\Sigma)$ and $\psi_\epsilon$ vanishes in a neighborhood of $\Sigma$, and $\lim_{\epsilon\to 0}\int_M|\nabla \psi_\epsilon|^q(x)dx=0$ with $q=\frac{p}{p-1}$. 
Multiplying $(w-v)\psi_\epsilon$ to both sides of \eqref{e:diveregencew-v} and integrating by parts, we get 
\begin{align}\label{e:partialw-v}
	-\int_{B_r(x)} g^{ij}\psi_\epsilon\partial_i(w-v)\partial_j(w-v)d\mu_g= \int_{B_r(x)}g^{ij}(w-v)\partial_i\psi_{\epsilon}\partial_j(w-v)d\mu_g+\int_{B_r(x)}(w-v)\psi_\epsilon \partial_ig^{ij}\partial_j(w-v)d\mu_g
\end{align}
When $n\le p<\infty$, noting that $w-v\in W^{1,q}$ for any $q<\infty$ and $\lim_{\epsilon\to 0}\int_M|\nabla \psi_\epsilon|^{q'}(x)dx=0$ for some $q'>1$, letting $\epsilon\to 0$, we have
\begin{align}
	\lim_{\epsilon\to 0}|\int_{B_r(x)}g^{ij}(w-v)\partial_i\psi_{\epsilon}\partial_j(w-v)d\mu_g|\le \lim_{\epsilon\to 0}C\left(\int_{B_r(x)}|\nabla\psi_{\epsilon}|^{q'}|d\mu_g\right)^{1/q'}\left(\int_{B_r(x)}|\nabla (w-v)|^{q'/(q'-1)}d\mu_g\right)^{1-1/q'}=0.
\end{align}
When $p=\infty$, by Lemma \ref{l:gradient_plargern}, $v-w\in W^{1,\infty}$. Noting that $\lim_{\epsilon\to 0}\int_M|\nabla \psi_\epsilon|(x)d\mu_g=0$, letting $\epsilon\to 0$, we have 
\begin{align}
	\lim_{\epsilon\to 0}|\int_{B_r(x)}g^{ij}(w-v)\partial_i\psi_{\epsilon}\partial_j(w-v)dd\mu_g|\le \lim_{\epsilon\to 0}C\int_{B_r(x)}|\nabla\psi_{\epsilon}|d\mu_g=0.
\end{align}
Therefore, we get from \eqref{e:partialw-v} that
\begin{align}
		-\int_{B_r(x)} g^{ij}\partial_i(w-v)\partial_j(w-v)d\mu_g=\int_{B_r(x)}(w-v) \partial_ig^{ij}\partial_j(w-v)d\mu_g.
\end{align}
Therefore, by Cauchy inequality we get 
\begin{align}
	\int_{B_r(x)} g^{ij}\partial_i(w-v)\partial_j(w-v)d\mu_g\le \left(\int_{B_r(x)}(w-v)^{2n/(n-2)}d\mu_g\right)^{(n-2)/2n}\left(\int_{B_r(x)}|\partial_j(w-v)|^2d\mu_g\right)^{1/2} \left(\int_{B_r(x)}|\partial g^{ij}|^nd\mu_g\right)^{1/n}.
\end{align}
Since $g_{ij}\in W^{1,n}$, for any $\delta>0$, by choosing $r=r(\delta)$ small we have $\int_{B_r(x)}|\partial g^{ij}|^ndx\le \delta^n$. Therefore we get 
\begin{align}
	\int_{B_r(x)} |\partial_i(w-v)|^2d\mu_g\le C\delta \left(\int_{B_r(x)}(w-v)^{2n/(n-2)}d\mu_g\right)^{(n-2)/2n}\left(\int_{B_r(x)}|\partial_j(w-v)|^2d\mu_g\right)^{1/2}.
\end{align}
Thus 
\begin{align}
	\int_{B_r(x)} |\partial_i(w-v)|^2d\mu_g \le C\delta^2\left(\int_{B_r(x)}(w-v)^{2n/(n-2)}d\mu_g\right)^{(n-2)/n}\le C\delta^2 \int_{B_r(x)} |\partial_i(w-v)|^2d\mu_g,
\end{align}
where we have used Sobolev inequality $\left(\int_{B_r(x)}f^{2n/n-2}d\mu_g\right)^{(n-2)/n}\le C_g \int_{B_r(x)}|\nabla f|^2d\mu_g$ for compact support function $f=w-v$.  Noting that the constant $C$ is independent of $r$, if $\delta$ is small enough, $\int_{B_r(x)} |\partial_i(w-v)|^2d\mu_g$ must vanish.  Since $w-v\equiv 0$ on $\partial B_r(x)$, hence we have proved $w-v\equiv 0$.  Thus $v\in W^{2,q}$ for any $q<p$. In particular, $u\in W^{2,q}$ for any $q<p$.

Furthermore, for $n<p<\infty$, noting that $u\in W^{2,q}$ for $q<p$, we can choose $q>n$, thus by Sobolev embedding, $|\nabla u|$ is bounded (see also  Lemma \ref{l:gradient_plargern} ). Therefore, the function $Q$ in \eqref{e:vQ} satisfies $Q\in L^p$. The same argument as above gives that $u\in W^{2,p}$. This completes the proof of the lemma.
\end{proof}

\begin{remark}
	Note that in a local coordinate $\nabla^2u(\partial_i,\partial_j)=\partial_i\partial_j u-\Gamma_{ij}^k\partial_ku$.  If $u\in W^{2,p}$ and $g\in W^{1,p}$ then $|\nabla^2u|\in L^p$.
\end{remark}
\begin{remark}[$W^{2,q}$-estimates]\label{r:W2qestimate}
	By the same argument as above, we see that if $\Delta u\in L^\infty$ and $|\nabla u|\in L^\infty$ then $u\in W^{2,p}$ when $n\le p<\infty$ and $u\in W^{2,q}$ for any $q<\infty$ when $p=\infty$. 
\end{remark}

\begin{lemma}\label{l:weakbochner_boundedlap}
	Assume as Theorem \ref{t:RCDnonnegative}, $n\le p < \infty$ and $u\in D(\Delta) $ satisfying $\Delta u\in L^\infty$. Then for each $s>0$, the following distributional Bochner inequality holds:
	\begin{align}\label{e:bochnerinequalityharmonicbounded}
\int_{B_1(x)} \langle \nabla \varphi,\nabla \sqrt{|\nabla u|^2+s}\rangle d\mu_g\le & \int_{B_1(x)}\left(\frac{\varphi(\Delta u)^2}{\sqrt{|\nabla u|^2+s}}-\varphi\frac{\Delta u}{|\nabla u|^2+s} \langle \nabla u,\nabla \sqrt{|\nabla u|^2+s}\rangle +\frac{\Delta u}{\sqrt{|\nabla u|^2+s}} \langle  \nabla \varphi,\nabla u\rangle\right)d\mu_g,
	\end{align}
	where $\varphi\in W^{1,2}(B_1(x))$ is any nonnegative compactly support function.
\end{lemma}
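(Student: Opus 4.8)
The plan is to prove \eqref{e:bochnerinequalityharmonicbounded} first for test functions supported away from the singular set $\Sigma$, where $g$ is smooth and Ricci flat so that a classical Bochner argument applies, and then to pass to general $\varphi$ by excising a shrinking neighbourhood of $\Sigma$ with the cut-off functions of Lemma \ref{l:cut-off}. First I would record the regularity available here: by Proposition \ref{p:gradientdestimate_Deltabound} we have $|\nabla u|\in L^\infty(B_1(x))$, and then Remark \ref{r:W2qestimate} gives $u\in W^{2,p}(B_1(x))$ for $n\le p<\infty$; hence $w:=\sqrt{|\nabla u|^2+s}$ satisfies $w\ge\sqrt s$, $w\in W^{1,p}$, and the a.e. Kato-type bound $|\nabla w|^2=|\nabla^2u(\nabla u,\cdot)|^2/w^2\le|\nabla^2u|^2$ shows $|\nabla w|\in L^p$. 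Since $g$ is smooth off $\Sigma$, elliptic regularity also gives $u\in W^{2,q}_{loc}(B_1(x)\setminus\Sigma)$ for every $q<\infty$. With these facts every term in \eqref{e:bochnerinequalityharmonicbounded} is locally integrable, and the natural class of test functions is $W^{1,p/(p-1)}$, which contains $W^{1,2}$.

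For the step away from $\Sigma$, fix a nonnegative smooth $\varphi$ with $\mathrm{supp}\,\varphi\subset B\Subset B_1(x)\setminus\Sigma$, where $g$ is smooth and $\Ric_g\equiv 0$ on the ball $B$. I would mollify $f:=\Delta u$ to smooth $f_j\to f$ in every $L^q(B)$ with $\sup_B|f_j|$ bounded uniformly, solve the Dirichlet problem $\Delta_g u_j=f_j$ in $B$ with $u_j=u$ on $\partial B$, and note $u_j\in C^\infty(\bar B)$ with $u_j\to u$ in $W^{2,q}(B)$ for all $q<\infty$ by standard elliptic estimates. For the smooth $u_j$, since $\Ric_g=0$ Bochner's formula reads $\tfrac12\Delta|\nabla u_j|^2=|\nabla^2u_j|^2+\langle\nabla f_j,\nabla u_j\rangle$, so with $w_j:=\sqrt{|\nabla u_j|^2+s}$ the Kato inequality gives the pointwise bound $w_j\Delta w_j\ge\langle\nabla f_j,\nabla u_j\rangle$; multiplying by $\varphi/w_j\ge 0$, integrating by parts to put the Laplacian on $\varphi$, and integrating by parts a second time to remove the derivative from $f_j$ (using $\Delta_g u_j=f_j$ to simplify) produces precisely \eqref{e:bochnerinequalityharmonicbounded} with $(u,\Delta u,w)$ replaced by $(u_j,f_j,w_j)$. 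Letting $j\to\infty$ — using $u_j\to u$ in $W^{2,q}(B)$ for $q>n$ (so $\nabla u_j\to\nabla u$ uniformly and $\nabla w_j\to\nabla w$ in $L^q$), $f_j\to f$ in $L^q(B)$ with a uniform $L^\infty$ bound, and $w_j,w\ge\sqrt s>0$ — dominated convergence passes each integral to the limit, giving \eqref{e:bochnerinequalityharmonicbounded} for $u$ and smooth $\varphi$. Because the inequality is continuous in $\varphi$ in the $W^{1,p/(p-1)}$-topology, it extends to every nonnegative $\varphi\in W^{1,p/(p-1)}$ with compact support in $B_1(x)\setminus\Sigma$.

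Finally I would remove the restriction that $\varphi$ avoid $\Sigma$. For a general nonnegative compactly supported $\varphi\in W^{1,2}(B_1(x))$ it suffices, by truncation, to treat $\varphi\in W^{1,2}\cap L^\infty$. Let $\{\eta_\epsilon\}$ be the cut-offs of Lemma \ref{l:cut-off}: $\eta_\epsilon\equiv1$ near $\Sigma$, $\mathrm{supp}\,\eta_\epsilon\subset B_\epsilon(\Sigma)$, $0\le\eta_\epsilon\le1$, and $\int_M|\nabla\eta_\epsilon|^{p/(p-1)}d\mu_h\to0$ as $\epsilon\to0$ — this is exactly where the hypothesis $\cH^{n-p/(p-1)}(\Sigma)<\infty$ is used. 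Applying the previous step to $(1-\eta_\epsilon)\varphi$, which is nonnegative, lies in $W^{1,p/(p-1)}$ and is compactly supported in $B_1(x)\setminus\Sigma$, and letting $\epsilon\to0$: since $\Sigma$ is Lebesgue-null and $1-\eta_\epsilon\to1$ a.e. with $0\le1-\eta_\epsilon\le1$, dominated convergence handles all terms not carrying $\nabla\eta_\epsilon$, while the two remaining error terms $\int\varphi\langle\nabla\eta_\epsilon,\nabla w\rangle$ and $\int\tfrac{\Delta u}{w}\varphi\langle\nabla\eta_\epsilon,\nabla u\rangle$ are bounded, via H\"older and the equivalence of $g$ with $h$, by $C\|\varphi\|_{L^\infty}\big(\|\nabla w\|_{L^p}+\|\nabla u\|_{L^\infty}\big)\|\nabla\eta_\epsilon\|_{L^{p/(p-1)}}\to0$. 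This yields \eqref{e:bochnerinequalityharmonicbounded} for all $\varphi\in W^{1,2}\cap L^\infty$, and since each term in \eqref{e:bochnerinequalityharmonicbounded} is continuous in $\varphi\in W^{1,2}(B_1(x))$, one last density step completes the proof.

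The main obstacle I anticipate is the middle step: with $\Delta u$ only in $L^\infty$ the function $u$ is $W^{2,q}$ but not $C^2$, even away from $\Sigma$, so the term $\langle\nabla\Delta u,\nabla u\rangle$ of the classical Bochner identity has no pointwise meaning — which is precisely why \eqref{e:bochnerinequalityharmonicbounded} is stated in the integrated-by-parts form involving only $\nabla u$, $\nabla w$ and $\Delta u$. The smooth approximation $u_j$ solving $\Delta_g u_j=f_j$ on a subdomain compactly contained in $B_1(x)\setminus\Sigma$ is the device that makes the Bochner computation legitimate and lets it survive the $W^{2,q}$-limit; a subsidiary technical point, addressed by phrasing the first step for $W^{1,p/(p-1)}$ test functions, is that the cut-off product $(1-\eta_\epsilon)\varphi$ need only lie in $W^{1,p/(p-1)}$, not in $W^{1,2}$.
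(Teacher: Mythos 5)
Your overall strategy — work away from $\Sigma$ where $g$ is smooth and Ricci flat, integrate by parts to reach the self-contained form \eqref{e:bochnerinequalityharmonicbounded}, then excise a shrinking neighbourhood of $\Sigma$ with the cut-offs of Lemma \ref{l:cut-off} — is the same as the paper's, and your middle step (Dirichlet approximation $\Delta_g u_j=f_j$ on a ball away from $\Sigma$) is a perfectly reasonable substitute for the paper's global $W^{2,q}$ smoothing $u_i\to u$. However, there is a genuine circularity in your opening regularity claim that breaks the argument precisely in the endpoint case $p=n$. You invoke Proposition \ref{p:gradientdestimate_Deltabound} to get $|\nabla u|\in L^\infty$ and then Remark \ref{r:W2qestimate} to get $u\in W^{2,p}$, hence $\nabla w\in L^p$. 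But the paper's proof of Proposition \ref{p:gradientdestimate_Deltabound} when $p=n$ is itself carried out via Moser iteration on the distributional Bochner inequality of the present Lemma \ref{l:weakbochner_boundedlap}, so you cannot assume it here. Without it, Lemma \ref{l:aprioriW2q_harmonic} only gives $u\in W^{2,q}$ for $q<n$, so $\nabla w\in L^q$ for $q<n$ only, and your final H\"older estimate $\int\varphi\langle\nabla\eta_\epsilon,\nabla w\rangle\lesssim\|\nabla w\|_{L^q}\|\nabla\eta_\epsilon\|_{L^{q/(q-1)}}$ would need cut-offs with $\|\nabla\eta_\epsilon\|_{L^{q/(q-1)}}\to 0$ at an exponent $q/(q-1)>n/(n-1)=p/(p-1)$. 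Those are not supplied by Lemma \ref{l:cut-off}: the hypothesis $\cH^{\,n-p/(p-1)}(\Sigma)<\infty$ controls only the $L^{p/(p-1)}$-norm of the cut-off gradients, and finiteness of a Hausdorff measure in one dimension gives no control on a lower-dimensional one, so such cut-offs need not exist (think of $\Sigma$ a submanifold of dimension $n-p/(p-1)$).

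The paper sidesteps this by reversing the order of limits: it applies the $\Sigma$-cut-off to globally smooth approximants $u_i$ (for which $|\nabla^2 u_i|\in L^p$ and $|\nabla u_i|\in L^\infty$ trivially, so the error terms $\int\varphi\,\Delta u_i\,\langle\nabla\psi_\epsilon,\nabla u_i\rangle/\sqrt{|\nabla u_i|^2+s}$ vanish as $\epsilon\to 0$), and only afterwards sends $i\to\infty$ using $u_i\to u$ in $W^{2,q}$, $q<p$. Your proof can be repaired by the same device: perform the $\eta_\epsilon$-excision on the smooth $u_j$ (or on a global $W^{2,q}$ mollification of $u$) rather than on $u$ itself, and only then pass to the limit in the smoothing parameter. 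For $n<p<\infty$ your argument is fine as written, since there Proposition \ref{p:gradientdestimate_Deltabound} rests on Lemma \ref{l:gradient_plargern}, not on the present lemma.
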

\begin{proof}
	By Lemma \ref{l:aprioriW2q_harmonic}, $u\in W^{2,q}$ for any $q<p$. Let $u_i\in C^\infty$ be a sequence smooth functions converging in $W^{2,q}$-sense to $u$.  Let us show for each $u_i$ that \eqref{e:bochnerinequalityharmonicbounded}
 holds. Actually, since $u_i\in C^\infty$, then $|\nabla^2u_i|\in L^p$ and $|\nabla u_i|\in L^\infty$ for each $n\le p<\infty$. By direct computations, we can get the Bochner formula  $B_1(x)\setminus \Sigma$ that 
	\begin{align}
		\Delta |\nabla u_i|^2=2|\nabla^2u_i|+2\langle \nabla \Delta u_i,\nabla u_i\rangle.
		\end{align}
		In particular, for any $s>0$ we have weakly on $B_1(x)\setminus \Sigma$ that 
	\begin{align}\label{e:weakDeltasqrtnablau}
		\Delta \sqrt{|\nabla u_i|^2+s}\ge \frac{\langle \nabla \Delta u_i,\nabla u_i\rangle}{\sqrt{|\nabla u_i|^2+s}}.
	\end{align}
	Let us show that this holds weakly on $B_1(x)$. For any $\epsilon>0$,  assume $\psi_\epsilon$ is a cut-off function of $\Sigma$ from Lemma \ref{l:cut-off} satisfying $\psi_\epsilon\equiv 1$ on $M\setminus B_\epsilon(\Sigma)$ and $\psi_\epsilon$ vanishes in a neighborhood of $\Sigma$, and $\lim_{\epsilon\to 0}\int_M|\nabla \psi_\epsilon|^q(x)dx=0$ with $q=\frac{p}{p-1}$. For any nonnegative $\varphi\in C^1_0(B_1(x))$, multiplying $\varphi \psi_\epsilon$ to both sides of \eqref{e:weakDeltasqrtnablau}
 and integrating by parts, we get 
 \begin{align}
 	\int_{B_1(x)} \langle \nabla (\varphi\psi_\epsilon),\nabla \sqrt{|\nabla u_i|^2+s}\rangle d\mu_g \le & \int_{B_1(x)}\left(\frac{(\varphi\psi_\epsilon)(\Delta u_i)^2}{\sqrt{|\nabla u_i|^2+s}}-(\varphi\psi_\epsilon)\frac{\Delta u_i}{|\nabla u_i|^2+s} \langle \nabla u_i,\nabla \sqrt{|\nabla u_i|^2+s}\rangle +\frac{\psi_\epsilon\Delta u_i}{\sqrt{|\nabla u_i|^2+s}} \langle  \nabla \varphi,\nabla u_i\rangle\right)d\mu_g\\  	&+\int_{B_1(x)}\frac{\varphi\Delta u_i}{\sqrt{|\nabla u_i|^2+s}} \langle  \nabla \psi_\epsilon,\nabla u_i\rangle d\mu_g.
 \end{align}

Since $|\nabla^2u_i|+|\Delta u_i|\in L^p$ and $|\nabla u_i|\in L^\infty$, letting $\epsilon\to 0$, we get 
 \begin{align}\label{e:distribution_nablau+1}
 	\int_{B_1(x)} \langle \nabla \varphi,\nabla \sqrt{|\nabla u_i|^2+s}\rangle d\mu_g \le & \int_{B_1(x)}\left(\frac{\varphi(\Delta u_i)^2}{\sqrt{|\nabla u_i|^2+s}}-\varphi\frac{\Delta u_i}{|\nabla u_i|^2+s} \langle \nabla u_i,\nabla \sqrt{|\nabla u_i|^2+s}\rangle +\frac{\Delta u_i}{\sqrt{|\nabla u_i|^2+s}} \langle  \nabla \varphi,\nabla u_i\rangle\right)d\mu_g.
 \end{align}

Noting that $u_i\to u$ in $W^{2,q}$-sense for any $q<p$. Moreover, since $\varphi\in C^1_0(B_1(x))$, letting $i\to \infty$, we get 
 \begin{align}\label{e:distribution_nablauc1varphi}
 	\int_{B_1(x)} \langle \nabla \varphi,\nabla \sqrt{|\nabla u|^2+s}\rangle d\mu_g\le & \int_{B_1(x)}\left(\frac{\varphi(\Delta u)^2}{\sqrt{|\nabla u|^2+s}}-\varphi\frac{\Delta u}{|\nabla u|^2+s} \langle \nabla u,\nabla \sqrt{|\nabla u|^2+s}\rangle +\frac{\Delta u}{\sqrt{|\nabla u|^2+s}} \langle  \nabla \varphi,\nabla u\rangle\right)d\mu_g.
 \end{align}

Let $\varphi\in W^{1,2}$ be nonnegative compactly support function. Assume $\varphi_\alpha\in C_0^1(B_1(x))$ is a sequence of nonnegative functions approximating $\varphi$ in $W^{1,2}$-sense. Noting that $\Delta u\in L^\infty$, applying each $\varphi_\alpha$ to \eqref{e:distribution_nablauc1varphi} and letting $\alpha\to \infty$, we conclude that 
 \begin{align} 	\int_{B_1(x)} \langle \nabla \varphi,\nabla \sqrt{|\nabla u|^2+s}\rangle d\mu_g\le & \int_{B_1(x)}\left(\frac{\varphi(\Delta u)^2}{\sqrt{|\nabla u|^2+s}}-\varphi\frac{\Delta u}{|\nabla u|^2+s} \langle \nabla u,\nabla \sqrt{|\nabla u|^2+s}\rangle +\frac{\Delta u}{\sqrt{|\nabla u|^2+s}} \langle  \nabla \varphi,\nabla u\rangle\right)d\mu_g,
 \end{align}
 holds also for $W^{1,2}$ functions. This completes the proof.
\end{proof}

Now we are ready to prove Proposition \ref{p:gradientdestimate_Deltabound}. 
\begin{proof}[Proof of Proposition \ref{p:gradientdestimate_Deltabound} ]

If $n<p\le \infty$, Proposition \ref{p:gradientdestimate_Deltabound} follows immediately from Lemma \ref{l:gradient_plargern}.

If $p=n$, by the Bochner inequality in Lemma \ref{l:weakbochner_boundedlap}, the Proposition is standard by Moser iteration. For the sake of convenience, we give a proof here.  By Lemma \ref{l:weakbochner_boundedlap}, we have for any $s>0$ and nonnegative $\varphi\in W^{1,2}_0(B_1(x))$ that
\begin{align*}
 	\int_{B_1(x)} \langle \nabla \varphi,\nabla \sqrt{|\nabla u|^2+s}\rangle d\mu_g  \le & \int_{B_1(x)}\left(\frac{\varphi(\Delta u)^2}{\sqrt{|\nabla u|^2+s}}-\varphi\frac{\Delta u}{|\nabla u|^2+s} \langle \nabla u,\nabla \sqrt{|\nabla u|^2+s}\rangle +\frac{\Delta u}{\sqrt{|\nabla u|^2+s}} \langle  \nabla \varphi,\nabla u\rangle\right)d\mu_g .
 \end{align*}
 We denote $v= \sqrt{|\nabla u|^2+s}$, $f=\frac{(\Delta u)^2}{\sqrt{|\nabla u|^2+s}}$.
 Then (5.18) becomes
 \begin{align}
\int_{B_1(x)} \langle \nabla \varphi,\nabla v\rangle d\mu_g  \le \int_{B_1(x)}\left(\varphi f-\varphi \frac{\Delta u}{|\nabla u|^2+s} \langle \nabla u,\nabla v\rangle + v^{-1}\Delta u\langle  \nabla \varphi, \nabla u\rangle\right)d\mu_g .
 \end{align}
 If $\Delta u\equiv 0$, we will let $s\to 0^{+}$ in the end of the following argument. If $\Delta u\not\equiv 0$, we denote $D=\sup_{B_1(x)}|\Delta u|$, and we let $s=D^2$. Let $\varphi= \eta^2 v^{2q-1}$, where $q\ge 1$ and $\eta $ is some cut-off function in $C^\infty_0(B_1(x))$. By Lemma \ref{l:aprioriW2q_harmonic}, $\varphi\in W^{1,2}_0(B_1(x))$.  Noting that  $D\le v$ and $|\nabla u|\le v$ we have by H\"older inequality that
\begin{align} \label{B3}
&(2q-1)\int_{B_1(x)} v ^{2q-2}\eta^2|\nabla v |^2d\mu_g+2\int_{B_1(x)} v ^{2q-1}\eta\langle\nabla v ,\nabla \eta\rangle d\mu_g\notag\\
\le &\int_{B_1(x)}f\eta^2v ^{2q-1} d\mu_g+D\int_{B_1(x)}\left|\eta^2 v^{2q-3}\langle \nabla u,\nabla v\rangle\right| d\mu_g\notag\\
&+ 2D\int_{B_1(x)}\left|\langle \nabla u,\nabla\eta\rangle\eta v ^{2q-2}\right|d\mu_g+(2q-1)D\int_{B_1(x)}\left|\langle \nabla u,\nabla v \rangle\eta^2v ^{2q-3}\right|d\mu_g\notag\\ 
\le & \int_{B_1(x)}\eta^2 v^{2q}d\mu_g+2\int_{B_1(x)}|\nabla\eta|\eta v ^{2q} d\mu_g+2q\int_{B_1(x)} |\nabla v |\eta^2v ^{2q-1}d\mu_g. 
\end{align}

Denote $w=v ^{q}$, since $|\nabla u|\le  v$, by H\"older inequality and Sobolev inequality, we can estimate each term:
\begin{align}\label{B4}
(2q-1)\int_{B_1(x)} v ^{2q-2}\eta^2|\nabla v |^2d\mu_g=\frac{2q-1}{q^2}\int_{B_1(x)}\eta^2|\nabla w|^2d\mu_g,
\end{align}
and 
\begin{align}\label{B5}
2\int_{B_1(x)}| v ^{2q-1}\eta\langle\nabla v ,\nabla \eta\rangle| d\mu_g
&\le \frac{1}{4q}\int_{B_1(x)}\eta^2|\nabla w|^2d\mu_g+\frac{4}{q}\int_{B_1(x)}w^2|\nabla\eta|^2d\mu_g,
\end{align}
and 
\begin{align} \label{B7}
 \int_{B_1(x)}\left| \eta^2v ^{2q }\right|d\mu_g   \leq   \int_{B_1(x)}\eta^2 w^2d\mu_g,
\end{align}
\begin{align}\label{B6}
  2  \int_{B_1(x)}  |\nabla \eta|\eta v^{2q} d\mu_g \le  \int_{B_1(x)}|\nabla \eta|^2 w^2d\mu_g+ \int_{B_1(x)}\eta^2 w^2d\mu_g,
\end{align}
and
\begin{align} \label{B9}
2q \int_{B_1(x)}|\nabla v|\eta^2 v^{2q-1}d\mu_g
 \le \frac{1}{8q}  \int_{B_1(x)}|\nabla w|^2 \eta^2d\mu_g+8 q\int_{B_1(x)}\eta^2 w^2d\mu_g.
\end{align}

Apply (\ref{B4})--(\ref{B9}) to inequality (\ref{B3}), we get

\begin{align*}
\int_{B_1(x)}\eta^2|\nabla w|^2d\mu_g\leq \left(100q \right)^2\int_{B_1(x)}w^2(|\nabla \eta|^2+\eta^2)d\mu_g.
\end{align*}

Let $\eta=1$ on $B_r(x)$, $\eta=0$ on $M\backslash B_R(x)$, $\eta\le 1$ on $B_1(x)$ and $|\nabla \eta|\le \frac{C(g)}{(R-r)^2}$ on $B_1(x)$ .
Then we have
\begin{align} 
\int_{B_r(x)}|\nabla w|^2d\mu_g
\le
 \left(\frac{100q }{R-r}\right)^2\int_{B_1(x)}w^2d\mu_g.
\end{align}
Denote $\chi=\frac{n}{n-2}>1$, then by Sobolev inequality, we have
\begin{align} 
\left(\int_{B_r(x)}w^{2\chi}d\mu_g\right)^{\frac{1}{\chi}}
\le
\left(\frac{100q )}{R-r}\right)^2\int_{B_1(x)}w^2d\mu_g.
\end{align}
Thus we have
\begin{align} 
\left(\int_{B_r(x)}v^{2q\chi}d\mu_g\right)^{\frac{1}{2q\chi}}
\le
\left(\frac{100q }{R-r}\right)^{\frac{1}{q}}\left(\int_{B_1(x)}v^{2q}d\mu_g\right)^{\frac{1}{2q}}.
\end{align}
Take $q_i=\chi^i$, $r_i=\frac{1}{2}+\frac{1}{2^{i+1}}$, for $i=0,1,2,\ldots$, then we have
\begin{align}
\|v\|_{L^{2q_{i+1}}(B_{r_{i+1}}(x))}
\le \left(100(2\chi)^i \right)^{\frac{1}{\chi^i}}\|v\|_{L^{2q_i }(B_{r_{i }}(x))}.
\end{align}
By iteration we have
\begin{align}
\|v\|_{L^{2q_{i+1}}(B_{r_{j+1}}(x))}
\le (2\chi)^{\Sigma_{i=0}^j\frac{i}{\chi ^i}}\left( 100 \right)^{\Sigma_{i=0}^j\frac{1}{\chi^i}}\|v\|_{L^{2 }(B_1(x))}.
\end{align}
Since the seris $\Sigma_{i=0}^\infty\frac{i}{\chi ^i}$ and $\Sigma_{i=0}^\infty\frac{1}{\chi ^i}$ are both converge, we can let $i\to \infty$, and get
\begin{align}
\|v\|_{L^{\infty}(B_{\frac{1}{2}}(x))}
\le (2\chi)^{\Sigma_{i=0}^\infty\frac{i}{\chi ^i}}\left( 100 \right)^{\Sigma_{i=0}^\infty\frac{1}{\chi^i}}\|v\|_{L^{2 }(B_1(x))}\le C(n) ||v||_{L^2(B_1(x))}.
\end{align}
Thus we have
\begin{align}
\sup_{B_\frac{1}{2}(x)} |\nabla u|^2
\le C(n,g)\left(\int_{B_1(x)}|\nabla u|^2d\mu_g+\sup_{B_1(x)}|\Delta u|^2\right),
\end{align}
here and below $C(n,g)$ will denote a positive constant depending only on $n$, $\|g\|_{L^{\frac{n}{2}}}$ and the $L^2$ Sobolev constant in $B_1(x)$. Thus $|\nabla u|$ is bounded.

Moreover, if $\Delta u=0$, by rescaling we have 
	\begin{align}\label{e:gradient_byL2gradient}
		\sup_{B_{1/2}(x)}|\nabla u|^2\le C(n,g)\fint_{B_{3/4}(x)}|\nabla u|^2(y)d\mu_g(y).
	\end{align}

	Let $\varphi$ be a cut-off function satisfying $\varphi\equiv 1$ in $B_{3/4}(x)$ and ${\rm supp }\varphi\subset B_1(x)$ and $|\nabla \varphi|\le (n,g)$.  Multiplying $u\varphi^2$ to $\Delta u=0$ and integrating we have 
	\begin{align}
		0=\int_{B_1(x)}\varphi^2 u\Delta u d\mu_g(y)= -\int_{B_1(x)}\varphi^2 |\nabla u|^2d\mu_g(y)-\int_{B_1(x)}2\varphi u\langle \nabla \varphi,\nabla u\rangle d\mu_g(y).
	\end{align}
	Thus by Cauchy inequality, we get 
	\begin{align}
		\int_{B_1(x)}\varphi^2 |\nabla u|^2d\mu_g(y)\le 4\int_{B_1(x)}|\nabla \varphi|^2u^2d\mu_g(y)\le C(n,g)\int_{B_1(x)}u^2d\mu_g(y).
	\end{align}
	Combining with \eqref{e:gradient_byL2gradient}, we have
	\begin{align}
		\sup_{B_{1/2}(x)}|\nabla u|^2\le C(n,g)\fint_{B_1(x)}|\nabla u|^2d\mu_g(x).
	\end{align}
This finishes the proof of Proposition \ref{p:gradientdestimate_Deltabound}.
\end{proof}

\begin{proposition}[Heat kernel estimates]\label{p:heatkernel}
	Assume as Theorem \ref{t:RCDnonnegative}, then the heat kernel $\rho_t(x,y)$ of $(M^n,g)$ satisfies for $0<t\le 1$ that 
	\begin{itemize}
	   \item[(1)]  $\rho_t(x,y)=\rho_t(y,x)>0$ and $(\partial_t-\Delta_x)\rho_t(x,y)=0$ for all $t>0, x,y\in M$.
	   \item[(2)] $\lim_{t\to 0}\rho_t(x,y)=\delta_x(y)$.
		\item[(3)]  $\rho_t(x,y) \le Ct^{-n/2}e^{-d^2(x,y)/C}$ for all $x,y\in M$.
		\item[(4)]  $|\nabla_x\rho_t(x,y)|\le Ct^{-(n+1)/2}e^{-d^2(x,y)/C}$ for all $x,y\in M$.
	\end{itemize}
	where the constant $C$ depends on $(M^n,g)$. 
\end{proposition}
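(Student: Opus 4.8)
The plan is to build $\rho_{t}$ as the integral kernel of the heat semigroup attached to $g$, to read off the on-diagonal bound and the elementary properties from the $L^{2}$-Sobolev inequality, to produce the Gaussian factor by Davies' method, and to deduce the gradient bound from the elliptic estimate of Proposition~\ref{p:gradientdestimate_Deltabound}. Concretely, I would work with the strongly local, regular Dirichlet form $\mathcal E(u,v)=\int_{M}\langle\nabla u,\nabla v\rangle_{g}\,d\mu_{g}$ on $L^{2}(M,\mu_{g})$, which has $C_{0}^{\infty}(M)$ as a core and generator $\Delta_{g}$, and hence generates a symmetric, positivity-preserving semigroup $P_{t}=e^{t\Delta_{g}}$. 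Two structural facts are needed: the intrinsic distance of $\mathcal E$ coincides with the Riemannian distance $d=d_{g}$ (standard for continuous Riemannian metrics, since $(M,d_{g})$ is a length space whose length structure is induced by $g$) and $(M,d_{g})$ is complete; moreover asymptotic flatness gives Euclidean volume growth, so $P_{t}$ is conservative and $\mu_{g}(B_{r}(x))\le e^{Cr^{2}}$. By the $L^{2}$-Sobolev inequality recalled above (valid on every asymptotically flat manifold, cf.\ \cite[Lemma~3.1]{Schoen1979}) and the Varopoulos equivalence between the Sobolev inequality and ultracontractivity, $\|P_{t}\|_{L^{1}\to L^{\infty}}\le Ct^{-n/2}$ for $0<t\le1$. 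Hence $P_{t}$ is represented by a bounded symmetric kernel $\rho_{t}(x,y)$ with $0\le\rho_{t}(x,y)\le Ct^{-n/2}$ and $\lim_{t\to0}\rho_{t}(x,\cdot)\,d\mu_{g}=\delta_{x}$; since in every local chart $\Delta_{g}$ has bounded measurable coefficients, De Giorgi--Nash--Moser theory for $\partial_{t}-\Delta_{g}$ shows that $\rho_{t}(\cdot,y)$ is a locally H\"older, genuine solution of $(\partial_{t}-\Delta_{x})\rho_{t}=0$ and that $\rho_{t}>0$ by the parabolic Harnack inequality and connectedness of $M$. This gives (1) and (2).

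For (3) I would run Davies' exponential perturbation argument: strong locality of $\mathcal E$ gives the Davies--Gaffney bound $\lvert\langle P_{t}f,h\rangle\rvert\le e^{-d(\operatorname{supp}f,\operatorname{supp}h)^{2}/(4t)}\|f\|_{2}\|h\|_{2}$ for free, and conjugating $P_{t}$ by $e^{\pm\psi}$ with $\psi$ bounded Lipschitz, $|\nabla\psi|\le1$, combining with the ultracontractivity bound and optimizing over $\psi$ upgrades the on-diagonal bound to $\rho_{t}(x,y)\le Ct^{-n/2}e^{-d^{2}(x,y)/(Ct)}\le Ct^{-n/2}e^{-d^{2}(x,y)/C}$ for $0<t\le1$. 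As a standard consequence of this estimate and the analyticity of $P_{t}$ in a complex sector (Cauchy's formula for $\partial_{t}^{k}\rho_{t}$ combined with the Gaussian bound at complex times), one also obtains $\lvert\partial_{t}\rho_{t}(x,y)\rvert=\lvert\Delta_{x}\rho_{t}(x,y)\rvert\le Ct^{-n/2-1}e^{-d^{2}(x,y)/(Ct)}$ for $0<t\le1$.

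For (4), fix $y$ and $0<t\le1$ and set $u=\rho_{t}(\cdot,y)$, which lies in $D(\Delta_{g})$ by the smoothing of $P_{t}$ and solves $\Delta_{g}u=f$ with $f=\partial_{t}\rho_{t}(\cdot,y)\in L^{\infty}$. Applying the scale-$\sqrt t$ version of Proposition~\ref{p:gradientdestimate_Deltabound} (its proof runs verbatim on $B_{\sqrt t}(x)$, and its constant is uniform in $t\in(0,1]$ by the scale invariance of the $L^{2}$-Sobolev inequality and asymptotic flatness, the Ricci-flatness off $\Sigma$ and the cut-off functions of Lemma~\ref{l:cut-off} entering exactly as in the case $p=n$) yields
\[
\sup_{B_{\sqrt t/2}(x)}|\nabla u|^{2}\le C\Bigl(\fint_{B_{\sqrt t}(x)}|\nabla u|^{2}\,d\mu_{g}+t\sup_{B_{\sqrt t}(x)}|f|^{2}\Bigr).
\]
Estimating the first term by the Caccioppoli inequality $\fint_{B_{\sqrt t}(x)}|\nabla u|^{2}\le Ct^{-1}\fint_{B_{2\sqrt t}(x)}u^{2}\,d\mu_{g}+C\fint_{B_{2\sqrt t}(x)}|u|\,|f|\,d\mu_{g}$ and inserting the Gaussian bounds for $\rho_{t}$ and $\partial_{t}\rho_{t}$ (using $d(z,y)\ge\tfrac12 d(x,y)$ for $z\in B_{2\sqrt t}(x)$ when $d(x,y)\ge4\sqrt t$, the complementary range being trivial) produces $|\nabla_{x}\rho_{t}(x,y)|\le Ct^{-(n+1)/2}e^{-d^{2}(x,y)/C}$.

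The main obstacle is step (4): Proposition~\ref{p:gradientdestimate_Deltabound} is purely local, so getting the sharp power $t^{-(n+1)/2}$ together with the Gaussian factor requires combining the parabolic rescaling with the complex-time (Davies--Gaffney) bound on $\partial_{t}\rho_{t}$, and --- in the critical case $p=n$ --- keeping the constant of Proposition~\ref{p:gradientdestimate_Deltabound} uniform as the scale $\sqrt t\to0$, which is precisely where the $C^{0}\cap W^{1,p}$ regularity and the Ricci-flatness off $\Sigma$ are used. Parts (1)--(3) are routine Dirichlet-form theory once the $L^{2}$-Sobolev inequality and the strong locality of $\mathcal E$ are at hand.
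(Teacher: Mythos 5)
Your proof is correct, but it is genuinely more self-contained than the paper's: the paper disposes of (3) by citing Theorem 0.2 of Sturm \cite{Sturm} (which packages exactly the Dirichlet-form / Davies machinery you run) and of (4) by citing Theorem 1.2 of Coulhon--Jiang--Koskela--Sikora \cite{CJKS}, whose hypotheses are supplied by Proposition \ref{p:gradientdestimate_Deltabound}. The paper even remarks that ``one can also argue as the gradient estimates of harmonic functions to get the heat kernel gradient estimates'' --- your step (4) is precisely that alternative, carried out via a parabolic rescaling of Proposition \ref{p:gradientdestimate_Deltabound} plus Caccioppoli plus the Gaussian bound on $\partial_t\rho_t$. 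The trade-off: the paper's route is two citations; yours makes the argument self-contained at the cost of having to verify that the constant in Proposition \ref{p:gradientdestimate_Deltabound} is scale-invariant (which you correctly flag as the crux --- for $p=n$ the $W^{1,n}$-norm of $g$ is scale-invariant and indeed decreases at small scales, and the local $L^2$-Sobolev constant is uniform on small balls by asymptotic flatness, so this works but deserves the explicit check) and to produce the bound on $\partial_t\rho_t=\Delta_x\rho_t$, which you get by the standard complex-time Davies--Gaffney argument. One small remark on step (4): the quantitative estimate you invoke for $\Delta u\in L^\infty$ is established inside the \emph{proof} of Proposition \ref{p:gradientdestimate_Deltabound} (the Moser iteration gives $\sup_{B_{1/2}}|\nabla u|^2\le C(\int_{B_1}|\nabla u|^2+\sup_{B_1}|\Delta u|^2)$) rather than appearing in its statement, which only records the harmonic case; you correctly appeal to the proof, but it is worth saying so explicitly.
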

\begin{proof}
	(1) and (2) are the basic properties of heat kernel. To see (3), noting that the metric is $C^0$ and asymptotically flat,  the upper bound (3) follows directly by Theorem 0.2 of \cite{Sturm}. Noting the gradient estimates for harmonic functions in Proposition \ref{p:gradientdestimate_Deltabound},  (4) is a consequence of Theorem 1.2 of \cite{CJKS}.  Hence we complete the proof. Actually, one can also argue as the gradient estimates of harmonic functions to get the heat kernel gradient estimates. 
\end{proof}
\vskip 3mm

\begin{lemma}[$W^{1,2}$-approximation]\label{l:w12approximateheatkernel}
	Assume as Theorem \ref{t:RCDnonnegative} and let $u\in D(\Delta)$ and $\Delta u\in W^{1,2}$. Define $u_t(x)=\int_M\rho_t(x,y)u(y)dy$. Then 
	\begin{enumerate}
		\item $u_t\to u$ in $W^{1,2}$-sense on any compact subset as $t\to 0$.
		\item $\Delta u_t\to \Delta u$ in $L^2$-sense on any compact subset as $t\to 0$.
		\item For any $t>0$,  $|\nabla u_t|,|\Delta u_t|, |\nabla \Delta u_t|$ are bounded.
		\item For any $t>0$, $u_t\in W^{2,p}$ when $n\le p<\infty$, and $u_t\in W^{2,q}$ for any $q<\infty$ when $p=\infty$.
	\end{enumerate}
	 
\end{lemma}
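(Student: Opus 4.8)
The plan is to identify $u_t$ with the action of the heat semigroup $P_t=e^{t\Delta}$ of the Dirichlet Laplacian, $u_t=P_tu$, and to read off the four assertions from the abstract semigroup properties together with the heat kernel bounds of Proposition \ref{p:heatkernel} and the a priori estimates already available (Proposition \ref{p:gradientdestimate_Deltabound}, Lemma \ref{l:aprioriW2q_harmonic}, Remark \ref{r:W2qestimate}). First I would record the ingredients: $P_t$ is a strongly continuous, self-adjoint contraction semigroup on $L^2(M)$; for $u\in D(\Delta)$ one has $P_tu\in D(\Delta)$ and $\Delta P_tu=P_t\Delta u$; and, since $(M,g)$ is asymptotically flat, it has at most Euclidean volume growth (this is property (2) in Definition \ref{R}), so $\sup_{x\in M}\int_Me^{-d^2(x,y)/C}\,d\mu_g(y)<\infty$ for every $C>0$.

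For (1) and (2): as $u\in D(\Delta)$ and $\Delta u\in W^{1,2}(M)\subset L^2(M)$, strong continuity of $P_t$ immediately gives $u_t=P_tu\to u$ in $L^2$ and $\Delta u_t=P_t\Delta u\to\Delta u$ in $L^2$ as $t\to0$, which is (2). For the gradients I would use self-adjointness and the semigroup law to write
\[
\int_M|\nabla u_t|^2\,d\mu_g=\langle-\Delta u_t,u_t\rangle_{L^2}=\langle-\Delta u,P_{2t}u\rangle_{L^2}\xrightarrow[\,t\to0\,]{}\langle-\Delta u,u\rangle_{L^2}=\int_M|\nabla u|^2\,d\mu_g,
\]
so the norms $\|u_t\|_{W^{1,2}}$ remain bounded. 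Any weak $W^{1,2}$ subsequential limit of $u_t$ must equal $u$ because $u_t\to u$ in $L^2$, hence $\nabla u_t\rightharpoonup\nabla u$ weakly in $L^2$; weak convergence together with convergence of norms in the Hilbert space $L^2$ upgrades this to strong convergence $\nabla u_t\to\nabla u$ in $L^2$, proving (1). These convergences are in fact global, in particular they hold on every compact subset as stated.

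For (3) I would estimate directly from the kernel representation $u_t(x)=\int_M\rho_t(x,y)u(y)\,d\mu_g(y)$. The Gaussian bound of Proposition \ref{p:heatkernel}(3) and Cauchy--Schwarz give $\|u_t\|_{L^\infty}\le Ct^{-n/2}\big(\sup_x\int_Me^{-d^2(x,y)/C}d\mu_g\big)^{1/2}\|u\|_{L^2}$, and differentiating under the integral sign (legitimate thanks to the kernel gradient bound, which furnishes a Gaussian majorant) together with Proposition \ref{p:heatkernel}(4) gives $\|\nabla u_t\|_{L^\infty}\le Ct^{-(n+1)/2}\|u\|_{L^2}$. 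Since $\Delta u_t=P_t\Delta u$ with $\Delta u\in L^2$, the very same two estimates with $u$ replaced by $\Delta u$ bound $\|\Delta u_t\|_{L^\infty}$ and $\|\nabla\Delta u_t\|_{L^\infty}$; alternatively one may note $\Delta u_t\in D(\Delta)$ with $\Delta(\Delta u_t)=P_{t/2}\big(\Delta(P_{t/2}\Delta u)\big)\in L^\infty$ and apply Proposition \ref{p:gradientdestimate_Deltabound} to $\Delta u_t$. This yields (3).

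For (4), having shown in (3) that $\Delta u_t\in L^\infty$ and $|\nabla u_t|\in L^\infty$, I would simply apply the $W^{2,q}$-estimate of Remark \ref{r:W2qestimate} (the argument of Lemma \ref{l:aprioriW2q_harmonic}) to the function $u_t$, obtaining $u_t\in W^{2,p}_{loc}$ when $n\le p<\infty$ and $u_t\in W^{2,q}_{loc}$ for every $q<\infty$ when $p=\infty$. The only delicate point, and where I expect to spend most of the care, is the rigorous justification of the semigroup identity $\Delta P_tu=P_t\Delta u$ and of differentiation under the integral sign in this non-smooth, non-compact setting; both are consequences of the regularity and Gaussian estimates collected in Proposition \ref{p:heatkernel}, but they deserve an explicit word. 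All the remaining steps are either standard semigroup theory or direct citations of results proved above.
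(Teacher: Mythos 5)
Your proof is correct, and for parts (1) and (2) it takes a genuinely different route from the paper. The paper proves the $L^2$ convergence $u_t\to u$ by first reducing to continuous $u$, then treating $\rho_t(x,\cdot)$ as an approximate identity via the Gaussian upper bound of Proposition~\ref{p:heatkernel}(3) (which is why its statement is formulated on compact subsets, since that is where uniform continuity is used); it then upgrades $L^2$ convergence of $u_t$ and $\Delta u_t$ to local $W^{1,2}$ convergence of $u_t$ through a direct cut-off integration-by-parts estimate
\[
\int_M \varphi^2|\nabla (u_t-u)|^2 d\mu_g\le \tfrac12\int_M\varphi^2|\nabla(u_t-u)|^2 d\mu_g+2\int_M|\nabla\varphi|^2|u_t-u|^2 d\mu_g+\Bigl(\int_M|\Delta(u_t-u)|^2\varphi^2\Bigr)^{1/2}\Bigl(\int_M|u_t-u|^2\varphi^2\Bigr)^{1/2}.
\]
You instead invoke the abstract semigroup structure: strong $L^2$-continuity of $P_t$, $\Delta P_tu=P_t\Delta u$, and the identity $\|\nabla u_t\|_{L^2}^2=\langle -\Delta u,P_{2t}u\rangle\to\|\nabla u\|_{L^2}^2$, combined with weak convergence plus the Radon--Riesz property. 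This is slicker, avoids the approximation by continuous functions, and actually yields \emph{global} $W^{1,2}$ convergence, which is a slight strengthening of the stated local result; the paper's more hands-on route keeps the argument self-contained and visibly tied to the concrete heat-kernel bounds it has just proved, at the cost of only getting the local statement. Your treatment of (3) via the Gaussian bounds and of (4) via Remark~\ref{r:W2qestimate} coincides with the paper's (as a small remark, a cleaner route to the $L^\infty$ bound in (3) is $\int_M\rho_t(x,y)^2d\mu_g(y)=\rho_{2t}(x,x)\le Ct^{-n/2}$, which sidesteps the volume-integral estimate you invoke). You are right that the only delicate points are the validity of $\Delta P_t=P_t\Delta$ and differentiation under the integral sign, and both are indeed justified by the strong continuity/analyticity of the Dirichlet semigroup and the Gaussian majorants of Proposition~\ref{p:heatkernel}.
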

\begin{proof}
	Let us first show that $u_t\to u$ in $L^2$-sense. Since we can use continuous functions approximating $u$, for simplicity, let us assume $u$ is continuous. Then for any compact measurable subset $\Omega\subset M$ we have
	\begin{align}
		\int_{\Omega} |u_t(x)-u(x)|^2d\mu_g(x)&= \int_{\Omega}\left(\int_M u(y)\rho_t(x,y)d\mu_g(y)-u(x)\right)^2d\mu_g(x)\\
		&\le \int_{\Omega}\int_M |u(y)-u(x)|^2\rho_t(x,y)d\mu_g(y)d\mu_g(x).
	\end{align}
	By heat kernel upper bound estimate in Proposition \ref{p:heatkernel} and noting that $u$ is continuous we conclude that $\lim_{t\to 0}\int_{\Omega} |u_t(x)-u(x)|^2d\mu_g(x)=0$. Then, noting that $u_t(x)=\int_Mu(y)\rho_t(x,y)d\mu_g(y)$, we have 
	\begin{align}
		\Delta u_t(x)=\int_M u(y)\Delta_x\rho_t(x,y)d\mu_g(y)=\int_M u(y)\partial_t\rho_t(x,y)d\mu_g(y) =\int_Mu(y)\Delta_y\rho_t(x,y)d\mu_g(y)=\int_M\Delta u(y)\rho_t(x,y)d\mu_g(y). 
	\end{align}
	Therefore, the same argument as above shows that $\Delta u_t\to \Delta u$ in $L^2$-sense on any compact measurable subset, which proves (2). Let us now show $u_t\to u$ in $W^{1,2}$-sense. Actually, for any compact support cut-off function $\varphi$ we can compute 
	\begin{align}
		\int_M \varphi^2(x) |\nabla u_t-\nabla u|^2(x)d\mu_g(x)\le& \int_M |\Delta u_t-\Delta u|\cdot |u_t-u|\varphi^2(x)d\mu_g(x)+\int_M 2|\nabla \varphi|(x)|\nabla u_t-\nabla u|(x) |u_t-u|(x)\varphi(x)d\mu_g(x)\\
		\le& \frac{1}{2}\int_M \varphi^2(x) |\nabla u_t-\nabla u|^2(x)d\mu_g(x)+2\int_M |\nabla \varphi|^2 |u_t-u|^2(x)d\mu_g(x)\\
		&+\left(\int_M|\Delta u_t-\Delta u|^2\varphi^2(x)d\mu_g(x)\right)^{1/2}\left(\int_M|u_t- u|^2\varphi^2(x)d\mu_g(x)\right)^{1/2}.
	\end{align}
	Letting $t\to 0$, we get $\lim_{t\to 0}\int_M \varphi^2(x) |\nabla u_t-\nabla u|^2(x)d\mu_g(x)=0$. Hence we complete the proof of (1) and (2).

	(3) follows directly from the heat kernel gradient estimate in Proposition \ref{p:heatkernel}. By the gradient bound in (3), we get by Remark \ref{r:W2qestimate} that (4) holds. Hence we finish the whole proof. 
\end{proof}

\vskip 3mm

\begin{proposition}[Weak Bochner inequality]\label{p:weakbochnerinequaliyt}
	Assume as Theorem \ref{t:RCDnonnegative}. Then for any $u\in D(\Delta)$ such that $\Delta u\in W^{1,2}$, and any nonnegative bounded test function $\varphi\in D(\Delta)$ with $|\Delta \varphi|\in L^\infty$, the following Bochner inequality holds
	\begin{align}\label{e:weakbochnoerlowerbound}
		\frac{1}{2}\int_M \Delta \varphi |\nabla u|^2 d\mu_g-\int_M \varphi \langle \nabla \Delta u,\nabla u\rangle d\mu_g\ge \frac{1}{n}\int_M\varphi (\Delta u)^2d\mu_g. 
	\end{align}	
\end{proposition}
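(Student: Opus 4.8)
The plan is to prove \eqref{e:weakbochnoerlowerbound} first for the heat--regularizations $u_t$ of Lemma~\ref{l:w12approximateheatkernel}, and then let $t\to0$. Fix $t>0$ and set $v:=u_t$. By Lemma~\ref{l:w12approximateheatkernel}, $|\nabla v|,|\Delta v|,|\nabla\Delta v|\in L^\infty(M)$, $v\in W^{2,p}$ (resp.\ $v\in W^{2,q}$ for every $q<\infty$ when $p=\infty$), and $\Delta v$ is the heat--regularization of $\Delta u$. Since $g$ is smooth on $M\setminus\Sigma$ and $v$ solves the heat equation, interior parabolic regularity gives that $v$ is smooth on $M\setminus\Sigma$ (alternatively one uses the $W^{2,q}$--mollification of Lemma~\ref{l:weakbochner_boundedlap}). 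As $g$ is Ricci flat there, the Bochner formula together with the algebraic inequality $|\nabla^2 v|^2\ge\frac1n(\Delta v)^2$ gives, pointwise on $M\setminus\Sigma$,
\[
\tfrac12\,\Delta|\nabla v|^2=|\nabla^2 v|^2+\langle\nabla\Delta v,\nabla v\rangle\ge\tfrac1n(\Delta v)^2+\langle\nabla\Delta v,\nabla v\rangle .
\]

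Next I would integrate this inequality across $\Sigma$. Let $\psi_\epsilon$ be the cut-off of $\Sigma$ from Lemma~\ref{l:cut-off}, with $\psi_\epsilon\equiv1$ on $M\setminus B_\epsilon(\Sigma)$, $\psi_\epsilon$ vanishing near $\Sigma$, and $\int_M|\nabla\psi_\epsilon|^{p/(p-1)}d\mu_g\to0$. For a nonnegative $\varphi_j\in C_0^1(M)$ the function $\psi_\epsilon\varphi_j$ is Lipschitz, compactly supported and vanishes near $\Sigma$, so multiplying by $\psi_\epsilon\varphi_j$ and integrating by parts on $M\setminus\Sigma$ (where $v$ is smooth, with no boundary contributions) yields
\[
-\tfrac12\int_M\psi_\epsilon\langle\nabla\varphi_j,\nabla|\nabla v|^2\rangle d\mu_g-\tfrac12\int_M\varphi_j\langle\nabla\psi_\epsilon,\nabla|\nabla v|^2\rangle d\mu_g\ge\int_M\psi_\epsilon\varphi_j\Big(\tfrac1n(\Delta v)^2+\langle\nabla\Delta v,\nabla v\rangle\Big)d\mu_g .
\]
The term containing $\nabla\psi_\epsilon$ is bounded by $C\|\varphi_j\|_{L^\infty}\|\nabla v\|_{L^\infty}\|\nabla^2 v\|_{L^p}\|\nabla\psi_\epsilon\|_{L^{p/(p-1)}}$, which tends to $0$ as $\epsilon\to0$ since $v\in W^{2,p}$ and $|\nabla v|\in L^\infty$; for $p=\infty$ one pairs $|\nabla\psi_\epsilon|$ with $\nabla^2 v\in L^q$ ($q<\infty$), as in the $p=\infty$ part of Lemma~\ref{lm4.1}. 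Letting $\epsilon\to0$, then $j\to\infty$ with $\varphi_j\to\varphi$ in $W^{1,2}(M)$ (using also that $|\nabla v|,|\nabla^2 v|,|\Delta v|,|\nabla\Delta v|\in L^2(M)$, from the $W^{2,p}$--regularity of $v$ and the heat--kernel decay in Proposition~\ref{p:heatkernel}), and using the identity $\int_M\Delta\varphi\,|\nabla v|^2 d\mu_g=-\int_M\langle\nabla\varphi,\nabla|\nabla v|^2\rangle d\mu_g$ (valid because $\varphi\in D(\Delta)$, $|\nabla v|^2\in W^{1,2}(M)$ and $(M,g)$ is complete), we obtain
\[
\tfrac12\int_M\Delta\varphi\,|\nabla u_t|^2 d\mu_g-\int_M\varphi\,\langle\nabla\Delta u_t,\nabla u_t\rangle d\mu_g\ge\tfrac1n\int_M\varphi\,(\Delta u_t)^2 d\mu_g .
\]

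Finally I would let $t\to0$. By Lemma~\ref{l:w12approximateheatkernel} and the strong continuity of the heat semigroup on $W^{1,2}(M)$ (note $u,\Delta u\in W^{1,2}$), one has $u_t\to u$ and $\Delta u_t\to\Delta u$ in $W^{1,2}(M)$; in particular $\nabla u_t\to\nabla u$, $\Delta u_t\to\Delta u$ and $\nabla\Delta u_t\to\nabla\Delta u$ in $L^2(M)$. Since $\varphi,\Delta\varphi\in L^\infty(M)$, it follows that $\int_M\Delta\varphi\,|\nabla u_t|^2\to\int_M\Delta\varphi\,|\nabla u|^2$, $\int_M\varphi\,(\Delta u_t)^2\to\int_M\varphi\,(\Delta u)^2$, and $\int_M\varphi\,\langle\nabla\Delta u_t,\nabla u_t\rangle\to\int_M\varphi\,\langle\nabla\Delta u,\nabla u\rangle$. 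Passing to the limit in the last display gives \eqref{e:weakbochnoerlowerbound}.

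The main difficulty is the second step, extending the pointwise Bochner inequality on $M\setminus\Sigma$ to an integral inequality across $\Sigma$: this is exactly where the hypothesis on $\cH^{n-p/(p-1)}(\Sigma)$ (resp.\ $\cH^{n-1}(\Sigma)=0$) is used, via the cut-off functions $\psi_\epsilon$ with $\int_M|\nabla\psi_\epsilon|^{p/(p-1)}d\mu_g\to0$, and one must verify that each error term produced by $\psi_\epsilon$ is of a form absorbed by the available integrability of $v=u_t$ --- which is precisely why the $W^{2,p}$-- and $L^\infty$--gradient estimates of Lemma~\ref{l:w12approximateheatkernel} were established. The case $p=\infty$, where $\nabla^2 v$ need not be bounded, requires a more delicate choice of exponents at this step.
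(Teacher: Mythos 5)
Your overall strategy --- Bochner on $M\setminus\Sigma$ applied to the heat--regularization $u_t$, insertion of the singular-set cutoff $\psi_\epsilon$, and then passing to the limits $\epsilon\to0$, $t\to0$, with a separate device to handle non-compactness --- is the same as the paper's. For $n\le p<\infty$ the argument you sketch closes: on the support of the spatial cutoff, $\|\nabla^2 u_t\|_{L^p}$ is finite, $|\nabla u_t|\in L^\infty$, and pairing against $\|\nabla\psi_\epsilon\|_{L^{p/(p-1)}}\to0$ kills the error term. (The paper works with $\varphi_R=\varphi\eta_R$ where you instead approximate $\varphi$ by $\varphi_j\in C^1_0$ in $W^{1,2}$; both devices work, though the global $L^2$-integrability you invoke in the $j\to\infty$ step should be justified via $u,\Delta u\in W^{1,2}$ and the $L^2\to L^2$ smoothing of the heat semigroup --- Proposition~\ref{p:heatkernel} by itself only bounds $\rho_t$ and $\nabla_x\rho_t$.)

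There is, however, a genuine gap for $p=\infty$, which you flag but do not resolve. The dangerous error term after integration by parts is of the form
\[
\int_M \varphi_j\,\langle \nabla\psi_\epsilon, \nabla|\nabla u_t|^2\rangle\,d\mu_g,\qquad |\nabla|\nabla u_t|^2|\le 2|\nabla u_t||\nabla^2 u_t|.
\]
You propose to pair $|\nabla\psi_\epsilon|$ against $\nabla^2 u_t\in L^q$ with $q<\infty$, which requires $\|\nabla\psi_\epsilon\|_{L^{q/(q-1)}}\to0$ for some exponent strictly bigger than $1$. But the hypothesis for $p=\infty$ is only $\cH^{n-1}(\Sigma)=0$, and Lemma~\ref{l:cut-off}(2) then gives only $\|\nabla\psi_\epsilon\|_{L^1}\to0$; the condition $\cH^{n-1}(\Sigma)=0$ does not imply $\cH^{n-q'}(\Sigma)<\infty$ for any $q'>1$, so Lemma~\ref{l:cut-off}(3) is not available. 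In the $p=\infty$ part of Lemma~\ref{lm4.1} the trick worked because $|\tilde\nabla g|\in L^\infty$, so that only $\|\nabla\eta_\epsilon\|_{L^1}$ was needed; here the analogous role is played by $|\nabla^2 u_t|$, which is in $L^q$ for every finite $q$ but not in $L^\infty$. This is precisely why the paper introduces, for each fixed $t$, a \emph{smooth} approximation $h_{i,t}\to u_t$ in $W^{2,q}$: since $h_{i,t}$ is smooth its Hessian is bounded on the compact support of the test function, so the $\psi_\epsilon$-error is $O(\|\nabla\psi_\epsilon\|_{L^1})\to0$, and one sends $i\to\infty$ only after the $\psi_\epsilon$-term has already been removed (the surviving integrals require only $W^{2,q}$-convergence with $q<\infty$). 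Without this intermediate smoothing, your argument does not close in the $p=\infty$ case.
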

\begin{proof}
Let $\varphi$ be a nonnegative bounded test function, $\varphi\in D(\Delta)$ with $|\Delta \varphi|\in L^\infty$. By Proposition \ref{p:gradientdestimate_Deltabound}, we have that $|\nabla \varphi|$ is bounded. For any given $R>0$, assume $\eta_R$ is a smooth cut-off function satisfying $\eta_R\equiv 1$ on $B_R(\Sigma)$  and $\eta_R\equiv 0$ outside $B_{2R}(\Sigma)$. Let $\varphi_R=\varphi \eta_R$. Since the metric $g$ is smooth away from $\Sigma$ and $|\nabla \eta_R|$ vanishes on $B_R(\Sigma)$, then one can check directly that $\varphi_R\in W^{1,2}$ and $\Delta \varphi_R$ is bounded.  Let us now first prove \eqref{e:weakbochnoerlowerbound} for $\varphi_R$. Then letting $R\to \infty$, we will get the desired formula by noting $\Delta \varphi_R\to \Delta \varphi$ in $L^\infty$ sense and $\varphi_R\to \varphi$ in $L^\infty$ sense in any compact subset.

	Let $u_t(x)=\int_Mu(y)\rho_t(x,y)dy$. By Lemma \ref{l:w12approximateheatkernel} we have $u_t\in W^{2,q}_{loc}$ for any $q<p$. For any fixed $t>0$, assume $h_{i,t}$ is smooth and converges in $W^{2,q}$-sense to $u_t$ as $i\to \infty$. In particular, $|\nabla h_{i,t}|\in L^\infty$ and $|\nabla^2h_{i,t}|\in L^p$ for any $n\le p\le \infty$. Here we use $h_{i,t} $ approximating $u_t$ again just because $|\nabla^2u_t|$ may not in $L^p$ when $p=\infty$. For $n\le p<\infty$, one can only use $u_t$ in the following argument but not $h_{i,t}$.

 By Bochner formula, we have on $M\setminus \Sigma$ that 
	\begin{align}\label{e:bochner_ut}
		\frac{1}{2}\Delta |\nabla h_{i,t}|^2&=|\nabla^2h_{i,t}|^2+\langle \nabla \Delta h_{i,t},\nabla h_{i,t}\rangle +\Ric(\nabla h_{i,t},\nabla h_{i,t})\notag\\
		&=|\nabla^2h_{i,t}|^2+\langle \nabla \Delta h_{i,t},\nabla h_{i,t}\rangle\notag\\
		&\ge \frac{1}{n}(\Delta h_{i,t})^2+\langle \nabla \Delta h_{i,t},\nabla h_{i,t}\rangle.
	\end{align}
	For any $\epsilon>0$, assume $\psi_\epsilon$ is a cut-off function of $\Sigma$ from Lemma \ref{l:cut-off} satisfying $\psi_\epsilon\equiv 1$ on $M\setminus B_\epsilon(\Sigma)$ and $\psi_\epsilon$ vanishes in a neighborhood of $\Sigma$, and $\lim_{\epsilon\to 0}\int_M|\nabla \psi_\epsilon|^qd\mu_g=0$ with $q=\frac{p}{p-1}$. Multiplying $\varphi_R\psi_\epsilon$ to both sides of \eqref{e:bochner_ut} and integrating by parts, we get 
	\begin{align}\label{e:bochnerutvarphiRepsilon}
		-\frac{1}{2}\int_M\langle \nabla (\varphi_R\psi_\epsilon),\nabla |\nabla h_{i,t}|^2\rangle d\mu_g &\ge \frac{1}{n}\int_M(\varphi_R\psi_\epsilon)(\Delta h_{i,t})^2 d\mu_g+\int_M \left(-(\Delta h_{i,t})^2(\varphi_R\psi_\epsilon)-\Delta h_{i,t}\langle \nabla h_{i,t},\nabla (\varphi_R\psi_\epsilon)\rangle \right) d\mu_g\\
		&:=I+II
	\end{align}
	Let us consider the left hand side. 
	\begin{align}
		\int_M\langle \nabla (\varphi_R\psi_\epsilon),\nabla |\nabla h_{i,t}|^2\rangle d\mu_g&=\int_M \psi_\epsilon \langle\nabla \varphi_R,\nabla |\nabla h_{i,t}|^2\rangle d\mu_g +\int_M\varphi_R\langle \nabla \psi_\epsilon,\nabla |\nabla h_{i,t}|^2\rangle  d\mu_g\\
		&=-\int_M\psi_\epsilon \Delta \varphi_R |\nabla h_{i,t}|^2 d\mu_g-\int_M|\nabla h_{i,t}|^2\langle \nabla \psi_\epsilon,\nabla \varphi_R\rangle d\mu_g+\int_M\varphi_R\langle \nabla \psi_\epsilon,\nabla |\nabla h_{i,t}|^2\rangle d\mu_g. 
	\end{align}
	Noting that $h_{i,t}$ is smooth and $|\nabla h_{i,t}|, |\nabla \varphi_R|$ are bounded and $|\nabla^2h_{i,t}|\in L^p$, letting $\epsilon\to 0$, the last two terms converge to zero. Thus we get 
	\begin{align}
		\lim_{\epsilon\to 0}\int_M\langle \nabla (\varphi_R\psi_\epsilon),\nabla |\nabla h_{i,t}|^2\rangle d\mu_g=-\int_M \Delta \varphi_R |\nabla h_{i,t}|^2d\mu_g
	\end{align}
In \eqref{e:bochnerutvarphiRepsilon}, when $\epsilon\to 0$, the term $I$ converges to $\frac{1}{n}\int_M\varphi_R(\Delta h_{i,t})^2$. Estimating similarly as above, letting $\epsilon\to 0$, the term $II$ would converge to $\int_M \left(-(\Delta h_{i,t})^2\varphi_R-\Delta h_{i,t}\langle \nabla h_{i,t},\nabla \varphi_R\rangle \right) d\mu_g$. Therefore, we arrive at 
\begin{align}
	\frac{1}{2}\int_M \Delta \varphi_R |\nabla h_{i,t}|^2 d\mu_g\ge \frac{1}{n}\int_M\varphi_R(\Delta h_{i,t})^2 d\mu_g+\int_M \left(-(\Delta h_{i,t})^2\varphi_R-\Delta h_{i,t}\langle \nabla h_{i,t},\nabla \varphi_R\rangle \right)d\mu_g.
\end{align}
Since $h_{i,t}\to u_t$ in $W^{2,q}$-sense for some $5/2\le q<n$, letting $i\to \infty$, we get 
	\begin{align}
	\frac{1}{2}\int_M \Delta \varphi_R |\nabla u_{t}|^2 d\mu_g\ge \frac{1}{n}\int_M\varphi_R(\Delta u_{t})^2d\mu_g+\int_M \left(-(\Delta u_{t})^2\varphi_R-\Delta u_{t}\langle \nabla u_{t},\nabla \varphi_R\rangle \right)d\mu_g.
\end{align}
	Letting $t\to 0$, by the approximating Lemma \ref{l:w12approximateheatkernel} we get 

	\begin{align}
		\frac{1}{2}\int_M \Delta \varphi_R |\nabla u|^2 d\mu_g&\ge \frac{1}{n}\int_M\varphi_R(\Delta u)^2 d\mu_g+\int_M \left(-(\Delta u)^2\varphi_R  -\Delta u\langle \nabla u,\nabla \varphi_R\rangle \right) d\mu_g\\
		&=\frac{1}{n}\int_M\varphi_R(\Delta u)^2 d\mu_g+\int_M \varphi_R\langle \nabla \Delta u,\nabla u\rangle d\mu_g,
	\end{align}
	where the last equality follows from the definition of Dirichlet Laplacian. Letting $R\to \infty$, we finish the proof. 
	\end{proof}

Now we are ready to prove Theorem \ref{t:RCDnonnegative}. 
\begin{proof}[Proof of Theorem \ref{t:RCDnonnegative} ]
	Noting the Bochner inequality proved in Proposition \ref{p:weakbochnerinequaliyt} and Theorem 6 of \cite{EKS15}(see also \cite{AMS}), $(M^n,g)$ with Lebesgue measure is an RCD space with nonnegative Ricci curvature, see also \cite{BKMR}. 
\end{proof}

Now we can finish the proof of Theorem \ref{thm1.2}.
\begin{proof}[Proof of rigidity part of Theorem \ref{thm1.2}]
Now the rigidity part of Theorem \ref{thm1.2} follows immediately from Theorem \ref{t:RCDnonnegative} and the volume stability theorem of RCD space (\cite{LoVi,Stru06}, see also Theorem 1.6 of \cite{DeGi}). Actually, by Lemma \ref{lm6.6} and Theorem \ref{t:RCDnonnegative}, the manifold $(M^n,g)$ has nonnegative Ricci curvature in RCD sense.

take any point $x\in M$, noting that the manifold is asymptotically flat, we have 
\begin{align}
	\lim_{R\to \infty}\frac{\Vol(B_R(x))}{\omega_n R^n}=1,
\end{align}
By volume comparison of RCD space with nonnegative Ricci curvature, we have 
\begin{align}
	\Vol(B_R(x))=\omega_n  R^n.
\end{align}
By volume rigidity Theorem 1.6 of \cite{DeGi} (or Corollary 1.7 of \cite{DeGi}), we get $B_R(x)$ is isometric to $B_R(0^n)\subset \mathbb{R}^n$. This implies $M$ is isometric to $\mathbb{R}^n$. 
\end{proof}

\appendix

\section{ Cut-off functions}
In this appendix, we will prove the following cut-off function lemma. The result is well know to the experts.  For the sake of convenience, we will give a proof here following the argument from Cheeger \cite{Cheeger}.
\begin{lemma}\label{l:cut-off}
	Let $(M^n,h)$ be a smooth manifold. Assume $\Sigma\subset M$ is a closed subset. Then there exists a sequence of cut-off function $\varphi_\epsilon$ of $\Sigma$ such that the following holds:
	\begin{itemize}
		\item[(1)] $0\le \varphi_\epsilon\le 1$ and $\varphi\equiv 0$ in a neighborhood of $\Sigma$ and $\varphi_\epsilon \equiv 1 $ on $M\setminus B_{\epsilon}(\Sigma)$.
		\item[(2)] If $\cH^{n-1}(\Sigma)=0$, then $\lim_{\epsilon\to 0}\int_M|\nabla \varphi_\epsilon|(x)dx=0.$
		\item[(3)] If $\cH^{n-p}(\Sigma)<\infty$ with $p>1$, then  $\lim_{\epsilon\to 0}\int_M|\nabla \varphi_\epsilon|^{p}(x)dx=0.$
	\end{itemize}
\end{lemma}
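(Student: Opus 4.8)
The plan is to construct $\varphi_\epsilon$ explicitly from an efficient covering of $\Sigma$, the two cases differing only in how many scales one must use. Throughout I may assume $\Sigma$ is compact (the statement is local, and in the applications $\Sigma$ is bounded; otherwise one works inside a fixed compact exhaustion of $M$), and I use that $h$ is smooth, so that geodesic balls of radius $\le r_0$ have volume comparable to $r^n$ and carry the expected smooth bump functions. Also I may assume $n-p>0$: if $n-p\le 0$ the hypothesis forces $\Sigma$ to be finite (or empty), a case handled by putting a logarithmic cut-off at each point.

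\emph{Case $\mathcal H^{n-1}(\Sigma)=0$.} A single scale suffices. Given $\epsilon>0$, choose a finite cover $\Sigma\subset\bigcup_{j=1}^{N}B_{r_j}(x_j)$ with $x_j\in\Sigma$, $r_j<\epsilon$ and $\sum_{j=1}^{N}r_j^{\,n-1}<\epsilon$; this is possible because $\mathcal H^{n-1}(\Sigma)=0$ and $\Sigma$ is compact. Take smooth bumps $\chi_j$ with $\chi_j\equiv1$ on $B_{r_j}(x_j)$, $\chi_j\equiv0$ off $B_{2r_j}(x_j)$ and $|\nabla\chi_j|\le Cr_j^{-1}$, and set $\varphi_\epsilon:=\prod_{j=1}^{N}(1-\chi_j)$. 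Then $0\le\varphi_\epsilon\le1$, $\varphi_\epsilon\equiv0$ on the open set $\bigcup_j B_{r_j}(x_j)\supset\Sigma$, $\varphi_\epsilon\equiv1$ off $\bigcup_j B_{2r_j}(x_j)\subset B_{2\epsilon}(\Sigma)$, and since $|\nabla\varphi_\epsilon|\le\sum_j|\nabla\chi_j|$,
\[
\int_M|\nabla\varphi_\epsilon|\,dx\le\sum_{j=1}^{N}\int_{B_{2r_j}(x_j)}|\nabla\chi_j|\,dx\le C\sum_{j=1}^{N}r_j^{\,n-1}<C\epsilon .
\]
Relabelling $\epsilon$ gives (1) and (2). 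Here the triangle inequality for the $L^1$-norm is an identity, so no control of overlaps is needed.

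\emph{Case $\mathcal H^{n-p}(\Sigma)<\infty$, $p>1$.} Write $H:=\mathcal H^{n-p}(\Sigma)$. This is the substantive case; it amounts to the classical fact that a set of finite $(n-p)$-dimensional Hausdorff measure has zero $p$-capacity, and the required cut-offs are built by a layered (logarithmic) construction following Cheeger, because a single scale is genuinely insufficient: any cover of $\Sigma$ by balls of radius $<\epsilon$ has $\sum_i r_i^{\,n-p}\ge\mathcal H^{n-p}_\epsilon(\Sigma)$, which does not tend to $0$, so a single-scale cut-off only yields $\int_M|\nabla\varphi_\epsilon|^{p}\,dx\lesssim H$. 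The scheme: fix a dyadic $j_0=j_0(\epsilon)$ with $2^{-j_0}<\epsilon$, take (using $\mathcal H^{n-p}_{2^{-j_0}}(\Sigma)\le H$) a finite cover $\Sigma\subset\bigcup_i B_{r_i}(x_i)$ with $x_i\in\Sigma$, $r_i<2^{-j_0}$ and $\sum_i r_i^{\,n-p}\le H+1$; sort the balls into dyadic scales $r_i\asymp 2^{-k}$, and within each scale replace the centres by a maximal $2^{-k}$-separated subnet $\{y_{k,l}\}_l$, so that the scale-$k$ bumps have overlap bounded by a dimensional constant while the scale-$k$ weight $\tilde\lambda_k:=\#\{l\}\cdot 2^{-k(n-p)}$ obeys $\tilde\lambda_k\le C\sum_{r_i\asymp2^{-k}}r_i^{\,n-p}$, hence $\sum_k\tilde\lambda_k\le C(H+1)$. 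Assembling smooth bumps $\chi_{k,l}$ (equal to $1$ on $B_{2^{-k}}(y_{k,l})$, vanishing off $B_{2\cdot2^{-k}}(y_{k,l})$, with $|\nabla\chi_{k,l}|\le C2^{k}$) into a product over all $(k,l)$ produces a function with the correct support properties — it vanishes on the open neighbourhood $\bigcup_{k,l}B_{2^{-k}}(y_{k,l})$ of $\Sigma$ and equals $1$ off $B_\epsilon(\Sigma)$ after relabelling — and with $L^p$ gradient energy controlled scale-by-scale by $\sum_k\tilde\lambda_k\le C(H+1)$. The extra gain turning boundedness into smallness comes from a convexity/averaging argument: one averages $M$ such constructions over $M$ widely separated blocks of dyadic scales, each block still giving a function identically $1$ near $\Sigma$ and so the average equals $1$ on the (finite) intersection of the neighbourhoods; this costs a factor $M$ but saves $M^{-p}$, hence saves $M^{-(p-1)}\to 0$, which is exactly where $p>1$ is used. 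Letting $M\to\infty$ and then $\epsilon\to0$ gives the sequence; and $\varphi_\epsilon$ may be taken smooth since away from $\Sigma$ only finitely many bumps overlap while near $\Sigma$ it vanishes identically.

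\emph{Main obstacle.} The elementary single-scale estimate only gives $\int_M|\nabla\varphi_\epsilon|^{p}\,dx\le C(H+1)$, which does not tend to $0$; the real content is the multi-scale bookkeeping needed to extract genuine decay for an arbitrary, possibly very irregular closed set $\Sigma$ carrying only a finiteness bound on $\mathcal H^{n-p}$ — sorting an efficient cover into scales so the contributions are $L^p$-summable, controlling overlaps within a scale (separated nets) and, for the averaging device, across well-separated blocks of scales, and verifying that the averaged function still vanishes in a full neighbourhood of $\Sigma$ and is $1$ outside $B_\epsilon(\Sigma)$. This is the step one imports from Cheeger, and it is where the condition $p>1$ is essential. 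Everything else — the bump estimates, the passage from $C^1$ to $W^{1,2}$ test functions needed in the applications, and the relabellings of $\epsilon$ — is routine.
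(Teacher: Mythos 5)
Your construction is essentially the same as the paper's, both in the $\mathcal H^{n-1}(\Sigma)=0$ case (a single efficient cover with $\sum r_i^{n-1}$ small, then $|\nabla\varphi_\epsilon|\le\sum|\nabla\chi_i|$) and in the $p>1$ case, where the paper likewise builds cut-offs $\psi_\alpha$ from covers at rapidly decreasing scales $\delta_\alpha\le\min_i r_{i,\alpha-1}/10$ and averages $\phi_k=2^{-k}\sum_{\alpha=2^k+1}^{2^{k+1}}\psi_\alpha$, gaining the factor $2^{-k(p-1)}$ precisely from the disjointness of the gradient supports — the Cheeger-type averaging you describe. One small slip in your write-up: in the averaging discussion each block's cut-off is identically $0$ (not $1$) near $\Sigma$, which is what makes the average itself vanish on a neighbourhood of $\Sigma$.
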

\begin{proof}
	If $\cH^{n-1}(\Sigma)=0$, by definition, for any $\delta>0$ there exists a covering $\{B_{r_i}(x_i),x_i\in \Sigma\}_{i=1}^{N_\delta}$ of $\Sigma$ such that $r_i\le \delta$ and $\sum_{i=1}^{N_\delta} r_i^{n-1}\le \delta$. Define smooth $\eta_i$ such that 
	\begin{itemize}
		\item[(a)] $\eta_i\equiv 0$ in $B_{r_i}(x_i)$ and $\eta_i\equiv 1$ in $M\setminus B_{2r_i}(x_i)$.
		\item[(b)] $|\nabla \eta_i|\le 10 r_i^{-1}$.  
	\end{itemize}  
	Let us now define $\psi_\delta(x)=\min_{i=1,\cdots,N_\delta}\eta_i(x)$. Then $\psi_\delta\equiv 0$ in a neighborhood of $\Sigma$ and $\psi_{\delta}\equiv 1$ on $M\setminus \cup_{i}B_{2r_i}(x_i)$. Noting that $r_i\le \delta$ and $x_i\in \Sigma$, we have $\psi_{\delta}\equiv 1$ on $M\setminus B_{2\delta}(\Sigma)$. By definition, we have $|\nabla\psi_\delta(p)|\le \Sigma_{i=1}^{N_\delta}|\nabla \eta_i(p)|$ for any point $p$ such that $\nabla\psi_\delta(p)$ exists. Since $\psi_\delta$ is Lipschitz, we have
	\begin{align*}
		 \int_M|\nabla \psi_\delta|(x)dx&\leq\Sigma_{i=1}^{N_\delta} \int_M|\nabla \eta_i|(x)dx \\
		&\leq\Sigma_{i=1}^{N_\delta}C(n)r_i^{n-1}\\
		&\leq C(n)\delta. 
	\end{align*}
Thus we have
	 \begin{align*}
		\lim_{\delta\to 0}\int_M|\nabla \psi_\delta|(x)dx=0. 
	\end{align*}
Hence $\varphi_\epsilon=\psi_\delta$ with $\delta\le \epsilon/10$ satisfies the lemma. \\

If $\cH^{n-p}(\Sigma)\le A<\infty$ with $p>1$, by definition, there exists a covering $\{B_{r_{i,1}}(x_{i,1}),x_{i,1}\in \Sigma,~~r_{i,1}\le \delta\}_{i=1}^{N_{\delta,1}}$ of $\Sigma$ such that 
\begin{align}
	\sum_{i=1}^{N_{\delta,1}}r_{i,1}^{n-p}\le 2A.
\end{align}
Denote $\delta_2=\min\{2^{-1}, ~\min\{r_{i,1},i=1,\cdots, N_{\delta,1}\}/10\} >0$. There exists a covering $\{B_{r_{i,2}}(x_{i,2}),x_{i,2}\in \Sigma,~~r_{i,2}\le \delta_2\}_{i=1}^{N_{\delta_2,2}}$ of $\Sigma$ such that 
\begin{align}
	\sum_{i=1}^{N_{\delta_2,2}}r_{i,2}^{n-p}\le 2A.
\end{align}
Inductively, for any integer $\alpha\ge 2$  there exists 
a covering $\{B_{r_{i,2}}(x_{i,\alpha}),x_{i,\alpha}\in \Sigma,~~r_{i,\alpha}\le \delta_\alpha\}_{i=1}^{N_{\delta_\alpha,\alpha}}$ of $\Sigma$ such that 
\begin{align}
	\sum_{i=1}^{N_{\delta_\alpha,\alpha}}r_{i,\alpha}^{n-p}\le 2A,
\end{align}
and $\delta_\alpha\le \min\{\alpha^{-1}, ~\min\{r_{i,{\alpha-1}},i=1,\cdots, N_{\delta_{\alpha-1},\alpha-1}\}/10\}>0$. Let us now construct the desired cut-off function. For each $B_{r_{i,\alpha}}(x_{i,\alpha})$ we define $\eta_{i,\alpha}$ such that $\eta_{i,\alpha}\equiv 0$ in $B_{r_{i,\alpha}}(x_{i,\alpha})$ and $\eta_{i,\alpha}\equiv 1$ in $M\setminus B_{2r_{i,\alpha}}(x_{i,\alpha})$, moreover $|\nabla \eta_{i,\alpha}|\le 10 r_{i,\alpha}^{-1}$.  Define $\psi_\alpha(x)=\min\{\eta_{i,\alpha}(x)~~|~~i=1,\cdots, N_{\delta_{\alpha},\alpha}\}$.  Define for any integer $k$ 
\begin{align}
	\phi_k=\frac{1}{2^k}\sum_{\alpha=2^{k}+1}^{2^{k+1}} \psi_\alpha(x). 
\end{align}
Then $\varphi_\epsilon=\phi_k$ satisfies the lemma when $\epsilon^{-1}\le k< \epsilon^{-1}+1$. 
\end{proof} 

\section{Gradient estimates}
If the metric has bounded $W^{1,p}$-norm with $p>n$, by elliptic estimate we can get the following gradient estimate. The idea of this estimate is the same as gradient estimate on K\"ahler manifolds with $L^p$ Ricci potential, see \cite{TiZh}.
\begin{lemma}\label{l:gradient_W1pmetric}
	Let $(M^n,g)$ be smooth and $\Delta u=f$ on $B_r(x)$ with $u,f$ smooth. Then 
	\begin{align}
		\sup_{B_{r/2}(x)}|\nabla u|^2\le C(n,g)\left(\fint_{B_r(x)}|\nabla u|^2d\mu_g+r^2\sup_{B_r(x)}|f|^2\right),
	\end{align}
	where $C(n,g)$ depends only on the $W^{1,p}$-norm of $g$ on $B_r(x)$ for given $p>n$.
\end{lemma}
\begin{proof}
Since the estimate is scaling invariant and noting that the estimate is local, without loss of generality let us assume $r=1$ and $B_1(x)$ is in a coordinate neighborhood.  We denote $v=   \sqrt{|\nabla u|^2+s}$, $s>0$. By Bochner formula, we have 
\begin{align*}
\Delta v^2=\Delta |\nabla u|^2&=2|\nabla ^2u|^2+2\langle \nabla \Delta u,\nabla u\rangle+2\Ric(\nabla u,\nabla u)\\
&\ge 2|\nabla ^2u|^2+2\langle \nabla \Delta u,\nabla u\rangle+2\Ric(\nabla u,\nabla u).
\end{align*}
Let $\varphi$ be a function in $C^\infty_0(M)$, we have
\begin{align*}
\varphi \Delta v\ge \frac{\varphi}{2v}\Delta v^2\ge \varphi v^{-1}|\nabla ^2u|^2+ \varphi v^{-1}\langle \nabla \Delta u,\nabla u\rangle+\varphi v^{-1}\Ric(\nabla u,\nabla u).
\end{align*}
In a coordinate neighborhood, and we can write $\Ric=\partial^2 g+\partial g*\partial g$. Thus by integration by part, we have
\begin{align*}
-\int_{B_1(x)}\langle \nabla\varphi,\nabla v\rangle d\mu_g
&\ge -\int_{B_1(x)}\varphi v^{-1}(\Delta u)^2d\mu_g +\int_{B_1(x)}\varphi v^{-2}\Delta u \langle\nabla u,\nabla v\rangle d\mu_g-\int_{B_1(x)}v^{-1}\Delta u\langle\nabla u,\nabla \varphi\rangle d\mu_g\\
&+ \int_{B_1(x)}\varphi v^{-1}\partial g*\partial g*\nabla u*\nabla ud\mu_g+ \int_{B_1(x)}\varphi v^{-1}\partial^2 g*\nabla u*\nabla ud\mu_g+ \int_{B_1(x)}\varphi v^{-1}|\nabla ^2u|^2d\mu_g.
\end{align*}
Thus we have
\begin{align*}
\int_{B_1(x)}\langle \nabla\varphi,\nabla v\rangle d\mu_g
&\le \int_{B_1(x)}\varphi v^{-1}(\Delta u)^2d\mu_g -\int_{B_1(x)}\varphi v^{-2}\Delta u \langle\nabla u,\nabla v\rangle d\mu_g+\int_{B_1(x)}v^{-1}\Delta u\langle\nabla u,\nabla \varphi\rangle d\mu_g\\
&-\int_{B_1(x)}\varphi v^{-1}\partial g*\partial g*\nabla u*\nabla ud\mu_g+ \int_{B_1(x)}\varphi v^{-1}|\nabla ^2u|\partial g*\nabla u d\mu_g-\int_{B_1(x)}\varphi v^{-1}|\nabla ^2u|^2d\mu_g\\
& +\int_{B_1(x)}\varphi v^{-2}\partial g*\nabla v*\nabla u*\nabla ud\mu_g+\int_{B_1(x)}v^{-1}\partial g*\nabla \varphi*\nabla u*\nabla ud\mu_g.
\end{align*}

Since 
\[\int_{B_1(x)}\varphi v^{-1}|\nabla ^2u|\partial g*\nabla u d\mu_g\le \frac{1}{2}\int_{B_1(x)}\varphi v^{-1}|\nabla ^2u|^2d\mu_g+C(n,g)\int_{B_1(x)}\varphi v^{-1}|\partial g|^2 |\nabla u|^2d\mu_g,\]
we have 
\begin{align*}
\int_{B_1(x)}\langle \nabla\varphi,\nabla v\rangle d\mu_g
&\le \int_{B_1(x)}\varphi v^{-1}(\Delta u)^2d\mu_g -\int_{B_1(x)}\varphi v^{-2}\Delta u \langle\nabla u,\nabla v\rangle d\mu_g+\int_{B_1(x)}v^{-1}\Delta u\langle\nabla u,\nabla \varphi\rangle d\mu_g\\
&-\int_{B_1(x)}\varphi v^{-1}\partial g*\partial g*\nabla u*\nabla ud\mu_g+ C(n,g)\int_{B_1(x)}\varphi v^{-1}|\partial g|^2 |\nabla u|^2d\mu_g\\
& +\int_{B_1(x)}\varphi v^{-2}\partial g*\nabla v*\nabla u*\nabla ud\mu_g+\int_{B_1(x)}v^{-1}\partial g*\nabla \varphi*\nabla u*\nabla ud\mu_g,
\end{align*}
here and below $C(n,g)$ will denote a positive constant depends only on the $W^{1,p}$-norm of $g$, and can vary from line to line.

If $\Delta u\equiv 0$, we will let $s\to 0^{+}$ in the following arguement.

If $\Delta u\not\equiv 0$, we denote $D=\sup_{B_1(x)}|\Delta u|$, and we let $s=D^2$. Since $D\le v$ and $|\nabla u|\le v$, we have

\begin{align*}
\int_{B_1(x)}\langle \nabla\varphi,\nabla v\rangle d\mu_g
&\le\int_{B_1(x)}\varphi v d\mu_g+\int_{B_1(x)}\varphi| \nabla v | d\mu_g+\int_{B_1(x)}v|\nabla \varphi|d\mu_g\\
&+C(n,g)\int_M \varphi v|\partial g|^2d\mu_g+C(n,g)\int_{B_1(x)}\varphi v|\partial g|d\mu_g\\
&+C(n,g)\int_{B_1(x)}\varphi|\nabla v||\partial g|d\mu_g+C(n,g)\int_{B_1(x)}|\partial g||\nabla \varphi|d\mu_g.
\end{align*}

By choosing test function $\varphi= \eta^2 v^{2q-1}$, where $q\ge 1$ and $\eta $ is some cut-off function in $C^\infty_0(B_1(x))$, we can argue completely as 
 the proof Proposition \ref{p:gradientdestimate_Deltabound} to get the desired estimate:
\begin{align}
\sup_{B_\frac{r}{2}(x)} |\nabla u|^2
\le C(n,g) \left(\fint_{B_r(x)}|\nabla u|^2d\mu_g+r^2\sup_{B_r(x)} |f|^2\right),
\end{align}
where $C(n,g)$ depends only on the $W^{1,p}$-norm of $g$ on $B_r(x)$ for given $p>n$. This completes the proof.
\end{proof}

\bibliographystyle{plain}

\end{document}